\@date \else {\vskip3ex \centering\footnotesize\@date\par\vskip1ex}\fi
\else \@footnotetext{\@setdate}\fi}
\newcommand{\R}{{\mathbb R}}
\newcommand{\N}{{\mathbb N}}
\newcommand{\cA}{{\mathcal A}}
\newcommand{\cB}{{\mathcal B}}
\newcommand{\cD}{{\mathcal D}}
\newcommand{\cP}{{\mathcal P}}
\newcommand{\cX}{{\mathcal X}}
\newcommand{\sD}{{\mathscr D}}
\newcommand{\ld}{\lambda}
\newcommand{\ve}{\varepsilon}
\newcommand{\al}{\alpha}
\newcommand{\be}{\beta}
\newcommand{\p}{\partial}
\newcommand{\vp}{\varphi}
\newcommand{\supp}{\operatorname{supp}}
\newcommand{\loc}{\operatorname{loc}}
\newcommand{\D}{\nabla}
\newcommand{\La}{\Delta}
\newcommand{\Div}{\operatorname{div}}
\newtheorem{thm}{Theorem}[section]
\newtheorem{lemma}[thm]{Lemma}
\newtheorem{cor}[thm]{Corollary}
\newtheorem{remark}[thm]{Remark}
\newtheorem{prop}[thm]{Proposition}
\theoremstyle{definition}
\begin{document}
\title[Asymptotic large time behavior of singular solutions]{Asymptotic large time behavior of singular solutions of the fast diffusion equation}

\author[Kin Ming Hui]{Kin Ming Hui}
\address{Kin Ming Hui: 
Institute of Mathematics, Academia Sinica Taipei, Taiwan, R. O. C.}
\email{kmhui@gate.sinica.edu.tw}

\author[Soojung Kim]{Soojung Kim}
\address{Soojung Kim: 
Institute of Mathematics, Academia Sinica Taipei, Taiwan, R. O. C.}
\email{soojung26@gmail.com; soojung26@math.sinica.edu.tw}
\date{\today}
\keywords{existence, large time behavior, fast diffusion equation, singular solution, self-similar solution}
\subjclass{Primary 35B35, 35B44, 35K55, 35K65}

\begin{abstract} 
We study the asymptotic large time behavior of singular solutions of the fast diffusion equation $u_t=\Delta u^m$  in $({\mathbb R}^n\setminus\{0\})\times(0,\infty)$ in the subcritical case $0<m<\frac{n-2}{n}$, $n\ge3$. Firstly, we prove the existence of singular solution $u$ of the above equation that is trapped in between self-similar solutions of the form of $t^{-\alpha} f_i(t^{-\beta}x)$, $i=1,2$, with  initial value $u_0$ satisfying $A_1|x|^{-\gamma}\le u_0\le A_2|x|^{-\gamma}$ for some constants $A_2>A_1>0$ and $\frac{2}{1-m}<\gamma<\frac{n-2}{m}$, where $\beta:=\frac{1}{2-\gamma(1-m)}$, $\alpha:=\frac{2\be-1}{1-m},$ and the self-similar profile $f_i$ satisfies the elliptic equation 
$$
\Delta f^m+\alpha f+\beta x\cdot \nabla f=0\quad \mbox{in ${\mathbb R}^n\setminus\{0\}$}
$$
with $\lim_{|x|\to0}|x|^{\frac{ \alpha}{ \beta}}f_i(x)=A_i$ and  $\lim_{|x|\to\infty}|x|^{\frac{n-2}{m}}{f_i}(x)= D_{A_i}  $ for some constants $D_{A_i}>0$. When $\frac{2}{1-m}<\gamma<n$, under  an integrability condition on the initial value $u_0$ of the singular solution $u$, we prove that the rescaled function 
$$ 
\tilde u(y,\tau):= t^{\,\alpha} u(t^{\,\beta} y,t),\quad{  \tau:=\log t},
$$
converges to some self-similar profile $f$ as $\tau\to\infty$. 
\end{abstract}
 
\maketitle
\tableofcontents 

\section{Introduction}

\setcounter{equation}{0}
\setcounter{thm}{0}

We study solutions of the Cauchy problem of the fast diffusion equation 
\begin{equation}\label{eq-fde}
u_t=\La u^m
\end{equation}  
in $(\R^n\setminus\{0\})\times (0,\infty)$, which blow up at the origin $x=0$ for all time, in the  subcritical case   $0<m <\frac{n-2}{n}$, $n\geq3$. The equation \eqref{eq-fde}  is the well-known heat equation for $m = 1$, porous medium equation for $m > 1,$ and fast diffusion equation for $ 0 < m < 1$, respectively, that model diffusive processes of heat flows and gas flows in various media \cite{A}, \cite{DK}, \cite{V2}. When $m=\frac{n-2}{n+2}, $ $n\geq3,$ the equation \eqref{eq-fde}  also  arises in the study of the Yamabe flow  
equation
\begin{equation}\label{eq-Yamabe}
\frac{\partial g}{\partial t}=-Rg 
\end{equation}
on $\R^n$ where $R$ is the scalar curvature of the metric $g(x,t)$ at time $t$ \cite{DKS}, \cite{DS2}, \cite{PS}, \cite{Y}. 
In fact the metric $g=u^{\frac{4}{n+2}}dx^2$ on an open set $\Omega\subset\R^n$, $n\ge 3$, evolves by the Yamabe flow  \eqref{eq-Yamabe}  for $0<t<T$ if and only if $u$ is a solution of 
$$
u_t=\frac{n-1}{m}\La u^m \quad\mbox{ in }\Omega\times (0,T)\quad\mbox{ with }m=\frac{n-2}{n+2}. 
$$
There is an extensive literature  on  the existence, uniqueness, regularity  and asymptotic behavior of  solutions of \eqref{eq-fde} in the case $m\geq1$ and in the supercritical case $\frac{n-2}{n}<m<1$. In the subcritical case $0<m\leq\frac{n-2}{n}$, the properties of the solutions of \eqref{eq-fde} are quite different \cite{V1} and have been extensively  studied  in recent years by P. Daskalopoulos, J. King, M. del Pino, N. Sesum, M. S\'aez, \cite{DKS,DPS, DS1, DS2,PS}, S.Y. Hsu \cite{Hs1,Hs2,Hs3}, K.M. Hui  \cite{Hui1,Hui2,Hui3}, M. Fila, J.L.  Vazquez, M. Winkler, E. Yanagida \cite{FVWY,FW}, A. Blanchet, M. Bonforte, J. Dolbeault, G. Grillo, J.L. Vazquez \cite{BBDGV, BDGV}, etc. We also refer the readers to the survey paper \cite{A} 
and the books \cite{DK}, \cite{V2} 
on the recent results on \eqref{eq-fde}.  
  
In this paper we are concerned with solutions of \eqref{eq-fde} in $\left(\R^n\setminus\{0\}\right)\times (0,\infty)$ which blow up at the origin $x=0$ for all time in the subcritical case $0<m<\frac{n-2}{n}$, $n\geq 3$. More precisely, we will prove global existence of solution $u$  of    the fast diffusion equation 
\begin{equation}\label{eq-fde-global-except-0} 
\left\{\begin{aligned}
u_t=\La u^m\quad&\mbox{in }(\R^n\setminus\{0\})\times(0,\infty)\\
u(\cdot,0)=u_0\quad&\mbox{in }\R^n\setminus\{0\}\end{aligned}\right.
\end{equation}
which blows up at the origin $x=0$ for all time with initial value $u_0$ satisfying the growth condition
\begin{equation}\label{eq-fde-initial}
A_1|x|^{-\gamma}\leq u_0(x)\leq A_2|x|^{-\gamma}\quad\mbox{in $\R^n\setminus\{0\}$} 
\end{equation}
for some constants $A_2>A_1>0$ and $\frac{2}{1-m}< \gamma < \frac{n-2}{m}$ where  $n\geq3$ and  $0<m<\frac{n-2}{n}$. 

We will adapt the method  in  \cite{DS1, DKS,Hs1}, which {uses} integrability of the solution near the origin, to study the asymptotic large time behavior of the solution of \eqref{eq-fde-global-except-0} when  $\frac{2}{1-m}< \gamma < n $. In this case the solution $u$ of \eqref{eq-fde-global-except-0}  with  initial value  $u_0$ satisfying \eqref{eq-fde-initial}
is also a weak solution to the Cauchy problem for the fast diffusion equation
\begin{equation}\label{eq-fde-global}
\left\{
\begin{aligned}
u_t=\La u^m\quad&\mbox{in $\R^n\times(0,\infty)$}\\
u(\cdot,0)=u_0\quad&\mbox{in $\R^n$}.\end{aligned}
\right.
\end{equation} 
 
The study of  existence and  large time asymptotics of solutions  of \eqref{eq-fde-global-except-0}   satisfying \eqref{eq-fde-initial}  relies  on the study of the  self-similar solutions of \eqref{eq-fde-global-except-0}    which have initial value of the form $A|x|^{-\gamma}$ for some constants $A>0$ and $\frac{2}{1-m}<\gamma<\frac{n-2}{m}$. For any  $\frac{2}{1-m}<\gamma<\frac{n-2}{m}$, we consider a radially symmetric self-similar solution of  \eqref{eq-fde} of the form
\begin{equation*}
{U(x,t):=t^{-\alpha} f(t^{-\beta}x),\quad (x,t)\in  \R^n\ \times(0,\infty)}
\end{equation*} 
where 
\begin{equation}\label{eq-def-alpha-beta-gamma}
\beta:=\frac{1}{2-\gamma(1-m)}\quad \mbox{ and }\quad\al:=\frac{2\be-1}{1-m}.
\end{equation} 
Then $(m-1)\al+2\be=1$, $\al=\be \gamma$, and $U(x,t)$ is a solution of \eqref{eq-fde-global-except-0} with initial value 
$U_0(x)=A|x|^{-\gamma}$ if and only if $f$ is a radially symmetric solution of
\begin{equation}\label{eq-fde-scaled}
\La f^m+\alpha f+\beta x\cdot \D f=0,\quad f>0
\end{equation}
in $\R^n\setminus\{0\}$ with 
\begin{equation}\label{eq-sol-behavior-origin}
\lim_{|x|\to 0}|x|^{\frac{ \alpha}{ \beta}}f(x)=A
\end{equation}
where we recall that $ \gamma=\frac{\al}{\be}$. Note that since $ \gamma >\frac{2}{1-m}$, $\al<0$ and $\be<0$.     
Since the asymptotic large time behavior of solution of \eqref{eq-fde-global-except-0} is {usually} similar to the self-similar solution of
\eqref{eq-fde} we will first prove the following result in our paper. 
  
\begin{thm}[Existence of self-similar profile] \label{thm-existence-self-similar}
Let $n\geq3$, $0< m < \frac{n-2}{n}$, 
\begin{equation}\label{eq-alpha-beta-neg-condition}
\beta<0,\quad\rho_1>0,\quad\alpha:=\frac{2\beta-\rho_1}{1-m}\quad\mbox{ and }\quad\frac{2}{1-m}<\frac{\alpha}{\beta}<\frac{n-2}{m}.
\end{equation}
For any $A>0,$ there exists a unique radially symmetric solution $f$ of \eqref{eq-fde-scaled} in $\R^n\setminus\{0\}$, which satisfies  \eqref{eq-sol-behavior-origin} and 
\begin{equation}\label{eq-sol-behavior-infty}
\lim_{|x|\to\infty}|x|^{\frac{n-2}{m}}f(x)= D_A  
\end{equation}
for some constant $D_A>0$  depending on $A$. Moreover, 
\begin{equation}\label{eq-hess-f^m}
\La f^m=-\left(\alpha f+ \beta x\cdot \D f\right) < 0 \quad\mbox{ in $\R^n\setminus\{0\}$}. 
\end{equation}
\end{thm}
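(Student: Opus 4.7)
The strategy is a shooting argument for the radial ODE that \eqref{eq-fde-scaled} reduces to. Restricting to radial $f=f(r)$ gives
$$(r^{n-1}(f^m)')' + r^{n-1}\bigl(\alpha f + \beta r f'\bigr) = 0, \quad r>0,$$
and the key algebraic facts, both consequences of $\alpha=\beta\gamma$, are the identities $\alpha f + \beta r f' = \beta r^{1-\gamma}(r^\gamma f)'$ and $\alpha f + \beta r f' = (\alpha-n\beta)f + \beta r^{1-n}(r^n f)'$. Integrating the ODE (from $0$ to $r$ when $\gamma<n$, or from $\infty$ to $r$ otherwise) yields the integro-differential identity
\begin{equation}\label{plan-intid}
r^{n-1}(f^m)'(r) + \beta r^n f(r) + (\alpha-n\beta)\int_0^r s^{n-1}f(s)\,ds = 0,
\end{equation}
whose boundary contributions at $r=0^+$ vanish for $f\sim Ar^{-\gamma}$ under the hypothesis $\gamma<(n-2)/m$. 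Identity \eqref{plan-intid} will drive both the global estimates and the asymptotic analysis.

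For a fixed $A>0$, local existence near the origin would be obtained by writing $f(r)=Ar^{-\gamma}\bigl(1+w(r)\bigr)$, translating the ODE into a fixed-point equation for $w$ on $(0,r_0]$, and applying the contraction mapping principle in a weighted sup-norm of the form $\sup_{0<r\le r_0} r^{-\delta}|w(r)|$ for some small $\delta>0$. This step gives a unique local radial solution with $\lim_{r\to 0}r^\gamma f(r)=A$ and in particular establishes the uniqueness assertion. Global extension to all $r>0$ is routine: \eqref{plan-intid} provides a priori bounds, while comparison with pure-power sub-/supersolutions of \eqref{eq-fde-scaled} of the form $cr^{-\gamma'}$ with $\gamma\le\gamma'<(n-2)/m$ prevents $f$ from touching $0$ or $\infty$ at any finite $r>0$. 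The sign statement \eqref{eq-hess-f^m} is equivalent to $(r^\gamma f)'(r)<0$ for all $r>0$, since $\beta<0$ and $\alpha f+\beta r f'=\beta r^{1-\gamma}(r^\gamma f)'$; I would obtain this strict monotonicity by a strong maximum principle argument applied to the second-order ODE satisfied by $g(r):=r^\gamma f(r)$, combined with the boundary data $g(0^+)=A$ and $g(\infty)=0$.

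The main obstacle is proving $\lim_{r\to\infty}r^{(n-2)/m}f(r)=D_A\in(0,\infty)$. Since $m<(n-2)/n$ implies $(n-2)/m>n$, a first upper bound of the form $f(r)\le Cr^{-(n-2)/m}$ on $\{r\ge R\}$ forces $r^n f(r)\to 0$ and makes $\int_0^\infty s^{n-1}f(s)\,ds$ convergent. Such an upper bound can be produced by comparison, on a half-line $\{r\ge R\}$, against a pure-power supersolution of \eqref{eq-fde-scaled} matching $f$ at $r=R$. Passing $r\to\infty$ in \eqref{plan-intid} then yields
$$\lim_{r\to\infty}r^{n-1}(f^m)'(r) = -(\alpha-n\beta)\int_0^\infty s^{n-1}f(s)\,ds =: -L,$$
and a second integration delivers $f^m(r)\sim\frac{L}{n-2}r^{2-n}$, whence $D_A=(L/(n-2))^{1/m}$. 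The most delicate point is the strict positivity $L>0$: one must rule out super-harmonic decay of $f^m$. My plan here is to exhibit, for sufficiently small $\varepsilon>0$, a positive subsolution $\varepsilon r^{-(n-2)/m}$ of \eqref{eq-fde-scaled} on $\{r\ge R\}$ that lies below $f$ at $r=R$; comparison then propagates the bound $f(r)\ge \varepsilon r^{-(n-2)/m}$ to all $r\ge R$, forcing $D_A\ge \varepsilon>0$.
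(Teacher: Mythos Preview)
Your approach is a direct attack on the singular ODE for $f$ at $r=0$, whereas the paper proceeds by the inversion $g(r):=r^{-(n-2)/m}f(r^{-1})$, which transforms the problem into an ODE for $g$ with \emph{regular} initial data $g(0)=\eta>0$. Local and global existence for $g$ then follow from a contraction mapping near $r=0$ together with a first-order differential inequality for $h_1:=g+\tilde k\,r g'$ (Lemma~2.1), which simultaneously yields $g'<0$ and $h_1>0$; the decay rate of $g$ at infinity is extracted by an integrating-factor and l'H\^opital argument (Proposition~2.5). No comparison with barrier functions and no maximum principle is used anywhere. The inversion buys a regular initial-value problem where these ODE techniques work cleanly; the price is the somewhat unusual equation~(2.2) for $g$.

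Your plan has a genuine gap. You invoke elliptic comparison with pure-power barriers three times---for global extension, for the upper bound $f\le Cr^{-(n-2)/m}$, and for the crucial lower bound $f\ge\varepsilon r^{-(n-2)/m}$ securing $D_A>0$---but the operator $\mathcal L[f]=\Delta f^m+\alpha f+\beta x\cdot\nabla f$ does not admit a standard comparison principle. Linearizing $\mathcal L$ in $p=f^m$ between two profiles produces a zeroth-order coefficient containing a term proportional to $x\cdot\nabla q$ times a solution-dependent factor, whose sign is uncontrolled at a would-be interior touching point; the argument does not close. Thus, although $\varepsilon r^{-(n-2)/m}$ indeed satisfies $\mathcal L[\cdot]>0$, you cannot propagate $f\ge\varepsilon r^{-(n-2)/m}$ from $r=R$ by comparison alone. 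There is also a circularity in the outline: your monotonicity argument for $(r^\gamma f)'<0$ uses the boundary datum $g(\infty)=0$, which already requires the upper bound at infinity; and for $\gamma\ge n$ (which is allowed, since $(n-2)/m>n$) your integral identity integrated from $\infty$ presupposes the very asymptotic you are trying to prove. The paper's differential-inequality method for $h_1$ sidesteps both problems because it uses only the one-sided regular data at the origin for $g$.
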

We will prove Theorem \ref{thm-existence-self-similar} in section \ref{sec-inversion} using an inversion method which {transforms} the above problem into an equivalent existence problem of the related inversion elliptic equation. Note that a heuristic proof of the existence of solution of \eqref{eq-fde-scaled} in $\R^n\setminus\{0\}$ satisfying  \eqref{eq-sol-behavior-origin} for the case $\frac{2}{1-m}<\frac{\alpha}{\beta}<n$ using phase-plane analysis is given in Chapter 5 of \cite{V2}. 

We will let $n\geq3$ and $0< m < \frac{n-2}{n}$ for the rest of the paper.
In the case when $0<\gamma<\frac{2}{1-m}$, it was proved in \cite[Theorem 1.2]{Hs3} that a rescaled limit of the global smooth   solution $u$ of  \eqref{eq-fde-global} with initial value  $u_0(x)\approx A|x|^{-\gamma}$ as $t$ tends to infinity  is  a  
radially symmetric  self-similar profile $f$ which  satisfies  \eqref{eq-fde-scaled} in $\R^n$  with $\al>0$ and $\be>0$ given in \eqref{eq-def-alpha-beta-gamma}, and 
\begin{equation*}
\lim_{|x|\to\infty}|x|^{ \gamma}f(x)=A.
\end{equation*}    
In the case $\gamma=\frac{2}{1-m},$  the self-similar shrinking    Barenblatt type solution    $\cB_k$ of \eqref{eq-fde} defined by 
$$
\cB_k(x,t):=\left(\frac{C^*(T-t)}{|x|^2+k(T-t)^{2\sigma^*}}\right)^{\frac{1}{1-m}},\quad \forall (x,t)\in\R^n\times(0,T),
$$ 
where $T>0$ and $k\geq0$ are free parameters and  
$$
C^*:=\frac{2m(n-2-nm)}{1-m},\quad\mbox{and}\quad \sigma ^*:=-\frac{1}{n-2-nm},
$$
which vanishes identically at time $T$ is  well known. In particular when $k=0,$ 
$$
\cB_0(x,t)=\left(\frac{C^*(T-t)}{|x|^2 }\right)^{\frac{1}{1-m}}
$$
remains  singular at the origin for all time $t<T$ with $\cB_0(x,0)= {(C^*T)}^{\frac{1}{1-m}}|x|^{-\frac{2}{1-m}}$ and $\cB_0(x,T)\equiv0.$   
For general initial value satisfying the condition $u_0\approx A|x|^{-\frac{2}{1-m}}$ for some constant $A>0$ as $|x|\to\infty$, asymptotic behavior of the solution of \eqref{eq-fde} in $\R^n\times (0,T)$ near the extinction time $T$ has been studied in \cite{DS1, DKS, Hui3}. 

For the case $\gamma=\frac{n-2}{m}$, $A|x|^{-\frac{n-2}{m}}$ is a particular solution of \eqref{eq-fde} in $(\R^n\setminus\{0\})\times (0,\infty)$. 

\begin{remark}\label{self-similar-soln-scaling-property-rmk}
Let $\alpha$, $\beta$ and $\rho_1$ satisfy \eqref{eq-alpha-beta-neg-condition} and let $f_1$ be the radially symmetric solution  of \eqref{eq-fde-scaled} in $\R^n\setminus\{0\}$ which satisfies \eqref{eq-sol-behavior-origin}  and \eqref{eq-sol-behavior-infty} with $A=1$ for some constant $D_1>0$ given by Theorem \ref{thm-existence-self-similar}. 
For any $\ld>0$, we define   
\begin{equation}\label{eq-def-scaled-f1-lambda}
f_{\ld}(x):= \ld^{\frac{2}{1-m}} f_1(\ld x).
\end{equation}
Then $f_\ld$  satisfies \eqref{eq-fde-scaled} in $\R^n\setminus\{0\}$ and  
\begin{equation}\label{eq-behavior-f1_lambda}
\left\{\begin{aligned}
\lim_{|x|\to0} |x|^{\frac{\alpha}{\beta}}f_\ld(x)&= \lim_{|x|\to0} \ld^{\frac{2}{1-m}-\frac{\alpha}{\beta}}(\ld |x|)^{\frac{\alpha}{\beta}}f_1(\ld x)=\ld^{\frac{2}{1-m}-\frac{\alpha}{\beta}},\\
\lim_{|x|\to\infty} |x|^{\frac{n-2}{m}}f_\ld(x)&= \lim_{|x|\to\infty} \ld^{\frac{2}{1-m}-\frac{n-2}{m}}(\ld |x|)^{\frac{n-2}{m}}f_1(\ld x)=\ld^{\frac{2}{1-m}-\frac{n-2}{m}}D_1 . 
\end{aligned}\right.
\end{equation} 
By the uniqueness result of Theorem \ref{thm-existence-self-similar} and the scaling property above, the solution $f$ of \eqref{eq-fde-scaled} in $\R^n\setminus\{0\}$ which satisfies \eqref{eq-sol-behavior-origin} {and \eqref{eq-sol-behavior-infty} for  given constants  $A>0$ and $D_A>0$} coincides with the rescaled function $f_\ld$  given by  \eqref{eq-def-scaled-f1-lambda} with $\ld=A^{1/(\frac{2}{1-m}-\frac{\alpha}{\beta})}$ and 
\begin{equation*} 
D_A=D_1A^{(\frac{2}{1-m}-\frac{n-2}{m})/(\frac{2}{1-m}-\frac{\alpha}{\beta})}.
\end{equation*}  
 Observed   by Remark  \ref{rmk-scaling-monotonicity} in section \ref{sec-self-similar profiles} for any $0\ne x\in\R^n$, $f_\ld(x)$ is a monotone decreasing function of $\ld>0.$ 
\end{remark}

Let $\frac{2}{1-m}<\gamma<\frac{n-2}{m}$, $\rho_1=1$, and  $\al$, $\be$  be given by \eqref{eq-def-alpha-beta-gamma}. Then the self-similar profile $f_\ld$ given by \eqref{eq-def-scaled-f1-lambda} yields a self-similar solution     
\begin{equation}\label{eq-def-U-lambda}
U_\ld(x,t):=t^{-\alpha}f_\ld(t^{-\beta}x)\quad\forall (x,t)\in \left(\R^n\setminus\{0\}\right)\times(0,\infty)
\end{equation} 
of \eqref{eq-fde-global-except-0} with initial value $U_{\ld, 0}(x)=\ld^{\frac{2}{1-m}-\gamma} |x|^{-\gamma}$ since $\al=\be\gamma$ and 
$$
\lim_{t\to0} U_\ld(x,t)=\lim_{|y|=t^{-\beta}|x|\to0} |x|^{-\gamma}|y|^{\gamma}f_\ld(y)=\ld^{\frac{2}{1-m}-\gamma}|x|^{-\gamma}\quad\forall x\not=0.
$$
When $\frac{2}{1-m}<\gamma < n ,$  by \eqref{eq-behavior-f1_lambda} $U_\ld\in C\left( [0,\infty); L^1_{\loc}(\R^n)\right)\cap C\left( (0,\infty); L^1(\R^n)\right)$ is a weak solution of  \eqref{eq-fde-global} with initial value $ U_{\ld,0}(x)=\ld^{\frac{2}{1-m}-\gamma}|x|^{-\gamma}\in L^1_{\loc}(\R^n).$ 
 
When $\frac{2}{1-m} <\gamma< \frac{n-2}{m}$, we will prove the existence  of  solution of \eqref{eq-fde-global-except-0} trapped in between two  self-similar  solutions $U_{\ld_i},  i=1,2$, $\lambda_1>\lambda_2>0$, of the form  \eqref{eq-def-U-lambda} with initial value $u_0$ satisfying 
\begin{equation}\label{eq-initial-trapped-particular-sols-existence}
A_1|x|^{- \gamma} \leq u_0(x)\leq  A_2|x|^{-\gamma},\quad\forall x\in\R^n\setminus\{0\}
\end{equation} 
where $A_i=\ld_i^{\frac{2}{1-m} -\gamma} $, $i=1,2$. We will also establish a weighted $L^1$-contraction theorem for such solutions. Since $|x|^{-\gamma}$ is not integrable in $\R^n$,  the  difference of any two initial values $u_0,v_0$, that satisfy 
\eqref{eq-initial-trapped-particular-sols-existence}  may not be integrable  in $\R^n$. So we need to introduce a weighted $L^1$-space in order to study the asymptotic large time behavior of the solution of \eqref{eq-fde-global-except-0} with initial value $u_0$ satisfying 
\eqref{eq-initial-trapped-particular-sols-existence}.

For any $\mu>0,$ we define the weighted $L^1$-space with weight $|x|^{-\mu}$    by 
\begin{equation*}
L^1(r^{-\mu}; \R^n):=\left\{ h: \int_{\R^n } |h(x)| |x|^{-\mu} dx<\infty\right\}
\end{equation*}
with  norm 
$$
\|h\|_{L^1(r^{-\mu}; \R^n)}=\int_{\R^n } |h(x)| |x|^{-\mu} dx.
$$  
Let us fix some constants that will be used later. Let 
\begin{equation}\label{eq-weighted-L1-contraction-mu}
\mu_1:=\max\left(0, \,n-\frac{\al}{\be}\right)\quad\mbox{and}\quad 
\mu_2:= n-2-\frac{m\al}{\be} . 
\end{equation}
Unless stated otherwise we will now assume that $\frac{2}{1-m}<\frac{\alpha}{\beta}<\frac{n-2}{m}$ for the rest of the paper. Then $0\le \mu_1<\mu_2<n-2$.

\begin{thm}[Weighted $L^1$-contraction]\label{thm-weighted-L1-contraction-new}
Let $n\geq3,$ $0<m <\frac{n-2}{n}$, and $\frac{2}{1-m}<\gamma<\frac{n-2}{m}$. Let $u$ and $v$ be solutions of \eqref{eq-fde-global-except-0} which satisfy 
\begin{equation}\label{eq-trapped-particular-sols-new}
U_{\ld_1}\leq u,v \leq U_{\ld_2}\quad\mbox{in $\left(\R^n\setminus\{0\}\right)\times(0,\infty)$}
\end{equation}
where $U_{\ld_i}$, $i=1,2$,  are given by \eqref{eq-def-U-lambda} with $\al$ and $\be$ given by \eqref{eq-def-alpha-beta-gamma} and  
$\ld_1>\ld_2>0 $. Assume that $|u_0 -v_0|\in L^1\left(r^{-\mu};\R^n\right)$ for some constant $\mu\in (\mu_1, \mu_2)$. Then 
\begin{equation}\label{eq-weighte-L1-contraction-new}
\int_{\R^n} |u -v|(x,t) |x|^{-\mu}\,dx\leq\int_{\R^n}|u_0 -v_0|(x) |x|^{-\mu}\,dx\quad\forall t>0
\end{equation}
and
\begin{equation}\label{eq-weighte-L1-contraction-positive-part-new}
\int_{\R^n} \left(u -v\right)_+(x,t)|x|^{-\mu}\,dx\leq\int_{\R^n}\left(u_0 -v_0\right)_+(x)|x|^{-\mu}\,dx\quad\forall t>0.
\end{equation}
\end{thm}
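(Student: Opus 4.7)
The plan is to apply Kato's inequality and integrate by parts twice against the weight $|x|^{-\mu}$. Setting $w:=u-v$ and subtracting the two copies of \eqref{eq-fde-global-except-0}, we get $w_t=\La(u^m-v^m)$ in $(\R^n\setminus\{0\})\times(0,\infty)$. Since $s\mapsto s^m$ is increasing, $\mathrm{sgn}(w)=\mathrm{sgn}(u^m-v^m)$, so Kato's inequality gives, in the sense of distributions,
\[
\partial_t|w|\;\le\;\mathrm{sgn}(w)\,\La(u^m-v^m)\;\le\;\La|u^m-v^m|.
\]
Multiplying by $|x|^{-\mu}$ and transferring the Laplacian to the weight produces, formally,
\[
\frac{d}{dt}\int_{\R^n}|w|\,|x|^{-\mu}\,dx\;\le\;\int_{\R^n}|u^m-v^m|\,\La(|x|^{-\mu})\,dx.
\]
A direct computation gives $\La(|x|^{-\mu})=-\mu(n-2-\mu)|x|^{-\mu-2}$, nonpositive for $0\le\mu\le n-2$. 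Since $0\le\mu_1<\mu_2<n-2$ in our regime, the right-hand side is $\le0$ for every $\mu\in(\mu_1,\mu_2)$, and integration in $t$ yields \eqref{eq-weighte-L1-contraction-new}.

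To make this rigorous I would regularize in two steps. First, replace $\mathrm{sgn}$ by a smooth nondecreasing $p_\delta\colon\R\to[-1,1]$ with $p_\delta\to\mathrm{sgn}$ and introduce a radial cut-off $\phi_\ve\in C_c^\infty(\R^n\setminus\{0\})$ equal to $1$ on $\{\ve\le|x|\le\ve^{-1}\}$ and supported in $\{\ve/2\le|x|\le 2\ve^{-1}\}$. Multiplying the equation for $w$ by $p_\delta(w)\phi_\ve|x|^{-\mu}$ and integrating by parts in $x$, the term carrying $p_\delta'(w)\ge0$ has a definite sign and is discarded; letting $\delta\downarrow0$ then gives
\[
\frac{d}{dt}\int|w|\,\phi_\ve|x|^{-\mu}\,dx\;\le\;\int|u^m-v^m|\,\La(\phi_\ve|x|^{-\mu})\,dx.
\]
The commutator $\La(\phi_\ve|x|^{-\mu})-\phi_\ve\La(|x|^{-\mu})$ is supported on $\{|x|\sim\ve\}\cup\{|x|\sim\ve^{-1}\}$. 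Using \eqref{eq-trapped-particular-sols-new} and the asymptotics of $f_{\ld_i}$ at $0$ and $\infty$ from Theorem \ref{thm-existence-self-similar}, for each fixed $t>0$ one has the pointwise bounds $|u^m-v^m|(x,t)\le C(t)|x|^{-m\al/\be}$ near $0$ and $|u^m-v^m|(x,t)\le C(t)|x|^{-(n-2)}$ near $\infty$. Hence the inner annular contribution is $O(\ve^{\mu_2-\mu})$ and the outer one is $O(\ve^{\mu})$; both vanish as $\ve\to0$ precisely when $\mu_1<\mu<\mu_2$. The left-hand side then converges to $\int|w|\,|x|^{-\mu}dx$ by monotone convergence, finite at $t=0$ by hypothesis and finite at later times by the resulting inequality.

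The analog \eqref{eq-weighte-L1-contraction-positive-part-new} is obtained by the same scheme with $p_\delta$ replaced by a smooth nondecreasing approximation of $\chi_{(0,\infty)}$ and the one-sided Kato inequality $\chi_{\{w>0\}}\La(u^m-v^m)\le\La(u^m-v^m)_+$, noting that $(u^m-v^m)_+$ satisfies the same pointwise bounds used above.

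The main technical obstacle will be the $\ve\to0$ passage, specifically the boundary contributions at $\{|x|\sim\ve\}$ arising from the second integration by parts: estimating them requires pointwise control of $\D(u^m-v^m)$ near the origin, which comes from interior parabolic regularity applied on compact annuli $\{R^{-1}\le|x|\le R\}\times[t_0,t_1]$ where the trapping \eqref{eq-trapped-particular-sols-new} furnishes uniform upper and lower bounds on $u,v$. The scaling of these estimates as $R\to0$ has to be tracked carefully against the weight $|x|^{-\mu}$, and it is precisely the strict upper bound $\mu<\mu_2$ that absorbs the worst factor $|x|^{-m\al/\be}$ and delivers the $O(\ve^{\mu_2-\mu})$ decay.
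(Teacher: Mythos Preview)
Your strategy is the same as the paper's: Kato's inequality, multiplication by a cut-off weighted test function $\phi_\ve|x|^{-\mu}$, transfer of the full Laplacian onto the test function, and the sign $\Delta|x|^{-\mu}\le 0$ for $0<\mu<n-2$. The inner annular estimate is also correct: for $|x|\sim\ve$ one has $t^{-\beta}|x|\to 0$ uniformly for $t\in[0,T]$, so $U_{\lambda_2}^m(x,t)\le C|x|^{-m\alpha/\beta}$ with a $t$-independent constant, yielding $O(\ve^{\mu_2-\mu})$ after time integration.

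The gap is in the outer annular estimate. Your bound $|u^m-v^m|(x,t)\le C(t)|x|^{-(n-2)}$ is correct for each fixed $t>0$, but the constant behaves like $C(t)\sim t^{(n-2)\beta-m\alpha}$ and is \emph{not} integrable on $(0,T)$ when $\gamma<n$, since $(n-2)\beta-m\alpha+1=n\beta-\alpha<0$ in that range. You must integrate the differential inequality down to $t=0$ \emph{before} letting $\ve\to 0$---there is no a priori control of $\int|u-v|(\cdot,t_0)|x|^{-\mu}dx$ for small $t_0$, because the crude trapping bound $U_{\lambda_2}-U_{\lambda_1}$ does not lie in $L^1(r^{-\mu})$ when $\mu>\mu_1$. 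Hence the pointwise-in-$t$ estimate is insufficient. The paper integrates in time first, changes to self-similar variables, and splits according to whether $t^{-\beta}R\le 1$ or $>1$; the outcome is $\int_0^T(\text{outer boundary})\,dt=O(R^{n-\alpha/\beta-\mu})+O(R^{-\mu}\log R)+O(R^{-\mu})$, which vanishes precisely when $\mu>\mu_1=\max(0,\,n-\alpha/\beta)$. Notice that your own estimates would only give the range $0<\mu<\mu_2$, never invoking $\mu_1$; that inconsistency is the tell that the outer bound is too crude.

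Your last paragraph is off-target: since the Laplacian is transferred entirely to the smooth test function $\phi_\ve|x|^{-\mu}$, no bound on $\nabla(u^m-v^m)$ near the origin is ever needed---only the pointwise size of $|u^m-v^m|$ itself, which the trapping \eqref{eq-trapped-particular-sols-new} already provides.
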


\begin{thm}[Existence]\label{thm-existence-sol-fde-singular}
Let  $n\geq3,$ $0< m < \frac{n-2}{n} $, and  $\frac{2}{1-m}<\gamma<\frac{n-2}{m}$. Let $u_0$  satisfy 
\eqref{eq-initial-trapped-particular-sols-existence} for some constants $A_2>A_1>0.$ Then there exists a unique solution $u$ of
\eqref{eq-fde-global-except-0} satisfying  
\begin{equation}\label{eq-existence-sols-trapped-self-similar}  
U_{\ld_1}\leq u  \leq U_{\ld_2}\quad\mbox{in $\left(\R^n\setminus\{0\}\right)\times(0,\infty)$}, 
\end{equation}
where $U_{\ld_i}$ for $i=1, 2$, are given by \eqref{eq-def-U-lambda} with $\al$, $\be$, given by \eqref{eq-def-alpha-beta-gamma}, and $\lambda_i:=A_i^{1/ (\frac{2}{1-m}-\gamma )}$  for  $i=1,2,$ respectively.  
Moreover
\begin{equation}\label{eq-AB}
u_t\leq \frac{u}{(1-m)\,t}\quad\mbox{in   $\left(\R^n\setminus\{0\}\right)\times(0,\infty)$}.
\end{equation}
\end{thm}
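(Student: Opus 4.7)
The plan is to construct $u$ as a limit of classical solutions on the annular domains $D_k:=B_k\setminus\overline{B_{1/k}}$, trapped throughout the iteration between the two self-similar solutions $U_{\ld_1}$ and $U_{\ld_2}$ supplied by Theorem \ref{thm-existence-self-similar} and Remark \ref{self-similar-soln-scaling-property-rmk}. Since $\ld_1>\ld_2>0$ and $f_\ld(x)$ is monotone decreasing in $\ld$ (Remark \ref{self-similar-soln-scaling-property-rmk}), one has $U_{\ld_1}\le U_{\ld_2}$ on $(\R^n\setminus\{0\})\times(0,\infty)$, and the choice $\ld_i=A_i^{1/(\frac{2}{1-m}-\gamma)}$ makes the initial trace of $U_{\ld_i}$ equal to $A_i|x|^{-\gamma}$; thus \eqref{eq-initial-trapped-particular-sols-existence} is equivalent to $U_{\ld_1}(\cdot,0)\le u_0\le U_{\ld_2}(\cdot,0)$. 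Existence together with \eqref{eq-existence-sols-trapped-self-similar} will follow from a compactness argument plus the comparison principle, uniqueness from the weighted $L^1$-contraction Theorem \ref{thm-weighted-L1-contraction-new}, and \eqref{eq-AB} from a linearized Aronson--B\'enilan comparison.

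To carry out the construction I would first choose smooth $u_{0,k}\in C^\infty(\overline{D_k})$ satisfying $U_{\ld_1}(\cdot,0)\le u_{0,k}\le U_{\ld_2}(\cdot,0)$ on $D_k$, $u_{0,k}\to u_0$ in $L^1_{\loc}(\R^n\setminus\{0\})$, and compatibility at $t=0$ with the boundary datum $U_{\ld_2}$ on $\partial D_k$. Standard quasilinear parabolic theory then provides a unique classical positive solution $u_k$ of
\begin{equation*}
\left\{\begin{aligned}
(u_k)_t&=\La u_k^m&&\mbox{in }D_k\times(0,\infty),\\
u_k&=U_{\ld_2}&&\mbox{on }\partial D_k\times(0,\infty),\\
u_k(\cdot,0)&=u_{0,k}&&\mbox{in }D_k,
\end{aligned}\right.
\end{equation*}
uniformly parabolic on $\overline{D_k}\times[0,T]$ since $u_k$ remains between positive constants. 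The comparison principle applied against $U_{\ld_1}$ from below and $U_{\ld_2}$ from above gives $U_{\ld_1}\le u_k\le U_{\ld_2}$ in $D_k\times(0,\infty)$ uniformly in $k$.

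These uniform two-sided barriers, combined with interior parabolic H\"older and Schauder estimates, yield uniform interior regularity for $\{u_k\}$ on every compact subset of $(\R^n\setminus\{0\})\times(0,\infty)$. Passing to a subsequential limit by Arzel\`a--Ascoli produces a classical solution $u$ of \eqref{eq-fde} satisfying \eqref{eq-existence-sols-trapped-self-similar}; the initial trace $u(\cdot,0)=u_0$ is recovered from the squeezing $U_{\ld_i}(\cdot,t)\to A_i|x|^{-\gamma}$ locally uniformly on $\R^n\setminus\{0\}$ as $t\to 0^+$, combined with $A_1|x|^{-\gamma}\le u_0\le A_2|x|^{-\gamma}$ and $u_{0,k}\to u_0$. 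For uniqueness, any two such solutions $u,\tilde u$ sharing the initial datum $u_0$ satisfy the trivial hypothesis $|u_0-u_0|=0\in L^1(r^{-\mu};\R^n)$ of Theorem \ref{thm-weighted-L1-contraction-new} for every $\mu\in(\mu_1,\mu_2)$, and \eqref{eq-weighte-L1-contraction-new} immediately forces $u\equiv\tilde u$.

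Finally for \eqref{eq-AB}, I would argue on the approximate problem. Differentiating $(u_k)_t=\La u_k^m$ in $t$ shows $(u_k)_t$ solves the linear equation $\p_t v=\La(mu_k^{m-1}v)$, and a direct calculation shows that $w_k:=t(u_k)_t-u_k/(1-m)$ also solves $\p_t w_k=\La(mu_k^{m-1}w_k)$ on $D_k\times(0,\infty)$. At $t=0$, $w_k=-u_{0,k}/(1-m)\le 0$. On the spatial boundary, writing $U_{\ld_2}(x,t)=t^{-\al}f_{\ld_2}(t^{-\be}x)$, a short computation using the self-similar ODE \eqref{eq-fde-scaled} yields the scaling identity
\begin{equation*}
t(U_{\ld_2})_t=t^{-\al}\La f_{\ld_2}^m(t^{-\be}x)<0\le \frac{U_{\ld_2}}{1-m}
\end{equation*}
by \eqref{eq-hess-f^m}, so the boundary trace of $w_k$ is $\le 0$. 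The linear parabolic maximum principle gives $w_k\le 0$ in $D_k\times(0,\infty)$, and letting $k\to\infty$ produces \eqref{eq-AB}. The main obstacle I anticipate is the attainment of the initial value $u(\cdot,0)=u_0$ in a sufficiently strong sense: since $u_0$ is only assumed to satisfy the pointwise bound \eqref{eq-initial-trapped-particular-sols-existence} with no additional regularity, upgrading the weak limit coming from compactness to genuine initial-data attainment will require combining the two-sided barrier with the monotonicity-type estimate \eqref{eq-AB} itself.
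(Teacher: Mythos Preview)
Your overall plan --- annular approximation with two-sided self-similar barriers, compactness via interior Schauder estimates, and uniqueness via Theorem \ref{thm-weighted-L1-contraction-new} --- matches the paper's, with only cosmetic differences (the paper imposes $U_{\ld_1}$ rather than $U_{\ld_2}$ on the lateral boundary and does not regularize $u_0$, citing instead an existence result on annuli with $L^1$ initial data). Your derivation of \eqref{eq-AB} via the linear equation satisfied by $w_k=t(u_k)_t-u_k/(1-m)$ is a legitimate alternative to the paper's nonlinear comparison of $v=(u_k)_t/u_k$ against $(1+\delta)/((1-m)(t-\ve_\delta))$; both hinge on $\partial_t U_{\ld_i}<0$ at the boundary, which follows from \eqref{eq-hess-f^m}.

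The genuine gap is exactly where you place it, but the remedy you propose will not close it. The squeezing $U_{\ld_1}\le u\le U_{\ld_2}$ only yields $A_1|x|^{-\gamma}\le\liminf_{t\to 0}u(x,t)\le\limsup_{t\to 0}u(x,t)\le A_2|x|^{-\gamma}$, which is compatible with \emph{any} initial datum in that range, not just $u_0$. The one-sided Aronson--B\'enilan estimate \eqref{eq-AB} rewrites as $\partial_t(t^{-1/(1-m)}u)\le 0$, i.e., $u(x,t)\le (t/s)^{1/(1-m)}u(x,s)$ for $t>s>0$; as $s\to 0$ the prefactor diverges, so no information about $u_0$ is recovered, and there is no matching lower bound. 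What is actually needed is a quantitative \emph{local $L^1$ stability} estimate for solutions of $u_t=\La u^m$: for any ball with $B_{2r}(x_0)\subset\R^n\setminus\{0\}$ contained in both approximating annuli,
\begin{equation*}
\left(\int_{B_r(x_0)}|u_{k}-u_{k'}|(x,t)\,dx\right)^{1-m}\le\left(\int_{B_{2r}(x_0)}|u_{0,k}-u_{0,k'}|\,dx\right)^{1-m}+C\,r^{n(1-m)-2}\,t.
\end{equation*}
This is the Herrero--Pierre inequality (cf.\ \cite[Lemma 3.1]{HP}), and it is precisely how the paper recovers the initial trace: send $k'\to\infty$, then $t\to 0$, then $k\to\infty$. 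It is independent of, and not a consequence of, \eqref{eq-AB}.
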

For any solution $u$ of \eqref{eq-fde} in $(\R^n\setminus\{0\})\times(0,\infty)$ and constants $\al,\be$ satisfying $\al=\frac{2\be-1}{1-m},$ we define the rescaled function $\tilde u$ by
\begin{equation}\label{def-fde-rescaled}
\tilde u(y,\tau):= t^{\,\al} u(t^{\,\beta} y,t),\quad{  \tau:=\log t}.
\end{equation}
Then $\tilde u$ satisfes
\begin{equation}\label{eq-fde-rescaled}
\tilde u_\tau=\La \tilde u^m+\alpha \tilde u+\beta y\cdot\D \tilde u
\end{equation}  
in $\left(\R^n\setminus\{0\}\right)\times (-\infty,\infty)$ in the classical sense since $(m-1)\al+2\be=1$. In particular
$$
\tilde U_\ld(y,\tau)= f_\ld(y)\quad\mbox{ for }(y,\tau)\in \left(\R^n\setminus\{0\}\right)\times (-\infty,\infty).
$$ 
Note that if $u$ is the solution of \eqref{eq-fde-global-except-0} given by Theorem \ref{thm-existence-sol-fde-singular}, then \eqref{eq-existence-sols-trapped-self-similar} implies that 
\begin{equation}\label{eq-sol-trapped-particular-sols-rescaled-0-new}
f_{\ld_1}(y) \leq \tilde u(y,\tau )\leq f_{\ld_2}(y)\quad\forall (y,\tau)\in \left(\R^n\setminus\{0\}\right)\times(-\infty,\infty).
\end{equation} 
When $\frac{2}{1-m}<\gamma<n$, we will prove the large time behavior of the solution given by Theorem \ref{thm-existence-sol-fde-singular} with initial value satisfying \eqref{eq-initial-trapped-particular-sols-existence} for some constants $A_2>A_1>0$, 
in which case, the solution belongs to $C([0, \infty); L^1_{\loc}(\R^n))$ and is a weak solution of the Cauchy problem \eqref{eq-fde-global} (Corollary \ref{cor-existence-sol-fde-singular} in section \ref{sec-asymptotic-behavior}). More precisely we have the following main result.

\begin{thm}\label{thm-fde-rescaled-asymptotic}  
Let $n\geq3,$ $0< m<\frac{n-2}{n}$, $\frac{2}{1-m}<\gamma<n$, and   let $\al$, $\be$ be given by \eqref{eq-def-alpha-beta-gamma}. Let $u_0$ satisfy \eqref{eq-initial-trapped-particular-sols-existence}  and 
\begin{equation}\label{eq-sol-initial-value-similar-to-U-ld0-new}
u_0- A_{0} |x|^{-\gamma}\in L^1\left( r^{-\mu}; \R^n\right) 
\end{equation}
for some constants $A_2\ge A_0\ge A_1>0$ and $\mu_1<\mu<\mu_2$, where $\mu_1$, $\mu_2$ are given by \eqref{eq-weighted-L1-contraction-mu}.  Let $u$ be the solution of \eqref{eq-fde-global-except-0} which satisfies \eqref{eq-existence-sols-trapped-self-similar} with $\lambda_i=A_i^{1/ (\frac{2}{1-m}-\gamma )}$  for  $i=1,2$, and let $\tilde u(y,\tau)$ be given by \eqref{def-fde-rescaled}. Then as $\tau\to\infty$, $\tilde u(y,\tau)$ will converge  uniformly on  each compact subset of $\R^n\setminus\{0\}$ and in $L^1( r^{-\mu_1};\R^n)$ to $f_{\ld_0}(y)$ where  $\ld_0=A_0^{1/(\frac{2}{1-m}-\gamma)}$. 
\end{thm}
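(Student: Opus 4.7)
My plan is to combine the weighted $L^1$-contraction of Theorem \ref{thm-weighted-L1-contraction-new} with the pointwise trapping $f_{\ld_1}\leq \tilde u\leq f_{\ld_2}$ from \eqref{eq-sol-trapped-particular-sols-rescaled-0-new} and standard parabolic regularity on compact subsets of $\R^n\setminus\{0\}$. The self-similar solution $U_{\ld_0}$ with $\ld_0=A_0^{1/(\frac{2}{1-m}-\gamma)}$ solves \eqref{eq-fde-global-except-0} with initial datum $A_0|x|^{-\gamma}$, and by \eqref{eq-sol-initial-value-similar-to-U-ld0-new} the pair $(u,U_{\ld_0})$ satisfies the assumptions of Theorem \ref{thm-weighted-L1-contraction-new} with weight $|x|^{-\mu}$, giving
$$
\int_{\R^n}|u-U_{\ld_0}|(x,t)\,|x|^{-\mu}\,dx\le M_0:=\int_{\R^n}\bigl|u_0-A_0|x|^{-\gamma}\bigr|\,|x|^{-\mu}\,dx,\qquad\forall t>0.
$$

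Passing to the rescaled variable $x=t^{\be}y$ and using $\al=\be\gamma$ together with $\tilde u(y,\tau)-f_{\ld_0}(y)=t^{\al}\bigl(u-U_{\ld_0}\bigr)(t^{\be}y,t)$, a direct Jacobian computation converts this into
$$
\int_{\R^n}|\tilde u(y,\tau)-f_{\ld_0}(y)|\,|y|^{-\mu}\,dy\le M_0\,t^{-\be(n-\mu-\gamma)}.
$$
Because $\gamma<n$ forces $\mu_1=n-\gamma$, the assumption $\mu>\mu_1$ gives $n-\mu-\gamma<0$; combined with $\be<0$ this makes $-\be(n-\mu-\gamma)<0$, so the right-hand side decays to $0$ as $\tau\to\infty$. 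Hence $\tilde u(\cdot,\tau)\to f_{\ld_0}$ in $L^1(r^{-\mu};\R^n)$ at an explicit rate.

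For convergence uniformly on compact subsets of $\R^n\setminus\{0\}$, the trapping $f_{\ld_1}\leq\tilde u\leq f_{\ld_2}$ provides strictly positive upper and lower bounds on $\tilde u$ on every compact $K\subset\R^n\setminus\{0\}$, so \eqref{eq-fde-rescaled} is uniformly parabolic on $K\times\R$. Classical Schauder-type estimates then yield uniform $C^{2,\al'}$ bounds on $K\times\R$. For any sequence $\tau_k\to\infty$, Arzel\`a--Ascoli produces a subsequence along which $\tilde u(\cdot,\cdot+\tau_k)$ converges in $C^{2,\al'}_{\loc}((\R^n\setminus\{0\})\times\R)$ to some $w$ that solves \eqref{eq-fde-rescaled} and satisfies $f_{\ld_1}\leq w\leq f_{\ld_2}$. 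Applying Fatou's lemma to the $L^1(r^{-\mu})$-estimate above forces $\int|w(y,\tau)-f_{\ld_0}(y)|\,|y|^{-\mu}\,dy=0$ for every $\tau$, so $w\equiv f_{\ld_0}$. Since every subsequence has this same limit, $\tilde u(\cdot,\tau)\to f_{\ld_0}$ uniformly on every compact subset of $\R^n\setminus\{0\}$.

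Finally, to upgrade to $L^1(r^{-\mu_1};\R^n)$, decompose $\R^n=\{|y|<\ve\}\cup\{\ve\leq|y|\leq R\}\cup\{|y|>R\}$. The middle annulus is handled by the uniform convergence. For the outer tail, $|\tilde u-f_{\ld_0}|\leq 2f_{\ld_2}$, together with the decay $f_{\ld_2}(y)\sim D_{\ld_2}|y|^{-(n-2)/m}$ as $|y|\to\infty$ from \eqref{eq-sol-behavior-infty} and the bound $\gamma<\frac{n-2}{m}$ (equivalent to $(n-2)/m+\mu_1>n$), gives
$$
\int_{|y|>R}|\tilde u-f_{\ld_0}|\,|y|^{-\mu_1}\,dy\lesssim \int_{|y|>R}|y|^{-(n-2)/m-\mu_1}\,dy\longrightarrow 0\quad\text{as }R\to\infty,
$$
uniformly in $\tau$. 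For the inner tail, for $\ve\leq1$ and $|y|\leq\ve$ one has $|y|^{-\mu_1}\leq\ve^{\mu-\mu_1}|y|^{-\mu}$, hence
$$
\int_{|y|<\ve}|\tilde u-f_{\ld_0}|\,|y|^{-\mu_1}\,dy\leq \ve^{\mu-\mu_1}\int_{\R^n}|\tilde u-f_{\ld_0}|\,|y|^{-\mu}\,dy\leq \ve^{\mu-\mu_1}M_0
$$
for all $t\geq 1$, which is uniformly small in $\tau$ once $\ve$ is small. Combining the three pieces delivers the $L^1(r^{-\mu_1};\R^n)$ convergence. The main technical obstacle is the uniform parabolic regularity used to implement the $\omega$-limit argument on arbitrary compacta of $\R^n\setminus\{0\}$; once that compactness is in place, the rest reduces to dimensional analysis of the exponent $\be(n-\mu-\gamma)$ combined with the weighted $L^1$-contraction.
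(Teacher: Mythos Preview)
Your proof is correct and takes a genuinely different, more elementary route than the paper.

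The paper's argument runs through two auxiliary results: a \emph{strict} weighted contraction principle in $L^1(r^{-\mu_1};\R^n)$ for rescaled solutions (Lemma~\ref{lem-strict-L1-contraction-new}) and an Osher--Ralston type lemma (Lemma~\ref{lem-Osher-Ralston-new}). After extracting a subsequential limit $\tilde v$ via compactness (Lemma~\ref{lem-uniform-convergence-tilde-u-compact-sets-new}), the paper shows $\|\tilde v(\cdot,\tau)-f_{\ld_0}\|_{L^1(r^{-\mu_1})}$ is constant in $\tau$ by Osher--Ralston, then invokes the strict contraction to force $\tilde v_0\equiv f_{\ld_0}$. The rescaling identity you exploit does appear in the paper, but only to control the inner tail $\{|y|<1/R\}$ in the proof of Lemma~\ref{lem-uniform-convergence-tilde-u-compact-sets-new}; the paper does not use it globally.

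Your key observation is that applying the rescaling $x=t^{\be}y$ to the full $L^1(r^{-\mu};\R^n)$ contraction of Theorem~\ref{thm-weighted-L1-contraction-new} yields directly
\[
\|\tilde u(\cdot,\tau)-f_{\ld_0}\|_{L^1(r^{-\mu};\R^n)}\le M_0\,e^{\,\be(\mu-\mu_1)\tau},
\]
and since $\be(\mu-\mu_1)<0$ this already gives convergence with an explicit exponential rate. This makes the strict contraction and Osher--Ralston lemmas unnecessary: the subsequential limit $w$ is identified as $f_{\ld_0}$ by Fatou alone. Your three-region decomposition for the $L^1(r^{-\mu_1})$ upgrade is essentially the same as the paper's tail estimates \eqref{eq-rescaled-convergene-split-outside-anulus-new}--\eqref{eq-rescaled-convergene-split-outside-before-scale-new}. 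What your approach buys is both simplicity and a quantitative rate; what the paper's machinery (Lemmas~\ref{lem-strict-L1-contraction-new}--\ref{lem-Osher-Ralston-new}) would buy is a framework that could in principle handle situations where no such explicit rate is available, though here it is not needed.
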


We end the introduction by stating some definitions and notations that will be used in the paper.
    
\begin{itemize}
\item For any $0\leq u_0\in L^1_{\loc}(\R^n\setminus\{0\}),$  we say that $u$ is a solution of \eqref{eq-fde-global-except-0}  
if $u>0$ in $(\R^n\setminus\{0\})\times(0, \infty)$ satisfies \eqref{eq-fde} in $ \left(\R^n\setminus\{0\}\right)\times(0, \infty)$ in the classical sense and 
\begin{equation}\label{eq-u-initial-value}
\| u(\cdot, t)-u_0\|_{L^1(K)}\to 0\quad\mbox{ as }t\to 0
\end{equation} 
for any compact set $K\subset \R^n\setminus\{0\}.$ 
 
\item
For any $0\leq u_0\in L^1_{\loc}(\R^n),$ we say that $u$ is a weak solution of \eqref{eq-fde-global}   if $ 0 \leq  u \in C\left([0, \infty); L^1_{\loc}(\R^n)\right)$ satisfies \eqref{eq-fde} in $  \R^n\times(0,\infty)$ in the distributional sense  
and \eqref{eq-u-initial-value} holds for any compact set $K\subset \R^n.$ 

\item
For any $x_0\in\R^n,$ and $R>0,$ we let $B_{R}(x_0)=\{x\in\R^n: |x-x_0|<R\}$ and $B_R=B_R(0).$ We also let $\cA_R= B_{R}\setminus \overline B_{1/R}$ for any $R>1$. 
\end{itemize}
The rest of the paper is organized as follows. In section \ref{sec-inversion}, we will study the inversion elliptic problem associated with  the solution of \eqref{eq-fde-scaled} which satisfies \eqref{eq-sol-behavior-origin} and  \eqref{eq-sol-behavior-infty} for some constants $A>0$ and $D_A>0$. Section \ref{sec-self-similar profiles} is devoted to the proof of Theorem \ref{thm-existence-self-similar}. In section \ref{sec-asymptotic-behavior} we will prove Theorem \ref{thm-weighted-L1-contraction-new}, Theorem \ref{thm-existence-sol-fde-singular} and Theorem \ref{thm-fde-rescaled-asymptotic}.    
 
\section{Inversion elliptic problem for self-similar profiles} \label{sec-inversion}
\setcounter{equation}{0}
\setcounter{thm}{0}

In order to study the existence of singular self-similar solutions
of \eqref{eq-fde}, we introduce an inversion formula for the solution of \eqref{eq-fde-scaled} which satisfies \eqref{eq-sol-behavior-origin} and \eqref{eq-sol-behavior-infty} for some constants $A>0$ and $D_A>0$. We first note that $f$ is a radially symmetric solution of \eqref{eq-fde-scaled} in $\R^n\setminus\{0\},$   
if and only if the function
\begin{equation}\label{eq-def-g-from-f}
g(r):=r^{-\frac{n-2}{m}}f(r^{-1}),\quad r=|x|>0,
\end{equation}   satisfies 
\begin{equation}\label{eq-inversion}
\La g^m+|x|^{\frac{n-2-nm}{m}-2}\left(\tilde\alpha g+\tilde\beta x\cdot \D g \right)=0,\quad g>0 
\end{equation}
in $\R^n\setminus\{0\}$ with
\begin{equation}\label{eq-tilde-alpha-beta}
\tilde\beta=-\beta,\quad\mbox{and}\quad\tilde\alpha=\alpha-\frac{n-2}{m}\,\beta.
\end{equation}
In this case the condition \eqref{eq-sol-behavior-origin} is equivalent to 
\begin{equation}\label{eq-fde-scaled-at-origin-inv}
\lim_{|x|\to\infty}|x|^{\frac{\tilde\alpha}{\tilde\beta}}g(x)=A.
\end{equation}  
Note that if \eqref{eq-alpha-beta-neg-condition} holds, then  
\begin{equation}\label{eq-tilde-alpha-beta-0}
\tilde\alpha>0,\quad\tilde\beta>0,\quad \frac{\tilde\alpha}{\tilde\beta}=-\frac{\alpha}{\beta}+\frac{n-2}{m}\in \left(0,\frac{n-2}{m}\right),
\end{equation}
and
\begin{equation}\label{eq-tilde-alpha-beta-1}
0<\frac{\tilde\alpha}{\tilde\beta}<\frac{n-2-nm}{m(1-m) }\quad \Leftrightarrow\quad  \frac{2}{1-m}<\frac{\alpha}{\beta}  <\frac{n-2}{m}\,.
\end{equation}

Hence existence of a radially symmetric  solution $f$ of \eqref{eq-fde-scaled}  in $\R^n\setminus\{0\}$  satisfying  \eqref{eq-sol-behavior-origin} and  \eqref{eq-sol-behavior-infty}  is equivalent  to the existence of a radially symmetric solution $g$ of \eqref{eq-inversion} in $\R^n\setminus\{0\}$ satisfying \eqref{eq-fde-scaled-at-origin-inv} and $g(0)=D_A$.   
In this section we will prove the  existence of a radially symmetric  solution $g$ to \eqref{eq-inversion} in $\R^n\setminus\{0\}$ satisfying \eqref{eq-fde-scaled-at-origin-inv} when \eqref{eq-alpha-beta-neg-condition} holds. 
 
\begin{lemma}\label{lem-v-derivative}
Let $n\geq3,$ $0<m\leq\frac{n-2}{n} $, $\tilde \alpha>0$, $\tilde\beta\not=0$ and $\tilde\alpha/\tilde\beta\leq \frac{n-2}{m}.$ 
For any $\eta>0$ and $R_0>0,$ let $g\in C([0,R_0);\R)\cap C^2((0,R_0);\R)$ be a solution to 
\begin{equation}\label{eq-fde-inversion}
(g^m)''+\frac{n-1}{r}(g^m)' +r^{\frac{n-2-nm}{m}-2}(\tilde\alpha g+\tilde\beta r g_{r})=0,\quad g>0
\end{equation}
in $(0,R_0)$ which satisfies 
\begin{equation}\label{eq-initial}
g(0)=\eta\quad\mbox{and }\quad  \lim_{r\to0+}r g_r(r)=0.
\end{equation} 
Let $\tilde k:=\tilde\beta/\tilde\alpha$. Then
\begin{equation*}
\left\{\begin{aligned}
g(r)+\tilde kr g'(r)>0\quad\forall r\in (0,R_0)\\
g'(r)<0\quad\quad\forall r\in (0,R_0).
\end{aligned}\right.
\end{equation*}
\end{lemma}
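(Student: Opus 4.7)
My plan is to introduce the auxiliary function $\phi(r) := g(r) + \tilde k\,r\,g'(r)$ with $\tilde k := \tilde\beta/\tilde\alpha$, so that $\tilde\alpha g+\tilde\beta r g'=\tilde\alpha\,\phi$ and, multiplying \eqref{eq-fde-inversion} by $r^{n-1}$, the equation takes the conservation form
\begin{equation*}
\bigl(r^{n-1}(g^m)'(r)\bigr)' \,=\, -\tilde\alpha\, r^{(n-2-3m)/m}\,\phi(r).
\end{equation*}
The conditions $g(0)=\eta$ and $rg_r(r)\to 0$ give $\phi(0^+)=\eta>0$ and $r^{n-1}(g^m)'(r)\to 0$ as $r\to 0^+$, so $\phi>0$ on some $(0,\delta)$, and integrating the identity above from $0$ forces $r^{n-1}(g^m)'<0$, i.e.\ $g'<0$, on $(0,\delta)$. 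Both desired inequalities thus hold in a right-neighbourhood of the origin.

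I would then run a first-hit argument. Set $r^* := \sup\{r\in(0,R_0):\phi>0 \text{ and } g'<0 \text{ on }(0,r)\}$ and suppose for contradiction $r^*<R_0$. As long as $\phi>0$ up to $r^*$, the integrated conservation identity yields $r^{n-1}(g^m)'(r^*)<0$ by continuity, hence $g'(r^*)<0$; therefore the breakdown must be $\phi(r^*)=0$ with $g'(r^*)<0$, whence $g(r^*)=-\tilde k\,r^*\,g'(r^*)$. If $\tilde\beta<0$ then $\tilde k<0$ and the right-hand side is negative, contradicting $g>0$. In the remaining case $\tilde\beta>0$ (so $\tilde k>0$), the conditions $\phi>0$ on $(0,r^*)$ and $\phi(r^*)=0$ give $\phi'(r^*)\leq 0$, and I would compute $\phi'(r^*)$ explicitly: at $r^*$ the ODE collapses to $(g^m)''(r^*)+\tfrac{n-1}{r^*}(g^m)'(r^*)=0$, from which expanding $(g^m)''$ and solving yields $g''(r^*) = -\tfrac{n-1}{r^*}g'(r^*)+\tfrac{1-m}{g(r^*)}g'(r^*)^2$. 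Substituting this together with $g(r^*)=\tilde k\,r^*\,|g'(r^*)|$ into $\phi'=(1+\tilde k)g'+\tilde k\,r\,g''$ simplifies after routine algebra to
\[
\phi'(r^*) \,=\, \bigl[(n-2)\tilde k - m\bigr]\,|g'(r^*)|.
\]

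The hypothesis $\tilde\alpha/\tilde\beta\leq(n-2)/m$, using $\tilde\alpha,\tilde\beta>0$, translates to $\tilde k\geq m/(n-2)$, forcing $\phi'(r^*)\geq 0$; combined with $\phi'(r^*)\leq 0$ this is already a contradiction in the strict-inequality range $\tilde\alpha/\tilde\beta<(n-2)/m$ that is actually invoked later in the paper (see \eqref{eq-tilde-alpha-beta-1}), and the boundary equality case can be closed by a second-order analysis of $\phi$ at $r^*$ using the same ODE identity. The main obstacle is precisely this sign analysis at the first breakdown: identifying the correct auxiliary function $\phi$ so that the driving term of the ODE acquires a conservation form whose sign is controlled by $\phi$, and extracting $\phi'(r^*)$ sharply enough from the degenerate equation at $\phi(r^*)=0$ to activate the hypothesis $\tilde\alpha/\tilde\beta\leq(n-2)/m$.
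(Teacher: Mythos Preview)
Your approach is correct and genuinely different from the paper's. Both arguments hinge on the same auxiliary function $\phi(r)=g(r)+\tilde k\,r\,g'(r)$ (the paper calls it $h_1$), but the paper derives a first-order linear ODE
\[
h_1'+\left\{\frac{n-2-m/\tilde k}{r}-(1-m)\frac{g'}{g}+\frac{\tilde\beta}{m}r^{\frac{n-2-nm}{m}-1}g^{1-m}\right\}h_1=\Bigl(n-2-\frac{m}{\tilde k}\Bigr)\frac{g}{r}\ge 0,
\]
multiplies by the integrating factor $r^{\,n-2-m/\tilde k}\exp\bigl(\tfrac{\tilde\beta}{m}\int_\varepsilon^r\rho^{\frac{n-2-nm}{m}-1}g^{1-m}\,d\rho\bigr)$, and concludes $h_1>0$ in one stroke for every admissible $\tilde k$. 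Your first-hit argument instead exploits the conservation identity $\bigl(r^{n-1}(g^m)'\bigr)'=-\tilde\alpha\,r^{(n-2-3m)/m}\phi$ and the clean formula $\phi'(r^*)=[(n-2)\tilde k-m]\,|g'(r^*)|$ at the putative first zero; this is more elementary and makes the role of the hypothesis $\tilde\alpha/\tilde\beta\le(n-2)/m$ very transparent.

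The one soft spot is the boundary case $\tilde\alpha/\tilde\beta=(n-2)/m$. There your formula only gives $\phi'(r^*)=0$, and a ``second-order analysis of $\phi$'' will not close the argument by itself: differentiating further you find $\phi''(r^*)=0$ as well, and inductively every derivative of $\phi$ vanishes at $r^*$, which yields no contradiction without invoking analyticity. The clean way to finish is exactly the paper's mechanism specialized to this case: the linear ODE above becomes homogeneous, $\phi'+P(r)\phi=0$, so $\phi(r^*)=0$ would force $\phi\equiv 0$ on $(0,R_0)$ by uniqueness of solutions, contradicting $\phi(0^+)=\eta>0$. With that patch your proof is complete; the paper's integrating-factor route simply avoids the case split altogether.
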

\begin{proof}
The proof is similar to  one for \cite[Lemma 2.1]{Hs2}. Let $h_1(r):=g(r)+\tilde k rg'(r).$
By direct computation $h_1$ satisfies 
\begin{equation}\label{eq-h_1}
h_1'+\left\{\frac{n-2-(m/\tilde k)}{r}-(1-m)\frac{g'}{g}+\frac{\tilde\beta}{m}r^{\frac{n-2-nm}{m}-1}g^{1-m}\right\}h_1=\left(n-2-\frac{m}{\tilde k}\right)\frac{g}{r}\geq0
\end{equation}
in $(0,R_0)$ since $\frac{1}{\tilde k}=\frac{\tilde\alpha}{\tilde\beta}\leq \frac{n-2}{m}$. By \eqref{eq-initial} there exists $\ve_0\in (0,R_0)$ such that  $h_1(\ve)>0$ for any $0<\ve\le\ve_0$. Let $0<\ve\le\ve_0$ and
$$
q(r):=g^{m-1}(r) \exp\left(\frac{\tilde \beta}{m} \int_\ve^r\rho^{\frac{n-2-nm}{m}-1} g^{1-m}(\rho)d\rho\right),\quad \forall r\in(\ve, R_0).
$$  
Multiplying \eqref{eq-h_1} by $r^{n-2-(m/\tilde k)}q(r)$, we have 
$$
\left(r^{n-2-(m/\tilde k)}q(r)h_1(r)\right)'\geq0\quad\mbox{in $(\ve,R_0)$},\quad\forall 0<\ve\le\ve_0
$$
which implies $h_1(r)>0$ for $\ve< r<R_0$ and $0<\ve\le\ve_0$. Hence $h_1(r)>0$ for any $0<r<R_0$. Since 
$$
\frac{1}{r^{n-1}}\left( r^{n-1}(g^m)'\right)'=-\tilde\alpha r^{\frac{n-2-nm}{m}-2} h_1<0\quad\forall r\in(0,R_0), 
$$
it follows from \eqref{eq-initial} that $r^{n-1}(g^m)'<0$ in $(0,R_0)$. Hence $g'<0$ in $(0,R_0)$ and the lemma follows. 
\end{proof}
 
In the following lemmas we will prove the local existence of solution of the O.D.E \eqref{eq-fde-inversion}.  

\begin{lemma}\label{lem-local-existence-inversion-m-small}
Let $n\geq3,$ $0<m<\frac{n-2}{n+1} $, and $\tilde \al,\tilde\be\in\R.$
For any $\eta>0,$ there exists a constant $\ve>0$ such that \eqref{eq-fde-inversion} has a unique solution $g\in C^1([0,\ve);\R)\cap C^2((0,\ve);\R)$ in $(0,\ve)$ which satisfies
\begin{equation}\label{eq-initial-m-small}
g(0)=\eta\quad \mbox{and}\quad g'(0)=0.
\end{equation} 
\end{lemma}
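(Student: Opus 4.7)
The plan is to convert the ODE into an integral equation and apply the Banach fixed point theorem on a small interval $[0,\ve]$. Let $\sigma := \frac{n-2-nm}{m}-2$, so the coefficient at the origin is $r^\sigma$. The restriction $0<m<\frac{n-2}{n+1}$ is precisely the condition $\sigma+1>0$, which ensures that all the integrals involving $r^\sigma$ that will appear below are convergent near $r=0$. Note also that $n+\sigma>0$ under the same hypothesis.

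First, I would rewrite \eqref{eq-fde-inversion} in divergence form as $\bigl(r^{n-1}(g^m)'\bigr)' = -r^{n-1+\sigma}(\tilde\al g+\tilde\be r g')$ and integrate from $0$ to $r$. The $g'$-term inside the integrand is eliminated by integration by parts,
$$
\int_0^r \tilde\be\, s^{n+\sigma}g'(s)\,ds = \tilde\be\, r^{n+\sigma}g(r) - \tilde\be(n+\sigma)\int_0^r s^{n-1+\sigma}g(s)\,ds,
$$
the boundary term at $s=0$ vanishing since $n+\sigma>0$ and $g$ stays bounded. A second integration from $0$ to $r$, using $(g^m)(0)=\eta^m$ and the estimate $(g^m)'(r)=O(r^{1+\sigma})\to 0$, converts the ODE together with \eqref{eq-initial-m-small} into the integral equation
$$
g^m(r) = \eta^m - \tilde\be\int_0^r t^{1+\sigma}g(t)\,dt - K\int_0^r t^{1-n}\!\int_0^t s^{n-1+\sigma}g(s)\,ds\,dt,\qquad K:=\tilde\al-\tilde\be(n+\sigma).
$$

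Second, on the closed set $\cX_\ve:=\{g\in C([0,\ve]):\|g-\eta\|_\infty\le \eta/2\}$ with the sup norm, I would define
$$
T[g](r) := \left[\eta^m - \tilde\be\int_0^r t^{1+\sigma}g(t)\,dt - K\int_0^r t^{1-n}\!\int_0^t s^{n-1+\sigma}g(s)\,ds\,dt\right]^{1/m}.
$$
For $g\in\cX_\ve$, the bracketed quantity lies in $[(\eta/2)^m,(3\eta/2)^m]$ provided $\ve$ is small, since each of the two integrals is $O(\|g\|_\infty\,\ve^{2+\sigma})$ thanks to $2+\sigma>0$ and $n+\sigma>0$. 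The map $x\mapsto x^{1/m}$ is Lipschitz on this interval, so
$$
\|T[g]-T[h]\|_\infty \le C(\eta,\tilde\al,\tilde\be,m,n)\,\ve^{2+\sigma}\,\|g-h\|_\infty,
$$
and similarly $\|T[g]-\eta\|_\infty\le C\,\ve^{2+\sigma}$. For $\ve$ small enough, $T$ sends $\cX_\ve$ into itself and is a strict contraction; Banach's fixed point theorem produces a unique $g\in\cX_\ve$ with $T[g]=g$.

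Finally, I would verify that the fixed point has the required regularity and solves \eqref{eq-fde-inversion} classically. Differentiating the integral equation for $g^m$ once shows that $(g^m)'$ is continuous on $[0,\ve)$ with $(g^m)'(0)=0$, hence $g\in C^1([0,\ve))$ with $g'(0)=0$; differentiating twice on $(0,\ve)$ recovers the ODE pointwise. Uniqueness at the ODE level follows from the observation that any $C^1([0,\ve))\cap C^2((0,\ve))$ solution of \eqref{eq-fde-inversion}--\eqref{eq-initial-m-small} must satisfy the same integral equation (by reversing the derivation), so coincides with the Banach fixed point on a possibly smaller interval. The main obstacle is the singular weight $r^\sigma$ at the origin; the integration-by-parts step that removes $g'$ from the integrand, combined with the assumption $m<\frac{n-2}{n+1}$ giving $\sigma+1>0$, is exactly what makes all the relevant integrals small of order $\ve^{2+\sigma}$ so that the contraction estimate closes.
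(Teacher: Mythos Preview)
Your argument is correct and uses the same overall strategy as the paper (Banach fixed point on an integral reformulation) but with a different technical setup. The paper works with the pair $(g,h)$, $h$ standing for $g'$, on $C([0,\ve])\times C([0,\ve])$ and applies the contraction mapping theorem to the map
\[
\Phi(g,h)(r)=\Bigl(\eta+\int_0^r h,\ -\tfrac{g^{1-m}(r)}{mr^{n-1}}\int_0^r\rho^{\,n-3+\frac{n-2-nm}{m}}\bigl(\tilde\alpha g+\tilde\beta\rho h\bigr)\,d\rho\Bigr),
\]
keeping $g'$ explicitly in the integrand; the contraction estimates then have to control both components and involve the factor $g^{1-m}$. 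Your integration-by-parts step removes $g'$ from the integrand entirely, and a second integration produces a closed fixed-point equation for $g$ alone (through $g^m$), so you only need a contraction on a ball in $C([0,\ve])$; this is more economical for the present lemma. On the other hand, the paper's two-component formulation adapts immediately to the companion lemma for $\frac{n-2}{n+1}\le m<\frac{n-2}{n}$, where $g'$ blows up like $r^{-\delta_1}$ at the origin and one builds the expected singular behaviour of $h$ into the fixed-point space via the norm $\|r^{\delta_1}h\|_{L^\infty}$; your single-component approach would need a separate argument to recover the precise asymptotic $\lim_{r\to0^+}r^{\delta_1}g'(r)=-\tilde\alpha\eta^{2-m}/(n-2-2m)$ in that regime.
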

\begin{proof} 
Let $\tilde\eta:= {\eta}/{2},$ and let $\ve\in(0,1)$ be a constant to be chosen later. 
We first observe that if $g\in C^1([0,\ve);\R)\cap C^2((0,\ve);\R)$ is a solution of \eqref{eq-fde-inversion} in $(0,\ve)$ which satisfies \eqref{eq-initial-m-small}, then 
\begin{equation*}
(g^m(r))'=-\frac{1}{r^{n-1}}\int_0^r\rho^{n-3+\frac{n-2-nm}{m}}\left\{ \tilde\alpha g(\rho)+\tilde\beta \rho g_\rho(\rho)\right\}d\rho\quad\forall r\in(0,\ve)
\end{equation*} 
which suggests one to use fix point argument to prove existence of solution of \eqref{eq-fde-inversion}.  
We now define the Banach space 
$$
\cX_\ve:=\left\{(g,h): g, h\in C\left( [0,\ve]; \R\right) \right\}
$$ 
with a norm given by
$$||(g,h)||_{\cX_\ve}=\max\left\{\|g\|_{L^\infty([0, \ve])} ,\|h\|_{L^\infty\left([0, \ve]\right)} \right\}.$$  
For any $(g,h)\in \cX_\ve,$ we define  
$$\Phi(g,h):=\left(\Phi_1(g,h),\Phi_2(g,h)\right),$$ 
where for $0<r\leq\ve,$
\begin{equation}\label{eq-existence-contraction-map}
\left\{
\begin{aligned}
&\Phi_1(g,h)(r):=\eta+\int_0^r h(\rho)\,d\rho,\\
&\Phi_2(g,h)(r):=-\frac{g^{1-m}(r)}{mr^{n-1}}\int_0^r \rho^{ n-3+ \frac{n-2-nm}{m}}\left\{\tilde\alpha g(\rho)+\tilde\beta \rho h(\rho)\right\}\,d\rho.
\end{aligned}\right.
\end{equation}
Let $$\cD_{\ve,\eta}:=\left\{ (g,h)\in \cX_\ve:  ||(g,h)-(\eta,0)||_{\cX_{\ve}}\leq \tilde\eta=\eta/2 \right\}.$$
Note that $\cD_{\ve,\eta}$  is a closed subspace of $\cX_\ve$. We will show that if $\ve\in(0,1)$ is sufficiently small, the map $(g,h)\mapsto\Phi(g,h)$ 
will have  a unique  fixed point   in $\cD_{\ve,\eta}.$

We first  prove that $\Phi(\cD_{\ve,\eta})\subset \cD_{\ve,\eta}$ if $\ve\in(0,1)$ is sufficiently  small. In fact for any $\ve\in(0,1)$ and $(g,h)\in \cD_{\ve,\eta},$ 
\begin{align*}
&\max_{0\leq r\leq\ve} \left| \int_0^r h(\rho)\,d\rho\right| \leq \tilde\eta \ve  \leq \tilde\eta,
\end{align*}
and for ${0< r\leq\ve},$
\begin{align}\label{eq-existence-contraction-map2-est}
&\frac{g^{1-m}(r)}{mr^{n-1}}\int_0^r\rho^{n-3+\frac{n-2-nm}{m}}\left\{|\tilde\alpha|g(\rho)+|\tilde\beta\rho h(\rho)|\right\}d\rho\notag\\  
\leq&{(3\tilde\eta)^{1-m}}\left(\frac{3{}\tilde\eta|\tilde\alpha|\,r^{\frac{n-2-nm}{m}-1}}{n-2-2m }+\frac{ |\tilde\beta|\tilde\eta\,r^{\frac{n-2-nm}{m}}}{n-2-m}\right)\notag\\
\leq&(3\tilde\eta)^{2-m}\frac{|\tilde\alpha |+|\tilde\beta|}{n-2-2m} {r^{\frac{n-2-nm}{m}-1}}\notag\\
= &M r^{\frac{n-2-nm}{m}-1}\tilde\eta\leq  M \ve^{\frac{n-2-nm}{m}-1} \tilde\eta,
\end{align} 
where $M:= 3(3\tilde\eta)^{1-m}\frac{|\tilde\alpha| +|\tilde\beta| }{ n-2-2m} ,$ since $\tilde \eta\leq g\leq 3\tilde \eta$ for $(g,h)\in \cD_{\ve,\eta}.$
Since  $\frac{n-2-nm}{m}-1 >0$, by \eqref{eq-existence-contraction-map2-est} $\Phi(\cD_{\ve,\eta})\subset\cD_{\ve,\eta}$ for sufficiently small $\ve\in(0,1)$.   

Now we will prove that $\Phi\left|_{\cD_{\ve,\eta}}\right.$ is a contraction map  if $\ve\in (0,1)$ is sufficiently small.  Let $(g_1,h_1),(g_2,h_2)\in \cD_{\ve,\eta}$ and $\delta:=||(g_1,h_1)-(g_2,h_2)||_{\cX_\ve}$. Then   
\begin{equation*}
\|\Phi_1(g_1,h_1)- \Phi_1(g_2,h_2)\|_{L^\infty([0,\ve])}\leq  \max_{0\leq r\leq\ve }\int_0^r |h_1(\rho)-h_2(\rho)|d\rho\leq \ve\delta,
\end{equation*}
and  by \eqref{eq-existence-contraction-map2-est}, for $0< r\leq \ve$, 
\begin{align*}
&\left| \frac{g_1^{1-m}(r)}{mr^{n-1}}\int_0^r\rho^{n-3+\frac{n-2-nm}{m}}\left\{ \tilde\alpha g_1(\rho)+\tilde\beta \rho h_1(\rho)\right\}d\rho\right.\\
&\qquad\left.-\frac{g_2^{1-m}(r)}{mr^{n-1}}\int_0^r \rho^{n-3+\frac{n-2-nm}{m}}\left\{\tilde\alpha g_2(\rho)+\tilde\beta \rho h_2(\rho)\right\}d\rho  
  \right|\\
\leq&\frac{|g_1^{1-m}(r)-g_2^{1-m}(r)|}{mr^{n-1}}\int_0^r \rho^{ n-3+ \frac{n-2-nm}{m}}\left\{|\tilde\alpha| g_1
+|\tilde\beta \rho h_1|\right\}d\rho \\
&\qquad+\delta\cdot  \frac{g_2^{1-m}(r)}{mr^{n-1}}\int_0^r \rho^{ n-3+ \frac{n-2-nm}{m}}\left( |\tilde\alpha|+|\tilde\beta| \rho  \right)d\rho\\
\leq& \frac{(1-m)}{\tilde\eta^m}  \frac{|g_1(r)-g_2(r)|}{  g_1^{1-m}(r)} M\ve^{\frac{n-2-nm}{m}-1}\tilde\eta 
+\frac{\delta\,(3\tilde\eta)^{1-m}}{mr^{n-1}}\int_0^r\rho^{n-3+\frac{n-2-nm}{m}}\left(| \tilde\alpha|+|\tilde\beta| \rho  \right)d\rho    \\
\leq&\left\{(1-m)M+\frac{(3\tilde\eta)^{1-m}}{n-2-2m} \left( |\tilde\alpha|+|\tilde\beta|\ve\right)
\right\} \ve^{\frac{n-2-nm}{m}-1}  \,\delta
\end{align*}
since $\tilde\eta\leq g_1(r), g_2(r)\leq 3\tilde\eta$ for any $r\in[0,\ve]$. Hence 
\begin{align*}
&\|\Phi_2(g_1,h_1)- \Phi_2(g_2,h_2)\|_{L^\infty([0,\ve])}\\
\leq&\max_{0 < r\leq\ve } 
\left|\frac{g_1^{1-m}(r)}{mr^{n-1}}\int_0^r \rho^{n-3+\frac{n-2-nm}{m}}\left\{ \tilde\alpha g_1(\rho)+\tilde\beta \rho h_1(\rho)\right\}d\rho\right.\\
&\qquad\left.-\frac{g_2^{1-m}(r)}{mr^{n-1}}\int_0^r\rho^{n-3+\frac{n-2-nm}{m}}\left\{ \tilde\alpha g_2(\rho)+\tilde\beta\rho h_2(\rho)\right\}d\rho\right|\\
\leq&\left\{(1-m)M+\frac{(3\tilde\eta)^{1-m}}{n-2-2m} \left( |\tilde\alpha|+|\tilde\beta|\ve\right)
\right\} \ve^{\frac{n-2-nm}{m}-1}  \,\delta.
\end{align*}
Since $\frac{n-2-nm}{m}-1>0$, by choosing $0<\ve<1$ sufficiently small, we obtain that $\Phi$ is Lipschitz continuous on $\cD_{\ve,\eta}$ with a Lipschitz constant which is less than $1/2$. Hence by the contraction map theorem there exists a unique fixed point $(g,h)=\Phi(g,h)$ in $ \cD_{\ve,\eta}.$  Then
\begin{align}
&\left\{\begin{aligned}
g(r)=&\eta+\int_0^rh(\rho)\,d\rho\quad\forall 0\le r<\ve\\
h(r)=&-\frac{ g^{1-m}(r)}{mr^{n-1}}\int_0^r\rho^{n-3+\frac{n-2-nm}{m}}(\tilde\alpha g(\rho)+\tilde\beta\rho h(\rho))\,d\rho \quad\forall 0<r<\ve
\end{aligned}\right.\label{eq-g-h-representation}\\
\Rightarrow\quad&g'(r)=h(r)=-\frac{g^{1-m}(r)}{mr^{n-1}}\int_0^r\rho^{n-3+\frac{n-2-nm}{m}}(\tilde\alpha g(\rho)+\tilde\beta\rho g'(\rho))\,d\rho \quad\forall 0<r<\ve\label{eqn-g-integral-representation}\\
\Rightarrow\quad&r^{n-1}(g^m)'(r)=-\int_0^r\rho^{n-3+\frac{n-2-nm}{m}}(\tilde\alpha g(\rho)+\tilde\beta\rho g'(\rho))\,d\rho \quad\forall 0<r<\ve.
\label{eqn-g-integral-representation2}
\end{align}
By \eqref{eq-g-h-representation} and \eqref{eqn-g-integral-representation}, $g(0)=\eta$ and $g'(r)$ is   continuously differentiable in $(0,\ve)$. Since 
$g'=h\in C([0,\ve);\R)$ in $(0,\ve)$, by \eqref{eqn-g-integral-representation},
\begin{equation*}
|g'(r)|\le\frac{C}{r^{n-1}}\int_0^r\rho^{n-3+\frac{n-2-nm}{m}}\,d\rho\le C'r^{\frac{n-2-nm}{m}-1}\to 0\quad\mbox{ as }r\to 0
\end{equation*}
and then $g$ belongs to $C^1([0,\ve);\R)\cap C^2((0,\ve);\R)$ and satisfies  \eqref{eq-initial-m-small}. 
Differentiating \eqref{eqn-g-integral-representation2} with respect to 
$r\in (0,\ve)$, we get that $g$ satisfies \eqref{eq-fde-inversion}  in $(0,\ve)$. Hence $g\in C^1([0,\ve);\R)\cap C^2((0,\ve);\R)$ is the unique solution of  \eqref{eq-fde-inversion}  in $(0,\ve)$  which satisfies  \eqref{eq-initial-m-small}.
\end{proof} 

\begin{lemma}\label{lem-local-existence-inversion-m-large}
Let $n\geq3,$  $\frac{n-2}{n+1}\leq m< \frac{n-2}{n}$, and $\tilde\al,\tilde\be\in\R.$ For any $\eta> 0,$  there exists a constant $\ve>0$ such that
\eqref{eq-fde-inversion} has a unique solution $g\in C^{0, \delta_0}([0,\ve);\R)\cap C^2((0,\ve);\R)$ in $(0,\ve)$ which satisfies
\begin{equation}\label{eq-initial-m-large}
g(0)=\eta\quad \mbox{ and } \quad \lim_{r\to 0^+}r^{\delta_1} g_r(r)=-\frac{\tilde\alpha \eta^{2-m}}{n-2-2m}
\end{equation} 
where 
\begin{equation}\label{defn-delta0-1}
\delta_1=1-\frac{n-2-nm}{m}\in[0,1)\quad\mbox{ and }\quad\delta_0=\frac{1-\delta_1}{2}=\frac{n-2-nm}{2m}\in(0,1/2].
\end{equation} 
\end{lemma}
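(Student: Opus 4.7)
The strategy mirrors the proof of Lemma~\ref{lem-local-existence-inversion-m-small}, but the derivative $g_r$ is no longer bounded near the origin; it blows up like $r^{-\delta_1}$. Accordingly, I would work with the bounded auxiliary quantity $h(r):=r^{\delta_1}g_r(r)$. Integrating $(r^{n-1}(g^m)_r)_r=-r^{n-3+(n-2-nm)/m}(\tilde\alpha g+\tilde\beta r g_r)$ once, dividing by $r^{n-1}$, using the identity $n-1-\delta_1=(n-2-2m)/m$, and substituting $\rho=rs$ in the resulting integrals, one is led to the coupled non-singular system
\begin{align*}
g(r) &= \eta + \int_0^r \rho^{-\delta_1} h(\rho)\,d\rho,\\
h(r) &= -\frac{\tilde\alpha\,g^{1-m}(r)}{m}\int_0^1 s^{n-2-\delta_1} g(rs)\,ds-\frac{\tilde\beta\,g^{1-m}(r)\,r^{1-\delta_1}}{m}\int_0^1 s^{n-1-2\delta_1} h(rs)\,ds.
\end{align*}
Setting $r=0$ in the second equation automatically recovers $h(0)=-\tilde\alpha\eta^{2-m}/(n-2-2m)$, matching \eqref{eq-initial-m-large}.

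I then plan to apply the Banach fixed point theorem to the map $\Phi=(\Phi_1,\Phi_2)$ defined by the two right-hand sides on $\cX_\ve := C([0,\ve];\R)\times C([0,\ve];\R)$ with the sup norm. Inside the closed ball
$$\cD_{\ve,\eta}:=\left\{(g,h)\in\cX_\ve:\|(g,h)-(\eta,h(0))\|_{\cX_\ve}\le \eta/2\right\},$$
one has $\eta/2\le g\le 3\eta/2$, hence $g^{1-m}$ is a smooth bounded function of $g$. Because $\delta_1<1$, the weight $\rho^{-\delta_1}$ is integrable and $\Phi_1$ displaces $g$ by at most $O(\ve^{1-\delta_1})$. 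The prefactor $r^{1-\delta_1}$ in the $\tilde\beta$-term makes that contribution vanish at $r=0$ and produces a correction of size $O(\ve^{1-\delta_1})$; the $\tilde\alpha$-term in $\Phi_2$ is continuous at $r=0$ with value exactly $h(0)$, and replacing $g(rs)$ by $\eta$ gives another $O(\ve^{1-\delta_1})$ error. Analogous Lipschitz estimates, following the pattern of \eqref{eq-existence-contraction-map2-est}, yield a contraction constant of the same order, so for $\ve$ small enough $\Phi$ has a unique fixed point in $\cD_{\ve,\eta}$.

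The principal technical obstacle is to preserve a positive power of $\ve$ in every estimate despite the nonlinearity $g^{1-m}$ and the singular weight $\rho^{-\delta_1}$. This is precisely where the hypothesis $m\ge(n-2)/(n+1)$ enters, since it is equivalent to $\delta_1\in[0,1)$, ensuring integrability of $\rho^{-\delta_1}$ near $0$ together with convergence of the inner integrals ($n-2-\delta_1,\,n-1-2\delta_1\ge n-3\ge 0$). Once the fixed point $(g,h)$ is obtained, $g_r(r)=r^{-\delta_1}h(r)$ is continuous on $(0,\ve)$, and differentiating the integral equation in $r$ shows that $g\in C^2((0,\ve);\R)$ satisfies \eqref{eq-fde-inversion}. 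For the H\"older regularity at the endpoint,
$$|g(r_1)-g(r_2)|\le \|h\|_{L^\infty}\left|\int_{r_2}^{r_1}\rho^{-\delta_1}\,d\rho\right|\le \frac{\|h\|_{L^\infty}}{1-\delta_1}|r_1-r_2|^{1-\delta_1}\le C\,\ve^{\delta_0}|r_1-r_2|^{\delta_0},$$
because $1-\delta_1=2\delta_0$, which gives $g\in C^{0,\delta_0}([0,\ve);\R)$. The boundary values $g(0)=\eta$ and $\lim_{r\to 0^+}r^{\delta_1}g_r(r)=h(0)$ are immediate from the construction, and uniqueness follows from the contraction argument.
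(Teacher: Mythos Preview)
Your integral reformulation via $h(r)=r^{\delta_1}g_r(r)$ and the substitution $\rho=rs$ is clean and correct, but the contraction step fails as written. The $\tilde\alpha$-term in $\Phi_2$,
\[
-\frac{\tilde\alpha\,g^{1-m}(r)}{m}\int_0^1 s^{n-2-\delta_1} g(rs)\,ds,
\]
has Lipschitz constant of order $|\tilde\alpha|(2-m)\eta^{1-m}/(n-2-2m)$ with respect to $\|g_1-g_2\|_{L^\infty}$, \emph{independently of} $\ve$; this can certainly exceed $1$. Concretely, for a generic $(g,h)\in\cD_{\ve,\eta}$ you only know $|g-\eta|\le\eta/2$, so your claim that ``replacing $g(rs)$ by $\eta$ gives another $O(\ve^{1-\delta_1})$ error'' is unjustified---the error is $O(\|g-\eta\|_{L^\infty})=O(1)$. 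The same obstruction prevents the inclusion $\Phi(\cD_{\ve,\eta})\subset\cD_{\ve,\eta}$, since $|\Phi_2(g,h)(0)-h(0)|$ already depends on $g(0)^{2-m}-\eta^{2-m}$, which need not vanish in your ball.

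The paper repairs precisely this point by measuring $g$ in the $C^{0,\delta_0}$ norm and imposing the constraint $g(0)=\eta$ inside the closed set; since then $g_1(0)=g_2(0)$, one gets $\|g_1-g_2\|_{L^\infty([0,\ve])}\le\ve^{\delta_0}[g_1-g_2]_{\delta_0,[0,\ve]}$, which inserts the missing factor $\ve^{\delta_0}$ in front of the $\tilde\alpha$-contribution (see the paper's estimates \eqref{eq-ex-g-m-large-I_1-contraction}--\eqref{eq-ex-g-m-large-I_2-contraction}). An alternative fix, closer in spirit to your setup, is to substitute $g=\eta+\int_0^r\rho^{-\delta_1}h(\rho)\,d\rho$ directly into the second equation and run the fixed-point argument in $h$ alone on $C([0,\ve])$; then $\|g_1-g_2\|_{L^\infty}\le C\ve^{1-\delta_1}\|h_1-h_2\|_{L^\infty}$ comes for free and the contraction goes through with constant $O(\ve^{1-\delta_1})$. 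Either route works, but the two-component sup-norm scheme you proposed does not.
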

\begin{proof}  
Let $\tilde\eta:= {\eta}/{2}$ and let $ \ve\in(0,1)$ be a constant to be chosen later. We define the Banach space 
\begin{align*}
\cX'_\ve&:=\left\{(g,h): g\in C^{0, \delta_0}\left( [0,\ve]; \R \right), h\in C\left(  
(0,\ve]; \R \right), \mbox{ and}\right.\\
&\quad\left. \,\,\,\mbox{$r^{  \delta_1 } h(r)$  can be extended to a function in $C\left( [0,\ve]; \R \right) $}\right\},
\end{align*}
with norm 
$$
||(g,h)||_{\cX'_\ve}=\max\left\{ \| g\|_{C^{0, \delta_0}([0,\ve])},\|r^{  \delta_1} h\|_{L^\infty([0,\ve])}\right\},
$$
where 
$$
\| g\|_{C^{0, \delta_0}([0,\ve])}= \|g\|_{L^\infty ([0,\ve])}+[g]_{\delta_0, [0,\ve]}=\| g\|_{L^\infty ([0,\ve])}+\sup_{r,s\in[0,\ve],\, r\not=s} \frac{|g(r)-g(s)|}{|r-s|^{\delta_0}},
$$ 
and we will still denote the  extension  of  $r^{\delta_1}h$ by $r^{\delta_1}h.$  For any $(g,h)\in \cX'_\ve,$ let $\Phi(g,h):=\left(\Phi_1(g,h),\Phi_2(g,h)\right)$  be given by \eqref{eq-existence-contraction-map}, which is well-defined  for $0<r\leq\ve$ since $0\leq \delta_1<1$. Let 
$$
\cD'_{\ve,\eta}:=\left\{ (g,h)\in \cX'_\ve:  ||(g,h)-(\eta,-\zeta r^{-\delta_1})||_{\cX'_{\ve}}\leq \tilde\eta=\eta/2  ,\quad g(0)=\eta\right\},
$$
where
$$\zeta:= \frac{\tilde\alpha \eta^{2-m}}{n-2-2m}.$$ 
We will show that  for   $\eta>0,$ there exists    $\ve\in(0,1)$ such that the map  $(g,h)\mapsto\Phi(g,h)$ has a unique fixed point  in the closed subspace $\cD'_{\ve,\eta}.$
   
We first prove  that  $\Phi(\cD'_{\ve,\eta})\subset \cD'_{\ve,\eta}$ for sufficiently small $\ve\in (0,1)$. For any $(g,h)\in \cD'_{\ve,\eta},$  $0\leq r<\ve ,$ and  $0<s\leq\ve-r,$
\begin{equation*}
\begin{aligned}
s^{-\delta_0}\left|\Phi_1(g,h)(r+s)-\Phi_1(g,h)(r)\right|
=&s^{-\delta_0} \left| \int_r^{r+s}h(\rho)d\rho\right| \\
\leq &s^{-\frac{1-\delta_1}{2}}\left(\tilde\eta + | \zeta|\right)  \int_r^{r+s} \rho^{-\delta_1} d\rho \\
 =&\frac{ \tilde\eta +  |\zeta|  }{1-\delta_1} s^{\frac{1-\delta_1}{2}}\left\{ \left(\frac{r}{s}+1\right)^{1-\delta_1}-\left(\frac{r}{s}\right)^{1-\delta_1} \right\}\\
 \leq& \frac{ \tilde\eta + | \zeta| }{1-\delta_1} s^{\frac{1-\delta_1}{2}} 
\end{aligned}
\end{equation*}
since $(1+z)^{1-\delta_1}\leq 1+z^{1-\delta_1} $ for any $z>0$. Hence
\begin{equation*}
\left\{\begin{aligned}
 &\|\Phi_1(g,h)-\eta\|_{L^\infty([0,\ve])}\leq   \frac{ \tilde\eta + | \zeta|  }{1-\delta_1} \ve ^{ 1-\delta_1}=  \frac{m( \tilde\eta +  |\zeta|)  }{n-2-nm} \ve ^{ \frac{n-2-nm}{m}}\leq\frac{\tilde\eta}{2}\\
& \left[\Phi_1(g,h)-\eta\right]_{\delta_0 ,  [0,\ve]}= \left[\Phi_1(g,h)\right]_{\delta_0 ,  [0,\ve]}
\leq  \frac{ \tilde\eta + | \zeta|  }{1-\delta_1} \ve ^{\frac{1-\delta_1}{2}} =  \frac{m( \tilde\eta + | \zeta|)  }{n-2-nm} \ve ^{ \frac{n-2-nm}{2m}}\leq \frac{\tilde\eta}{2}
\end{aligned}\right.
\end{equation*} 
if $0<\ve^{\frac {n-2-nm}{2m}}<\frac{ (n-2-nm)\,\eta}{4m\,(\eta+|\zeta|)}. $ Thus 
\begin{equation}\label{eq-ex-g-m-large-unif-conv-0}
\left\|\Phi_1(g,h)-\eta\right\|_{C^{0,\delta_0 }(  [0,\ve])}\leq \tilde\eta
\end{equation}
if $0<\ve^{\frac {n-2-nm}{2m}}<\frac{ (n-2-nm)\,\eta}{4m\,(\eta+|\zeta|)}$. 
By  the l'Hospital rule,    for any $(g,h)\in\cD'_{\ve,\eta} $, 
\begin{align}
&\lim_{r\to 0^+} \frac{g^{1-m}(r)}{mr^{n-1-\delta_1}}\int_0^r \rho^{ n-3+ \frac{n-2-nm}{m}}\left\{ \tilde\alpha g(\rho)+\tilde\beta \rho h(\rho)\right\}d\rho\notag\\
=& \frac{\eta^{1-m}}{m}  \lim_{r\to0^+} \frac{ r^{ n-2-\delta_1}\left\{ \tilde\alpha g(r)+\tilde\beta r h(r)\right\}}{(n-1-\delta_1)r^{n-2-\delta_1}}\notag\\
=&  \frac{\eta^{1-m}}{m({n-1-\delta_1} )}\left\{\tilde\alpha\eta+ \tilde\beta\lim_{r\to0^+} rh(r)\right\}\notag\\
=&\frac{\tilde \alpha \eta^{2-m}}{n-2-2m}=\zeta\label{eq-existence-Phi-contraction-m-large-unif.-conv} 
\end{align}
since $0  \leq \delta_1  <1$ and $\displaystyle\lim_{r\to0}|rh(r)|\leq \lim_{r\to0} \left(\tilde\eta  +|\zeta|\right) r^{1-\delta_1} =0.$  This implies that   for any $(g,h)\in\cD'_{\ve,\eta}$, $r^{\delta_1}\Phi_2(g,h)\in C([0,\ve];\R)$ with $\lim_{r\to0} r^{\delta_1}\Phi_2(g,h)(r)=-\zeta$.   
Now we claim that the convergence in \eqref{eq-existence-Phi-contraction-m-large-unif.-conv} is uniform for any $(g,h)\in\cD'_{\ve,\eta} $.  We first observe that for any $0<r\leq\ve,$
\begin{align}
\frac{g^{1-m}(r)}{mr^{n-1-\delta_1}}\int_0^r\rho^{n-2+\frac{n-2-nm}{m}-\delta_1}\rho^{ \delta_1}|h(\rho)|d\rho
\leq&\frac{(3\tilde\eta)^{1-m}(\tilde\eta+|\zeta|) r^{\frac{n-2-nm}{m}}}{m\left(n-1+\frac{n-2-nm}{m}-\delta_1\right)}\notag\\
=&\frac{\left({3\tilde\eta}\right)^{1-m}\left(\tilde\eta+|\zeta|\right)}{m \left(n-2\delta_1\right)}{r^{\frac{n-2-nm}{m}}}\label{eq-ex-g-m-large-unif-conv-1}
\end{align}
and 
\begin{align}
&\left|   \frac{\tilde\alpha g^{1-m}(r)}{mr^{n-1-\delta_1}}\int_0^r \rho^{ n-3+ \frac{n-2-nm}{m}} g(\rho)\,d\rho -\zeta\right|\notag\\
=&|\tilde\alpha|\cdot \left|     \frac{g^{1-m}(r)}{mr^{n-1-\delta_1}}\int_0^r \rho^{ n-3+ \frac{n-2-nm}{m}} g(\rho)\,d\rho -\frac{   g^{2-m}(0)}{n-2-2m}\right|\notag\\ 
\leq &|\tilde\alpha|     \cdot  \frac{|g^{1-m}(r)-g^{1-m}(0)|}{mr^{n-1-\delta_1}}\int_0^r \rho^{ n-3+ \frac{n-2-nm}{m}} g(\rho)\,d\rho\notag \\  
&\qquad +|\tilde\alpha|\cdot       \frac{g^{1-m}(0)}{mr^{n-1-\delta_1}}\int_0^r \rho^{ n-3+ \frac{n-2-nm}{m}}| g(\rho)-g(0)|\,d\rho\label{eq-ex-g-m-large-unif-conv-1-1}  
\end{align}
since  $g(0)=\eta,$ $ \tilde\eta\leq g\leq 3\tilde\eta$  in  $[0,\ve],$ and 
$$\frac{1}{mr^{n- 1-\delta_1}}\int_0^r \rho^{ n-3+ \frac{n-2-nm}{m}} d\rho=\frac{1}{mr^{n- 2+\frac{n-2-nm}{m}}}\int_0^r \rho^{ n-3+ \frac{n-2-nm}{m}} d\rho=\frac{1}{n-2-2m} .$$
By  \eqref{eq-ex-g-m-large-unif-conv-1-1} and the mean value theorem, 
\begin{align}
&\left|  \frac{\tilde\alpha g^{1-m}(r)}{mr^{n-1-\delta_1}}\int_0^r \rho^{ n-3+ \frac{n-2-nm}{m}} g(\rho)\, d\rho -\zeta\right|\notag\\
\le&| \tilde\alpha|        {(1-m) \tilde\eta^{-m} |g(r)-g(0)|} \cdot\frac{3 \tilde\eta}{n-2-2m}\notag\\  
&\qquad +|\tilde\alpha |      \frac{g^{1-m}(0)}{mr^{n-1-\delta_1}}\int_0^r \rho^{ n-3+ \frac{n-2-nm}{m} }|g(\rho)-g(0)|\, d\rho
\label{eq-ex-g-m-large-unif-conv-1-2}  
\end{align}
since $\tilde\eta\leq g(r)\leq 3 \tilde\eta$ for $r\in[0,\ve]$. 
Since $[g]_{  \delta_0, [0,\ve]}
\leq \tilde\eta$ for $(g,h)\in\cD'_{\ve,\eta},$ 
    the right hand side of \eqref{eq-ex-g-m-large-unif-conv-1-2} is bounded above by
\begin{align}
\leq&\frac{| \tilde\alpha |(1-m){3\tilde \eta^{2-m}} }{n-2-2m} r^{\delta_0} +\frac{|\tilde\alpha|\eta^{1-m}\tilde\eta}{n-2-2m}r^{\delta_0}\le\frac{2|\tilde\alpha|\eta^{2-m } }{n-2-2m}r^{\frac{n-2-nm}{2m}}\quad\forall 0<r\le\ve.
\label{eq-ex-g-m-large-unif-conv-2}
\end{align}
By \eqref{eq-ex-g-m-large-unif-conv-1}, \eqref{eq-ex-g-m-large-unif-conv-1-2}  and \eqref{eq-ex-g-m-large-unif-conv-2},  we deduce    uniform  convergence in  \eqref{eq-existence-Phi-contraction-m-large-unif.-conv}   for any $(g,h)\in\cD'_{\ve,\eta} $.  
By  \eqref{eq-ex-g-m-large-unif-conv-1}, \eqref{eq-ex-g-m-large-unif-conv-1-2} and \eqref{eq-ex-g-m-large-unif-conv-2}, for any  $\eta>0$, $(g,h)\in\cD'_{\ve,\eta} $, $ {0< r\leq\ve} , $ 
 \begin{align}
 r^{  \delta_1}\left|\Phi_2(g,h)(r)+\zeta r^{-\delta_1}\right|&=  \left|  \frac{g^{1-m}(r)}{mr^{n-1-\delta_1}}\int_0^r \rho^{ n-3+ \frac{n-2-nm}{m}}\left\{ \tilde\alpha g(\rho)+\tilde\beta \rho h(\rho)\right\}d\rho -\zeta\right|\notag\\
 & \leq  \left\{ \frac{2|\tilde\alpha| \eta^{2-m } }{n-2-2m} + \frac{|\tilde\beta|( \eta/2+|\zeta| )}{m \left(n-2\delta_1 \right)}\left( \frac{3 \eta}{2}\right)^{1-m} \right\} {\ve^{\frac{n-2-nm}{2m}}}\leq \tilde \eta,\label{eq-2nd-variable-bd}
 \end{align} 
if $\ve\in(0,1)$ is sufficiently small. Hence by \eqref{eq-ex-g-m-large-unif-conv-0} and \eqref{eq-2nd-variable-bd}, for any $\eta>0$, $\Phi(\cD'_{\ve,\eta})\subset\cD'_{\ve,\eta}$  if $\ve\in(0,1)$ is sufficiently small.

Now we will show that $\Phi\left|_{\cD'_{\ve,\eta}}\right.$ is a contraction map  if  $\ve\in (0,1)$ is sufficiently small. Let $(g_1,h_1),(g_2,h_2)\in \cD'_{\ve,\eta}$ and $\delta:=||(g_1,h_1)-(g_2,h_2)||_{\cX'_\ve}$. Then
\begin{align*}
 &\|\Phi_1(g_1,h_1)- \Phi_1(g_2,h_2)\|_{L^\infty([0,\ve])} +   \left[\Phi_1(g_1,h_1)-\Phi_1(g_2,h_2)\right]_{\delta_0 ,  [0,\ve]}\\
 &=  \max_{0\leq r\leq\ve }\left|\int_0^r \left\{h_1(\rho)-h_2(\rho)\right\}d\rho\right|+\sup_{0\leq r<\ve,\,0<s\leq\ve-r}s^{-\delta_0} \left| \int_r^{r+s}\left\{h_1(\rho)-h_2(\rho)\right\}d\rho\right| \\
 &\leq  \left( \int_0^r \rho^{-  \delta_1}d\rho+ \sup_{0\leq r<\ve,\,0<s\leq\ve-r} s^{-\frac{1-\delta_1}{2}}   \int_r^{r+s} \rho^{-\delta_1} d\rho \right) \delta\\
 &=   \left[\frac{r^{1- \delta_1}}{1- \delta_1}+ \sup_{0\leq r<\ve,\,0<s\leq\ve-r} \frac{ s^{\frac{1-\delta_1}{2}} }{1-\delta_1} \left\{ \left(\frac{r}{s}+1\right)^{1-\delta_1}-\left(\frac{r}{s}\right)^{1-\delta_1} \right\}\right] \delta \\
 &\leq   \left(\frac{\ve^{1- \delta_1}}{1- \delta_1}+   \frac{\ve^{\frac{1-\delta_1}{2}} }{1-\delta_1} \right) \delta\leq \frac{2\ve^{\frac{n-2-nm}{2m}} }{1-\delta_1}  \,\delta   \leq\frac{\delta}{2}
 \end{align*} 
  if $\ve>0$ is sufficiently small,
  and 
for any $0<r\leq \ve, $ 
   \begin{align*}
& r^{ \delta_1}|\Phi_2(g_1,h_1)- \Phi_2(g_2,h_2)|(r)\\
=&\left| \frac{g_1^{1-m}(r)}{mr^{n-1- \delta_1}}\int_0^r \rho^{ n-3+ \frac{n-2-nm}{m}}\left\{ \tilde\alpha g_1(\rho)+\tilde\beta \rho h_1(\rho)\right\}d\rho\right.\\
&\qquad \left.-\frac{g_2^{1-m}(r)}{mr^{n-1-\delta_1}}\int_0^r\rho^{n-3+\frac{n-2-nm}{m}}\left\{ \tilde\alpha g_2(\rho)+\tilde\beta\rho h_2(\rho)\right\}d\rho\right|\\
\leq&\frac{|g_1^{1-m}(r)-g_2^{1-m}(r)|}{mr^{n-1- \delta_1}}\int_0^r \rho^{ n-3+ \frac{n-2-nm}{m}}\left\{| \tilde\alpha| g_1(\rho)+|\tilde\beta \rho h_1(\rho)|\right\}d\rho  \\
&\qquad +\frac{  g_2^{1-m}(r)}{mr^{n-1-\delta_1}}\int_0^r\rho^{n-3+\frac{n-2-nm}{m}}\left\{| \tilde\alpha||g_1(\rho)-g_2(\rho)|+|\tilde\beta |\rho|h_1(\rho)-h_2(\rho)| \right\}d\rho \\
=:&I_1+I_2. 
\end{align*}
Since $\eta/2 \leq g_1, g_2\leq 3\eta/2$ in $[0,\ve]$, by  the mean value theorem  
\begin{equation}\label{eq-ex-g-m-large-I_1}
I_1 \leq\frac{(1-m)|g_1(r)-g_2(r)|}{  m(\eta/2)^mr^{n-1- \delta_1}}  \int_0^r \rho^{ n-3+ \frac{n-2-nm}{m}}\left\{ |\tilde\alpha |g_1(\rho)+|\tilde\beta \rho h_1(\rho)|\right\}d\rho. 
      \end{equation}
Since $g_1(0)=g_2(0)=\eta,$ 
\begin{equation}\label{eq-g1-g2-l-infty-bd}
\|g_1-g_2\|_{L^\infty([0,\ve])}\leq\sup_{0\leq r\leq\ve}r^{\delta_0 } [g_1-g_2]_{\delta_0,[0,r]}\leq \ve^{\delta_0}\delta. 
\end{equation}
Hence  it follows from  \eqref{eq-ex-g-m-large-unif-conv-1}, \eqref{eq-ex-g-m-large-unif-conv-1-2}, \eqref{eq-ex-g-m-large-unif-conv-2}, \eqref{eq-ex-g-m-large-I_1} and \eqref{eq-g1-g2-l-infty-bd}, that for $0<r\leq\ve,$
    \begin{equation}\label{eq-ex-g-m-large-I_1-contraction}
    \begin{aligned}
I_1\leq&\frac{(1-m)|g_1(r)-g_2(r)|}{ (\eta/2)^mg_1^{1-m}(r)} \cdot \frac{g_1^{1-m}(r)}{  mr^{n-1-\delta_1}}  \int_0^r \rho^{ n-3+ \frac{n-2-nm}{m}}\left( |\tilde\alpha| g_1(\rho)+|\tilde\beta \rho h_1(\rho)|\right)d\rho      \\  
 \leq &   \frac{(1-m)|g_1(r)-g_2(r)|}{ (\eta/2)^mg_1^{1-m}(r)} \cdot\left[| \zeta| +\left\{ \frac{2|\tilde\alpha| \eta^{2-m } }{n-2-2m} + \frac{|\tilde\beta|( \eta+|\zeta|)}{m (n-2\delta_1) }\left( \frac{3 \eta}{2}\right)^{1-m} \right\} {r^{\frac{n-2-nm}{2m}}}\right]\\
  \leq &\frac{2(1-m)}{\eta}\left[ \frac{|\tilde\alpha |\eta^{2-m}}{n-2-2m} +\left\{ \frac{2|\tilde\alpha| \eta^{2-m } }{n-2-2m} + \frac{|\tilde\beta|( \eta+|\zeta|)}{m  (n-2\delta_1) }\left( \frac{3 \eta}{2}\right)^{1-m} \right\} {\ve^{\frac{n-2-nm}{2m}}}\right] \\
  &\qquad\cdot  \|g_1-g_2\|_{L^\infty([0,\ve])}\\
    \leq&  \frac{2(1-m)}{\eta}\left[ \frac{|\tilde\alpha| \eta^{2-m}}{n-2-2m} +\left\{ \frac{2|\tilde\alpha| \eta^{2-m } }{n-2-2m} + \frac{|\tilde\beta|( \eta+|\zeta|)}{m  (n-2\delta_1   ) }\left( \frac{3 \eta}{2}\right)^{1-m} \right\} {\ve^{\frac{n-2-nm}{2m}}}\right] \ve^{\delta_0}\delta
\end{aligned}
\end{equation}
and 
\begin{align}\label{eq-ex-g-m-large-I_2-contraction}
I_2 &\leq \delta\cdot \frac{  g_2^{1-m}(r)}{mr^{n-1- \delta_1}}\int_0^r \rho^{ n-3+ \frac{n-2-nm}{m}}\left\{ |\tilde\alpha|\rho^{\delta_0}+|\tilde\beta |\rho^{1-\delta_1} \right\}d\rho\notag\\
&\leq \left(\frac{3\eta}{2}\right)^{1-m}\cdot\frac{ |\tilde\alpha|+|\tilde\beta|}{n-2-2m} \ve^{\frac{n-2-nm}{2m}}  \delta. 
\end{align}
By \eqref{eq-ex-g-m-large-I_1-contraction} and  \eqref{eq-ex-g-m-large-I_2-contraction}, for any $\eta>0,$  there exists  sufficiently small  $\ve\in (0,1)$ such that  for $0< r\leq \ve$,  
\begin{equation*}
I_1\leq \frac{1}{4} \delta\quad\mbox{ and }\quad I_2\leq \frac{1}{4} \delta. 
\end{equation*}
Thus by choosing  sufficiently small $\ve\in(0,1),$  the map $\Phi$ is Lipschitz continuous on $\cD'_{\ve,\eta}$ with a Lipschitz constant which is less than $1/2$. Hence  by  the contraction map theorem there exists a unique fixed point $(g,h)=\Phi(g,h)$ in $ \cD'_{\ve,\eta}$. Then by an argument similar to the proof of Lemma \ref{lem-local-existence-inversion-m-small}, $g $ belongs to $C^{0, \delta_0}([0,\ve);\R)\cap C^2((0,\ve);\R)$ and
  satisfies  \eqref{eq-fde-inversion}  in $(0,\ve)$. By \eqref{eq-existence-Phi-contraction-m-large-unif.-conv}, 
\eqref{eq-initial-m-large} holds. 
      
Finally we observe that if  $\tilde g\in C^{0, \delta_0}([0,\ve);\R)\cap C^2((0,\ve);\R)$ is  a  solution of  \eqref{eq-fde-inversion}  in $(0,\ve)$ which satisfies \eqref{eq-initial-m-large},  then   $(\tilde g, \tilde g') \in \cD'_{\ve,\eta}$ for sufficiently small $\ve>0$.     
Then uniqueness of a solution of  \eqref{eq-fde-inversion}  in $(0,\ve)$ satisfying   \eqref{eq-initial-m-large} follows from the contraction map theorem. 
\end{proof} 

Now we are ready to  prove the global existence of radially symmetric singular  solution of \eqref{eq-inversion} in $\R^n\setminus\{0\}$ for $\tilde\al>0$ and $\tilde\be>0$. 

\begin{thm} \label{thm-existence-inversion}
Let $n\geq3,$ $0< m <\frac{n-2}{n} ,$    $\tilde \alpha>0,  \tilde\beta>0 $, $\frac{\tilde\alpha}{\tilde\beta}\leq \frac{n-2}{m}$ and $\eta>0$. 
\begin{enumerate}[(a)] 
\item If $0<m<\frac{n-2}{n+1}$, then there exists a unique solution $g\in C^1([0,\infty);\R)\cap C^2((0,\infty);\R)$  of \eqref{eq-fde-inversion}  in $(0,\infty)$ which satisfies \eqref{eq-initial-m-small}. 
\item If $\frac{n-2}{n+1}\leq m< \frac{n-2}{n}$, then there exists a unique solution $g\in C^{0, \delta_0}([0,\infty);\R)\cap C^2((0,\infty);\R)$ of \eqref{eq-fde-inversion}  in $(0,\infty)$ which satisfies \eqref{eq-initial-m-large} where $\delta_0$ and $\delta_1$ are given by \eqref{defn-delta0-1}.
\end{enumerate}
Moreover the function 
$$
w_1(r):=r^2g^{2\tilde k}(r)
$$   
satisfies $w_1'(r)>0$ for $r>0,$ where $\tilde k:=\tilde\beta/\tilde\alpha$. 
\end{thm}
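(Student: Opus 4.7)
My plan is to start from the local solution on some interval $[0,\ve)$ provided by Lemma \ref{lem-local-existence-inversion-m-small} in case (a) and by Lemma \ref{lem-local-existence-inversion-m-large} in case (b), and then continue it by standard ODE theory to the maximal interval $[0,R_{\max})$ on which $g$ remains positive and of class $C^2((0,R_{\max}))$. The central task is to prove a priori bounds that force $R_{\max}=\infty$; the monotonicity of $w_1$ will then follow in one line from Lemma \ref{lem-v-derivative}, and uniqueness will reduce to the uniqueness built into the local lemmas together with the Picard--Lindel\"of theorem on the regular region where $r>0$ and $g>0$.

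To derive the a priori bounds I would first apply Lemma \ref{lem-v-derivative} on each subinterval $[0,R_0)\subset[0,R_{\max})$ to obtain $g'<0$ and $g+\tilde k r g'>0$ on $(0,R_{\max})$. The monotonicity of $g$ together with $g(0)=\eta$ gives the upper bound $g\le\eta$, and the cone inequality directly gives $|g'(r)|\le g(r)/(\tilde k r)\le\eta/(\tilde k r)$. Since
\[
\bigl(r^{1/\tilde k}g(r)\bigr)'=\frac{1}{\tilde k}\,r^{1/\tilde k-1}\bigl(g+\tilde k r g'\bigr),
\]
the same cone inequality shows $r^{1/\tilde k}g(r)$ is strictly increasing on $(0,R_{\max})$. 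Fixing any $r_0\in(0,\ve)$, this yields $g(r)\ge r_0^{1/\tilde k}\,g(r_0)\,r^{-1/\tilde k}$ for all $r\in[r_0,R_{\max})$.

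Suppose toward contradiction that $R_{\max}<\infty$. Then $g$ stays in a compact interval $[c_0,\eta]\subset(0,\infty)$ on $[r_0,R_{\max}]$, and $|g'|$ is bounded there by $\eta/(\tilde k r_0)$. Rewriting \eqref{eq-fde-inversion} as
\[
g''=-(m-1)\frac{(g')^2}{g}-\frac{n-1}{r}g'-\frac{1}{m}\,r^{\frac{n-2-nm}{m}-2}\,g^{1-m}\bigl(\tilde\alpha g+\tilde\beta r g'\bigr),
\]
we see that the right-hand side is uniformly bounded on $[r_0,R_{\max}]$. Therefore $g(r)$ and $g'(r)$ admit finite limits as $r\to R_{\max}^-$, and the solution can be continued past $R_{\max}$ by local existence for the regular ODE at the point $R_{\max}$, contradicting the definition of $R_{\max}$. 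Thus $R_{\max}=\infty$. The monotonicity of $w_1$ then follows from the computation $w_1'(r)=2r\,g^{2\tilde k-1}(r)\bigl(g(r)+\tilde k r g'(r)\bigr)$, which is positive on $(0,\infty)$ by Lemma \ref{lem-v-derivative}.

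The step I expect to be most delicate is producing the positive lower bound for $g$ on bounded intervals away from $0$, which is what prevents the equation \eqref{eq-fde-inversion} from degenerating at a finite value of $r$. The key observation that unlocks the argument is that the cone inequality $g+\tilde k r g'>0$ from Lemma \ref{lem-v-derivative} is precisely equivalent to the monotonicity of $r^{1/\tilde k}g(r)$, which then furnishes such a lower bound essentially for free.
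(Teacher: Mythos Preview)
Your proof is correct and follows essentially the same approach as the paper: local existence from Lemmas \ref{lem-local-existence-inversion-m-small}/\ref{lem-local-existence-inversion-m-large}, then Lemma \ref{lem-v-derivative} to obtain $g'<0$ and $g+\tilde k r g'>0$, and finally the monotonicity of $r^{1/\tilde k}g(r)$ (equivalently, of $w_1=(r^{1/\tilde k}g)^{2\tilde k}$) to produce a positive lower bound on $g$ and rule out $R_{\max}<\infty$. Your continuation argument via an explicit $g''$ bound is slightly more detailed than the paper's, which simply lists the three possible blow-up scenarios and excludes each, but the substance is the same.
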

\begin{proof}  
We first consider the case $0<m<\frac{n-2}{n+1}.$ By Lemma \ref{lem-local-existence-inversion-m-small} there exits a unique  solution $g\in C^1([0,\ve);\R)\cap C^2((0,\ve);\R)$ of  \eqref{eq-fde-inversion} in $(0,\ve)$ for some $\ve>0$, which satisfies   \eqref{eq-initial-m-small}.  Let $(0,R_0)$ be the maximal interval of   existence of solution $g\in C^1([0,R_0);\R)\cap C^2((0,R_0);\R)$ of  \eqref{eq-fde-inversion}  satisfying  \eqref{eq-initial-m-small}.  We claim that $R_0=\infty. $ Suppose to the contrary  that $R_0<\infty. $ Then there is a sequence $\{r_i\}_{i=1}^\infty$  such that $r_i<R_0$, $r_i\nearrow R_0$ as $i\to\infty$, and either   
\begin{equation*}
|g'(r_i)|\to\infty\quad\mbox{ as }i\to\infty
\end{equation*} 
or
\begin{equation*}
g(r_i)\to 0\quad\mbox{ as }i\to\infty
\end{equation*}
or
\begin{equation*}
g(r_i)\to\infty\quad\mbox{ as }i\to\infty
\end{equation*}
holds. By Lemma \ref{lem-v-derivative},
\begin{align*}
&g'(r)<0 \quad\mbox{and }\quad w_1'(r)=2r g^{2\tilde k-1}(r)(g(r)+\tilde kr g'(r))>0\quad\forall r\in(0,R_0)\\
\Rightarrow\quad&r^2g^{2\tilde k}(r)= w_1(r)> w_1(R_0/2)>0\qquad\qquad\qquad\forall r\in(R_0/2,R_0).
\end{align*}
Hence
$$  0<\left\{ R_0^{-2} w_1(R_0/2) \right\} ^{1/(2\tilde k)} <   g(r) <g(0)=\eta\quad\forall r\in (R_0/2, R_0).$$  
Using   Lemma  \ref{lem-v-derivative} again, we have 
$$ 
-\frac{2g(0)}{R_0}< -\frac{g(r)}{r}< \tilde k g'(r)<0\quad\forall r\in (R_0/2, R_0).
$$  
Thus contradiction arises. Hence we conclude that $R_0=\infty$. Hence there exists a unique solution $g\in C^1([0,\infty);\R)\cap C^2((0,\infty);\R)$ of  \eqref{eq-fde-inversion}   in $(0,\infty)$ satisfying   \eqref{eq-initial-m-small}. From  Lemma \ref{lem-v-derivative}, it follows  that $w'_1(r)>0$ for any $r>0.$ 

When $\frac{n-2}{n+1}\leq m<\frac{n-2}{n},$  since \eqref{eq-initial-m-large} implies  \eqref{eq-initial},  a  similar argument  as above using Lemma \ref{lem-v-derivative} and Lemma   \ref{lem-local-existence-inversion-m-large}    implies   the existence and uniqueness of  a   global  solution $g\in C^{0, \delta_0}([0,\infty);\R)\cap C^2((0,\infty);\R)$ of  \eqref{eq-fde-inversion} in  $(0,\infty)$, which satisfies   \eqref{eq-initial-m-large} and $w'_1(r)>0$ for any $r>0.$
 \end{proof}

Under the assumption that  $0<\frac{\tilde\alpha}{\tilde\beta}<\frac{n-2-nm}{m\,(1-m)},$ we will now prove  the decay rate  of the  solution of   \eqref{eq-fde-inversion} in $(0,\infty)$ which satisfies  \eqref{eq-initial}  as $r\to\infty$.

\begin{prop}\label{prop-behavior-infinity-inversion}
Let $n\geq3,$ $0< m <\frac{n-2}{n} ,$    $\tilde \alpha>0,\tilde\beta>0 $ and  $\frac{\tilde\alpha}{\tilde\beta}<\frac{n-2-nm}{m\,(1-m)}.$ 
Let  $g\in C([0,\infty);\R)\cap C^2((0,\infty);\R)$ be a solution of  \eqref{eq-fde-inversion}  in $(0,\infty)$  satisfying  \eqref{eq-initial}. Then there exists a constant $A>0$ such that \eqref{eq-fde-scaled-at-origin-inv} holds.
\end{prop}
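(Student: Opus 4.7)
The plan is to prove that $v(r) := r^{\gamma}g(r)$, where $\gamma := \tilde\alpha/\tilde\beta$, tends to a positive finite constant as $r \to \infty$. Lemma \ref{lem-v-derivative} gives $g + \tilde k r g' > 0$, which translates directly into
$$
v'(r) = \gamma r^{\gamma-1}\bigl(g + \tilde k r g'\bigr) > 0 \qquad \text{for all } r > 0,
$$
so $v$ is strictly increasing and $v(r) \nearrow A \in (0,\infty]$, with $A > 0$ automatic from $A \geq v(r_0) > 0$ for any fixed $r_0$. The entire difficulty lies in showing $A < \infty$.

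For this I would recast \eqref{eq-fde-inversion} as an autonomous planar dynamical system. Set $s := \log r$ and $\phi(s) := v(e^s)$, so that $g(r) = r^{-\gamma}\phi(\log r)$; a direct computation converts \eqref{eq-fde-inversion} into
$$
\bigl(\phi^{m-1}\Psi\bigr)_s + (n-2-\gamma m)\,\phi^{m-1}\Psi + \frac{\tilde\beta}{m}\,e^{\delta s}\phi_s = 0, \qquad \Psi := \phi_s - \gamma\phi,
$$
where $\delta := (n-2-nm)/m - \gamma(1-m) > 0$ by hypothesis and $n - 2 - \gamma m > 0$ by \eqref{eq-tilde-alpha-beta-1}, while Lemma \ref{lem-v-derivative} yields $0 < \phi_s < \gamma\phi$. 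Setting $P := \phi_s/\phi \in (0,\gamma)$ and $Z := \phi^{1-m}e^{\delta s} > 0$ to eliminate the explicit $s$-dependence, a short calculation gives the autonomous system
$$
P_s = -mP^2 + \Bigl[2\gamma m - n + 2 - \tfrac{\tilde\beta}{m} Z\Bigr]P + \gamma(n-2-\gamma m), \qquad Z_s = Z\bigl[(1-m)P + \delta\bigr].
$$

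I would finish by a phase-plane analysis. The strip $\{0 < P < \gamma\}$ is forward invariant: on $P = 0$, $P_s = \gamma(n-2-\gamma m) > 0$, and on $P = \gamma$ the other terms telescope to $P_s = -(\tilde\beta\gamma/m) Z < 0$. Forward invariance gives $Z_s \geq \delta Z$, so $Z(s) \geq Z(s_0) e^{\delta(s-s_0)} \to \infty$ exponentially. The coercive drift $-(\tilde\beta/m)ZP$ then traps $P$: whenever $P(s) \geq \ve$, $P_s \leq C(m,n,\gamma) - (\tilde\beta\ve/m) Z$, which becomes arbitrarily negative for $s$ large and would force $P$ through $0$ in contradiction to the invariance. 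Hence $P(s) \to 0$, and a standard trapping argument along the attracting nullcline $P_{\mathrm{null}}(Z) := \gamma(n-2-\gamma m) m /(\tilde\beta Z)$ refines this to $P(s) = O(e^{-\delta s})$. Since $(\log\phi)_s = P$, this gives $P \in L^1([s_0,\infty))$, so $\phi(s) \to A \in (0,\infty)$, establishing \eqref{eq-fde-scaled-at-origin-inv}.

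The main obstacle is the quantitative trapping: upgrading the qualitative $P(s) \to 0$ to the decay rate $P(s) = O(e^{-\delta s})$ needed to integrate $P$ at infinity. This is handled by exploiting that the linear coefficient $(\tilde\beta/m)Z$ in the $P$-equation grows to infinity, making the attraction toward $P_{\mathrm{null}}$ strictly faster than the rate $\delta$ at which $P_{\mathrm{null}}$ itself decays.
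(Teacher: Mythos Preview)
Your argument is correct and complete in outline; the computations of the autonomous system in $(P,Z)$ check out, the strip $\{0<P<\gamma\}$ is indeed forward invariant, and the trapping step can be made rigorous by showing $W:=ZP$ is eventually bounded (since for $s$ large $W_s\le -\frac{\tilde\beta}{2m}ZW+c_0Z$, so $\{W\le 4mc_0/\tilde\beta\}$ is absorbing), whence $P=O(1/Z)=O(e^{-\delta s})$ and $\int P\,ds<\infty$.

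The paper takes a different route. Writing $\tilde q=v$ and working directly in the variable $r$, it derives the first-order ODE \eqref{eq-q-q'} for $\tilde q'/\tilde q$ (your $P/r$), multiplies by the explicit integrating factor $r^{n-1-2m\gamma}\tilde q^m h_2(r)$ with $h_2(r)=\exp\bigl(\frac{\tilde\beta}{m}\int_1^r\rho^{\delta-1}\tilde q^{1-m}\,d\rho\bigr)$, and then applies L'H\^opital's rule to the resulting integral representation to obtain $r^p\tilde q'/\tilde q\to0$ for every $1<p<1+\delta$; integrability of $\tilde q'/\tilde q$ follows. Your phase-plane analysis and the paper's integrating-factor method are two sides of the same coin: both extract the decay $P=O(e^{-\delta s})$ (equivalently $\tilde q'/\tilde q=O(r^{-1-\delta'})$), and the integrating factor $h_2$ is precisely what encodes your variable $Z=\phi^{1-m}e^{\delta s}$. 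The paper's approach is shorter and entirely computational, needing no invariant-region or trapping language; your dynamical reformulation is more geometric and makes transparent why the decay rate is governed by $\delta$---namely, the attraction toward the nullcline $P_{\mathrm{null}}\sim c/Z$ is at rate $(\tilde\beta/m)Z\to\infty$, which dominates the drift rate $\delta$ of $P_{\mathrm{null}}$ itself.
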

\begin{proof}
We will use a modification of the proof of  Theorem 1.6 of \cite{Hs2} to prove this proposition. Let 
\begin{equation}\label{eq-def-tilde-q}
\tilde q(r):=r^{ \frac{\tilde\alpha}{\tilde \beta}}g(r),\quad\mbox{ and} \quad\tilde k=\frac{\tilde\beta}{\tilde\al}.
\end{equation} 
Note that $\frac{n-2-nm}{m(1-m)}<\frac{n-2}{m}.$  According to Lemma \ref{lem-v-derivative}, 
\begin{equation*}
\tilde  q'(r)=\frac{\tilde \alpha}{\tilde\beta}r^{\frac{\tilde\alpha}{\tilde\beta}-1}\left\{g(r)+\tilde k r g'(r)\right\}>0,\quad \forall r>0.
\end{equation*}
By direct computation, 
\begin{equation}\label{eq-q-q'}
\left(\frac{\tilde q'}{\tilde q}\right)'+ \frac{1}{r}\left(n-1-\frac{2m\tilde\alpha}{\tilde\beta}\right)\frac{\tilde q'}{\tilde q}+m\left(\frac{\tilde q'}{\tilde q}\right)^2+\frac{\tilde \beta }{m}\,r^{\frac{n-2-nm}{m}-(1-m)\frac{\tilde\alpha}{\tilde\beta}-1}\frac{\tilde q'}{\tilde q^m}=\frac{\tilde\alpha}{\tilde\beta}\cdot\frac{n-2-(m/\tilde k)}{r^2}.
\end{equation}
Let $$
h_2(r)=\exp\left( \frac{\tilde \beta}{m}\int_1^r \rho^{\frac{n-2-nm}{m}-(1-m)\frac{\tilde\alpha}{\tilde\beta}-1} \tilde q^{1-m}(\rho)\,d\rho\right)\quad\forall r>1.
$$
Then $h'_2(r)=\frac{\tilde\beta}{m} r^{\frac{n-2-nm}{m}-(1-m)\frac{\tilde\alpha}{\tilde\beta}-1 }\tilde q^{1-m}(r)h_2(r)$ and 
\begin{align*}
h_2(r)&\geq \exp\left( \frac{\tilde \beta\, \tilde q^{1-m}(1)}{m}\int_1^r \rho^{\frac{n-2-nm}{m}  -(1-m)\frac{\tilde\alpha}{\tilde\beta} -1}  d\rho\right)\\
&=\exp\left( \frac{\tilde \beta \, \tilde q^{1-m}(1)(r^{\frac{n-2-nm}{m}-(1-m)\frac{\tilde\alpha}{\tilde\beta} }-1) \,}{{n-2-nm}-m(1-m)  \frac{\tilde\alpha}{\tilde\beta}}\right) \quad\forall r>1
\end{align*}
which diverges   exponentially to infinity as  $r\to\infty$ since $\tilde \beta>0$ and 
$ \frac{\tilde\alpha}{\tilde\beta}<\frac{n-2-nm}{m(1-m)}.$ 
Let $c_1:= \tilde q^{m-1}(1) \tilde q'(1)h_2(1),$ and $c_2:=\frac{\tilde \alpha}{\tilde\beta}(n-2-(m/\tilde k)).$ Then $c_2>0.$  
Multiplying  \eqref{eq-q-q'} by $r^{n-1-\frac{2m\tilde\alpha}{\tilde\beta}} \,\tilde q^m(r)h_2(r)$ and integrating over $(1,r)$, we have
$$
r^{n-1-\frac{2m\tilde\alpha}{\tilde\beta}}\, \tilde q^m(r)h_2(r)\frac{\tilde q'(r)}{\tilde q(r)}= c_1+c_2\int_1^r\rho^{n-3-\frac{2m \tilde\alpha}{\tilde\beta}}\, \tilde q^m(\rho)h_2(\rho)d\rho,\quad\forall r>1.
$$
Let $p>0$ be a constant to be chosen later. By  the l'Hospital rule,  
\begin{align}
\limsup_{r\to\infty}r^{p}\frac{\tilde q'(r)}{\tilde q(r)} = &\limsup_{r\to\infty}\frac{c_1+c_2\int_1^r\rho^{n-3-\frac{2m \tilde\alpha}{\tilde\beta}} \tilde q^m(\rho)h_2(\rho)d\rho}{r^{n-p-1-\frac{2m\tilde\alpha}{\tilde\beta}} \tilde q^m(r)h_2(r)}\notag\\
\leq&\limsup_{r\to\infty}\frac{ c_2r^{n-3-\frac{2m \tilde\alpha}{\tilde\beta}} \tilde q^m(r)h_2(r)}{F(r)}\label{eq-tidle-q'-ineqn}
\end{align}
where 
\begin{align}
F(r)=&\frac{d}{dr }\left\{r^{n-p-1-\frac{2m\tilde\alpha}{\tilde\beta}} \tilde q^m(r)h_2(r)\right\}\notag\\
=&\left(n-p-1- \frac{2m\tilde\alpha}{\tilde\beta}\right)r^{n-p-2-\frac{2m\tilde\alpha}{\tilde\beta}} \tilde q^m(r)h_2(r) + mr^{n-p-1-\frac{2m\tilde\alpha}{\tilde\beta}} \tilde q^{m-1}(r) \tilde q'(r)h_2(r)\notag\\
&\qquad+r^{n-p-1-\frac{2m\tilde\alpha}{\tilde\beta}} \tilde q^m(r)h'_2(r)\notag\\
\geq&\left(n-p-1-\frac{2m\tilde\alpha}{\tilde\beta}\right)r^{n-p-2-\frac{2m\tilde\alpha}{\tilde\beta}} \tilde q^m(r)h_2(r) +\frac{\tilde\beta}{m}r^{n-p-2+\frac{n-2-nm}{m}-(1+m)\frac{\tilde\alpha}{\tilde\beta}} \tilde q(r)h_2(r).\label{eq-f-lower-bd}
\end{align}
Let $c_0:= \left(n-p-1- \frac{2m\tilde\alpha}{\tilde\beta}\right).$ By \eqref{eq-tidle-q'-ineqn} and \eqref{eq-f-lower-bd}, 
\begin{align}
0\leq \limsup_{r\to\infty}\frac{ r^{p} \tilde q'(r)}{\tilde q(r)}&\leq \limsup_{r\to\infty}\frac{c_2}{c_0r^{1-p}+(\tilde\beta/m)r^{1-p+\frac{n-2-nm}{m}  -(1-m)\frac{\tilde\alpha}{\tilde\beta}  } \tilde q^{1-m}(r)}\notag\\
&\leq \limsup_{r\to\infty}\frac{c_2}{c_0r^{1-p}+(\tilde\beta/m)r^{1-p+\frac{n-2-nm}{m}  -(1-m)\frac{\tilde\alpha}{\tilde\beta} } \tilde q^{1-m}(1)}. 
\label{eq-limt-rq_r/q} 
\end{align}
Since $ \frac{\tilde\alpha}{\tilde\beta}< \frac{n-2-nm}{m\,(1-m) }$,  
$$
1+\frac{n-2-nm}{m}-(1-m)\frac{\tilde\alpha}{\tilde\beta} >1.
$$ 
Hence it follows from  \eqref{eq-limt-rq_r/q} that 
\begin{equation}\label{eq-r-q_r-limit}
\lim_{r\to\infty}\frac{r^{p} \tilde q'(r)}{\tilde q(r)} =0\quad \forall  1<p<1+ \frac{n-2-nm}{m} -(1-m)\frac{\tilde\alpha}{\tilde\beta}.
\end{equation}
Let $p_0:=1+\frac{1}{2}\left(\frac{n-2-nm}{m}-(1-m)\frac{\tilde\alpha}{\tilde\beta}\right)$. Then by \eqref{eq-r-q_r-limit},
$$
|\log \tilde q(r)-\log \tilde q(1)|\leq C_1\int_1^r \rho^{-p_0}d\rho\leq C_2,\quad\forall r>1
$$
for some constants $C_1>0$, $C_2>0. $ Hence
$$
\tilde q(1)\leq  \tilde q(r)\leq e^{C_2}\, \tilde q(1),\quad\forall r\geq1.
$$ 
Then the monotonicity of $\tilde q$    implies that  $\displaystyle\lim_{r\to\infty} \tilde q(r)=A$ for some constant $A>0$ and the proposition follows.  
\end{proof}

\begin{cor}\label{cor-behavior-infinity-inversion}
 Let $n\geq3,$ $0< m <\frac{n-2}{n} ,$    $\tilde \alpha>0,\tilde\beta>0 $,  $\frac{\tilde\alpha}{\tilde\beta}<\frac{n-2-nm}{m\,(1-m)}$ and $\eta>0.$  
 Let $g $ be the solution of  \eqref{eq-fde-inversion}  in $(0,\infty)$ given by Theorem \ref{thm-existence-inversion} which satisfies 
\begin{equation}\label{eq-g-class}
\left\{\begin{aligned}
&g\in C^1([0,\infty);\R)\cap C^2((0,\infty);\R)\quad\quad&\mbox{ if }\quad 0<m<\frac{n-2}{n+1}\\
&g\in C^{0, \delta_0}([0,\infty);\R)\cap C^2((0,\infty);\R)\quad&\mbox{ if }\quad\frac{n-2}{n+1}\leq m<\frac{n-2}{n}\end{aligned}\right. 
\end{equation}
and \begin{equation}\label{eq-g(0)-g'(0)}
\left\{\begin{aligned}
&g(0)=\eta,\quad   g_r(0)=0 &\quad\mbox{ if  } \quad 0<m<\frac{n-2}{n+1}\\
&g(0)=\eta,\quad \lim_{r\to 0^+}r^{\delta_1} g_r(r)=-\frac{\tilde\alpha \eta^{2-m} }{n-2-2m}&\quad\mbox{if } \quad \frac{n-2}{n+1}\leq m<\frac{n-2}{n} 
\end{aligned}\right.
\end{equation}
 where $\delta_0$ and $\delta_1$ are given by  \eqref{defn-delta0-1}.  Then there exists a constant $A>0$ such that \eqref{eq-fde-scaled-at-origin-inv} holds.
\end{cor}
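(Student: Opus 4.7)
The plan is to treat Corollary \ref{cor-behavior-infinity-inversion} as an immediate consequence of Proposition \ref{prop-behavior-infinity-inversion}, so the only work is to verify that the solution $g$ produced by Theorem \ref{thm-existence-inversion} satisfies the hypotheses of that proposition, namely $g\in C([0,\infty);\R)\cap C^2((0,\infty);\R)$ together with the boundary behavior $g(0)=\eta$ and $\lim_{r\to 0^+} r g_r(r)=0$. The continuity and interior regularity are built into \eqref{eq-g-class} in both regimes, and $g(0)=\eta$ is part of \eqref{eq-g(0)-g'(0)}, so the only nontrivial point is the vanishing of $rg_r(r)$ at the origin.

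I would handle the two ranges of $m$ separately. If $0<m<\frac{n-2}{n+1}$, then \eqref{eq-g(0)-g'(0)} gives $g\in C^1([0,\infty);\R)$ with $g_r(0)=0$, so by continuity $rg_r(r)\to 0$ as $r\to 0^+$ trivially. If $\frac{n-2}{n+1}\le m<\frac{n-2}{n}$, I would factor
\[
rg_r(r)=r^{1-\delta_1}\cdot r^{\delta_1}g_r(r),
\]
and observe that \eqref{defn-delta0-1} yields $1-\delta_1=\frac{n-2-nm}{m}>0$, so $r^{1-\delta_1}\to 0$, while by \eqref{eq-g(0)-g'(0)} the factor $r^{\delta_1}g_r(r)$ tends to the finite constant $-\tilde\alpha\eta^{2-m}/(n-2-2m)$. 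Hence the product again tends to $0$, and condition \eqref{eq-initial} is verified.

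With the hypotheses of Proposition \ref{prop-behavior-infinity-inversion} in hand (note that $\tilde\alpha,\tilde\beta>0$ and $\tilde\alpha/\tilde\beta<(n-2-nm)/(m(1-m))$ are assumed), I would simply invoke that proposition to obtain a constant $A>0$ such that \eqref{eq-fde-scaled-at-origin-inv} holds, which is precisely the claim of the corollary. There is no substantial obstacle in this argument; the only subtle point is the factorization $rg_r=r^{1-\delta_1}\cdot r^{\delta_1}g_r$ in the second case, which converts the weighted derivative control at $0$ supplied by Lemma \ref{lem-local-existence-inversion-m-large} into the unweighted vanishing required by Proposition \ref{prop-behavior-infinity-inversion}.
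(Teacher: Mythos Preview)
Your proposal is correct and follows exactly the paper's approach: the paper's proof is the single line ``The result follows from Proposition~\ref{prop-behavior-infinity-inversion} since \eqref{eq-g(0)-g'(0)} implies \eqref{eq-initial},'' and you have simply unpacked that implication, including the factorization $rg_r(r)=r^{1-\delta_1}\cdot r^{\delta_1}g_r(r)$ in the case $\frac{n-2}{n+1}\le m<\frac{n-2}{n}$, which is precisely the missing detail.
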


 \begin{proof} 
The result  follows from Proposition \ref{prop-behavior-infinity-inversion} since \eqref{eq-g(0)-g'(0)} implies  \eqref{eq-initial}. 
 \end{proof}
\section{Singular self-similar profiles}\label{sec-self-similar profiles} 
\setcounter{equation}{0}
\setcounter{thm}{0}

In this section we will use the inversion formula \eqref{eq-def-g-from-f} to prove the existence of radially symmetric solution of \eqref{eq-fde-scaled} in $\R^n\setminus\{0\}$ which has singular behavior at the origin of the form \eqref{eq-sol-behavior-origin} and decreases to zero at infinity. 

\begin{lemma}[Existence] \label{lem-existence-self-similar}
Let  $n\geq3,$ $0< m < \frac{n-2}{n} $. Suppose $\alpha$, $\beta$, $\rho_1$ satisfy \eqref{eq-alpha-beta-neg-condition}.  
Then for any $A>0$ there exists a  radially symmetric  solution $f$ of \eqref{eq-fde-scaled}   in $\R^n\setminus\{0\}$ 
which satisfies  \eqref{eq-sol-behavior-origin} and   \eqref{eq-sol-behavior-infty} for some   constant  $D_A>0$  depending on $A$. Moreover \eqref{eq-hess-f^m} holds.
\end{lemma}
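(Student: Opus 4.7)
The plan is to reduce the problem to the inversion problem that was already solved in Section~\ref{sec-inversion} and then to use the scaling invariance of \eqref{eq-fde-scaled} to sweep through all admissible values of $A>0$.

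Under assumption \eqref{eq-alpha-beta-neg-condition}, by \eqref{eq-tilde-alpha-beta-0}--\eqref{eq-tilde-alpha-beta-1} the dual parameters $\tilde\alpha,\tilde\beta$ are positive and $\tilde\alpha/\tilde\beta<(n-2-nm)/(m(1-m))$. I will first apply Theorem~\ref{thm-existence-inversion} with initial datum $\eta=1$ to obtain a global radially symmetric solution $g_1$ of \eqref{eq-fde-inversion} on $(0,\infty)$ with $g_1(0)=1$ (and with the appropriate derivative condition at $0$ coming from \eqref{eq-initial-m-small} or \eqref{eq-initial-m-large}); Corollary~\ref{cor-behavior-infinity-inversion} then furnishes a constant $A_0>0$ with $\lim_{r\to\infty}r^{\tilde\alpha/\tilde\beta}g_1(r)=A_0$. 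Undoing the inversion via \eqref{eq-def-g-from-f} by setting $f_1(x):=|x|^{-(n-2)/m}g_1(|x|^{-1})$ produces a radially symmetric solution of \eqref{eq-fde-scaled} on $\R^n\setminus\{0\}$, and using $\tilde\alpha/\tilde\beta=(n-2)/m-\alpha/\beta$ together with the behaviour of $g_1$ at $\infty$ and at $0$ gives
\[
\lim_{|x|\to0}|x|^{\alpha/\beta}f_1(x)=A_0,\qquad \lim_{|x|\to\infty}|x|^{(n-2)/m}f_1(x)=g_1(0)=1.
\]

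To realize an arbitrary $A>0$, I will use the scaling $f_\lambda(x):=\lambda^{2/(1-m)}f_1(\lambda x)$, which preserves \eqref{eq-fde-scaled} since $(m-1)\cdot\tfrac{2}{1-m}+2=0$. As in the computation of Remark~\ref{self-similar-soln-scaling-property-rmk},
\[
\lim_{|x|\to 0}|x|^{\alpha/\beta}f_\lambda(x)=\lambda^{\frac{2}{1-m}-\frac{\alpha}{\beta}}A_0,\qquad\lim_{|x|\to\infty}|x|^{(n-2)/m}f_\lambda(x)=\lambda^{\frac{2}{1-m}-\frac{n-2}{m}}.
\]
The hypothesis $\tfrac{2}{1-m}<\tfrac{\alpha}{\beta}$ in \eqref{eq-alpha-beta-neg-condition} forces the exponent $\tfrac{2}{1-m}-\tfrac{\alpha}{\beta}$ to be strictly negative and in particular nonzero, so $\lambda\mapsto\lambda^{\frac{2}{1-m}-\alpha/\beta}A_0$ is a bijection of $(0,\infty)$ onto itself. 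I will therefore choose the unique $\lambda=(A/A_0)^{1/(\frac{2}{1-m}-\frac{\alpha}{\beta})}$ for which $f:=f_\lambda$ obeys \eqref{eq-sol-behavior-origin}; \eqref{eq-sol-behavior-infty} then holds with $D_A=\lambda^{2/(1-m)-(n-2)/m}>0$.

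The final task is \eqref{eq-hess-f^m}, which via \eqref{eq-fde-scaled} amounts to $\alpha f+\beta x\cdot\nabla f>0$ on $\R^n\setminus\{0\}$. Writing $r=|x|$, $s=1/r$, and $g(s):=s^{-(n-2)/m}f(s^{-1})$ for the inversion of the solution $f$ just built (which again satisfies \eqref{eq-fde-inversion}), a direct differentiation of $f(r)=r^{-(n-2)/m}g(1/r)$ recasts $\alpha f(r)+\beta rf'(r)$ as $s^{(n-2)/m}\bigl[\tilde\alpha g(s)+\tilde\beta s g'(s)\bigr]=\tilde\alpha s^{(n-2)/m}\bigl[g(s)+\tilde k s g'(s)\bigr]$, using $\tilde\alpha=\alpha-\tfrac{n-2}{m}\beta$, $\tilde\beta=-\beta$ and $\tilde k=\tilde\beta/\tilde\alpha$. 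Since $\tilde\alpha>0$, the first conclusion of Lemma~\ref{lem-v-derivative} yields the required positivity. The step I expect to require the most care is not any individual computation but keeping track of the chain of exponent inequalities $\tfrac{2}{1-m}<\tfrac{\alpha}{\beta}<\tfrac{n-2}{m}$, on which the non-degeneracy of the scaling bijection, the sign of $\Delta f^m$, and the hypotheses of the inversion theorems all simultaneously depend.
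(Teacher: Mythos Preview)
Your proof is correct and follows essentially the same route as the paper: construct $g_1$ via Theorem~\ref{thm-existence-inversion} with $\eta=1$, invoke Corollary~\ref{cor-behavior-infinity-inversion} for the decay constant $A_0$, invert to get $f_1$, rescale by $\lambda=(A/A_0)^{1/(\frac{2}{1-m}-\frac{\alpha}{\beta})}$, and finally obtain \eqref{eq-hess-f^m} from Lemma~\ref{lem-v-derivative} via the inversion identity $\alpha f+\beta r f'=s^{(n-2)/m}\tilde\alpha\,[g(s)+\tilde k s g'(s)]$. One small point: you apply Lemma~\ref{lem-v-derivative} to the inversion $g$ of the already-rescaled $f=f_\lambda$, which requires knowing $\lim_{s\to0^+}s g'(s)=0$; the cleanest way to justify this (and what the paper effectively does) is to run the positivity argument for $f_1$ and $g_1$---where the initial conditions of Lemma~\ref{lem-v-derivative} are guaranteed by Theorem~\ref{thm-existence-inversion}---and then observe that $\alpha f_\lambda(r)+\beta r f_\lambda'(r)=\lambda^{2/(1-m)}\bigl[\alpha f_1(\lambda r)+\beta(\lambda r)f_1'(\lambda r)\bigr]$ inherits the sign.
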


\begin{proof}
Let  $\tilde \alpha$ and $\tilde\beta$ be given by  \eqref{eq-tilde-alpha-beta}.  Then \eqref{eq-tilde-alpha-beta-0}   and \eqref {eq-tilde-alpha-beta-1} hold.
By Theorem \ref{thm-existence-inversion}, there exists a unique radially symmetric  solution $g$  of \eqref{eq-inversion} in  $\R^n\setminus\{0\}$ which satisfies \eqref{eq-g-class} and \eqref{eq-g(0)-g'(0)} 
with $\eta=1$, where $r=|x|$ and $\delta_0, \delta_1$ are  given by \eqref{defn-delta0-1}.
It follows from  Corollary \ref{cor-behavior-infinity-inversion}  that 
\begin{equation}\label{eq-g-infty}
\lim_{r\to\infty} r^{\frac{\tilde\alpha}{\tilde\beta}} g(r)=A_0
\end{equation}
for some  constant $A_0>0.$ Let
$$
f(r):=r^{-\frac{n-2}{m}}g(r^{-1}),\quad r=|x|>0.
$$ 
By \eqref{eq-inversion}, \eqref{eq-g(0)-g'(0)} and \eqref{eq-g-infty}, $f$ is a radially symmetric solution to \eqref{eq-fde-scaled} in $\R^n\setminus\{0\}$ 
which satisfies  
\begin{equation*}
\lim_{r\to 0^+} r^{\frac{\alpha}{\beta}}f(r)= A_0,
\quad\mbox{and }\quad \lim_{r\to\infty} r^{\frac{n-2}{m}}f(r)= 1. 
\end{equation*} 
For any $\ld>0,$   let  
\begin{equation*}
\tilde{f}_{\ld}(x):= \ld^{\frac{2}{1-m}} f(\ld x).
\end{equation*}
Then $\tilde{f}_\ld$  satisfies \eqref{eq-fde-scaled} in $\R^n\setminus\{0\}$ with 
\begin{equation*}
\left\{\begin{aligned}
\lim_{r\to 0^+} r^{\frac{\alpha}{\beta}}\tilde{f}_\ld(r)&= \lim_{r\to 0^+} \ld^{\frac{2}{1-m}-\frac{\alpha}{\beta}}(\ld r)^{\frac{\alpha}{\beta}}f(\ld r)=\ld^{\frac{2}{1-m}-\frac{\alpha}{\beta}}A_0\\
\lim_{r\to\infty} r^{\frac{n-2}{m}}\tilde{f}_\ld(r)&= \lim_{r\to\infty} \ld^{\frac{2}{1-m}-\frac{n-2}{m}}(\ld r)^{\frac{n-2}{m}}f(\ld r)=\ld^{\frac{2}{1-m}-\frac{n-2}{m}} . 
\end{aligned}\right.
\end{equation*} 
For a given  $A>0,$ let $\ld:=(A/A_0)^{1/{(\frac{2}{1-m}-\frac{\alpha}{\beta})}}$.  Then $\tilde{f}_{\ld}$  satisfies \eqref{eq-sol-behavior-origin} and \eqref{eq-sol-behavior-infty} with $D_A=(A/A_0)^{(\frac{2}{1-m}-\frac{n-2}{m})/(\frac{2}{1-m}-\frac{\alpha}{\beta})}  $.  
By Lemma \ref{lem-v-derivative}, 
\begin{align*}
\alpha f(r)+ \beta r f_r(r)&=  \be \left\{ \frac{\alpha}{\beta}f(r)+ rf_r(r)\right\}\\
&=  \be  r^{-\frac{n-2}{m}} \left\{\left(\frac{\alpha}{\beta}-\frac{n-2}{m}\right) g(r^{-1})-  r^{-1} g' (r^{-1}) \right\}\\
&> \be  r^{-\frac{n-2}{m}} \left\{\left(\frac{\alpha}{\beta}-\frac{n-2}{m}\right) g(r^{-1}) +\frac{\tilde \al}{\tilde\be}g(r^{-1}) \right\}=0,\quad\forall r>0.
\end{align*} 
Hence  $\tilde{f}_{\ld}$ satisfies \eqref{eq-hess-f^m} and the lemma follows. 
\end{proof}
 
\begin{lemma}\label{lem-existence-self-similar-hess-sign}
Let  $n\geq3,$ $0<m<\frac{n-2}{n}$. Suppose $\alpha$, $\beta$, $\rho_1$ satisfy \eqref{eq-alpha-beta-neg-condition}.  
Let $f$ be a   radially symmetric  solution  of \eqref{eq-fde-scaled}  in     $\R^n\setminus\{0\}$ 
 satisfying
\begin{equation}\label{eq-sol-behavior-infty-equiv}
\lim_{|x|\to\infty}|x|^{\frac{n-2}{m}}f(x)= \eta 
\end{equation}
for some constant $\eta>0. $ Then $f$ satisfies \eqref{eq-hess-f^m}. 
\end{lemma}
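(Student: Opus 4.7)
The plan is to transfer the problem via inversion and reduce to the monotonicity conclusion of Lemma~\ref{lem-v-derivative}. I will set $g(r) := r^{-(n-2)/m} f(r^{-1})$ for $r > 0$; by the inversion identities opening Section~\ref{sec-inversion}, $g$ is a positive radial $C^2$ solution of \eqref{eq-inversion} on $\R^n\setminus\{0\}$ and hence satisfies the ODE \eqref{eq-fde-inversion} on $(0,\infty)$. The decay hypothesis \eqref{eq-sol-behavior-infty-equiv} translates to $\lim_{r \to 0^+} g(r) = \eta > 0$. Under \eqref{eq-alpha-beta-neg-condition}, the relations \eqref{eq-tilde-alpha-beta-0}--\eqref{eq-tilde-alpha-beta-1} give $\tilde\alpha,\tilde\beta > 0$, $\tilde k := \tilde\beta/\tilde\alpha > 0$ and $\tilde\alpha/\tilde\beta < (n-2)/m$. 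Once I establish $h_1(r) := g(r) + \tilde k\, r g'(r) > 0$ for every $r > 0$, the very same chain of identities appearing in the last display of the proof of Lemma~\ref{lem-existence-self-similar} (with $s = |x|$ and $r = s^{-1}$) will give $\alpha f(x) + \beta x \cdot \nabla f(x) > 0$ for all $x \neq 0$, which is exactly \eqref{eq-hess-f^m}.

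The inequality $h_1 > 0$ on $(0,\infty)$ is the crux, and the main obstacle is that the solution $g$ here is given only abstractly: I have no a priori regularity for $g$ at $r = 0$ beyond continuity, so in particular I cannot verify $\lim_{r \to 0^+} r g'(r) = 0$, and Lemma~\ref{lem-v-derivative} does not apply off the shelf. To bypass this I will first exhibit a sequence $r_n \searrow 0$ with $h_1(r_n) > 0$, arguing by contradiction: if $h_1 \leq 0$ on some interval $(0,\ve)$, then since $\tilde k > 0$,
\[
\bigl(r^{1/\tilde k}\, g(r)\bigr)' = \tilde k^{-1}\, r^{1/\tilde k - 1}\, h_1(r) \leq 0 \quad\text{on } (0,\ve),
\]
so $r \mapsto r^{1/\tilde k} g(r)$ is non-increasing on $(0,\ve)$; fixing any $r_* \in (0,\ve)$, this yields $g(r) \geq (r_*/r)^{1/\tilde k} g(r_*) \to +\infty$ as $r \to 0^+$, contradicting $g(r) \to \eta$.

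Having located $r_n \searrow 0$ with $h_1(r_n) > 0$, I will rerun the monotonicity step from the proof of Lemma~\ref{lem-v-derivative} on each interval $(r_n, R)$: the identity \eqref{eq-h_1}, multiplied by $r^{n-2-m/\tilde k} q(r)$ with the same integrating factor $q$ as there (with $r_n$ in place of $\ve$), yields $\bigl(r^{n-2-m/\tilde k} q(r) h_1(r)\bigr)' \geq 0$ on $(r_n, R)$, whence $h_1 > 0$ on $(r_n, R)$. Letting $R \to \infty$ and $r_n \to 0^+$ produces $h_1 > 0$ throughout $(0,\infty)$; the inversion computation recalled in the first paragraph then delivers \eqref{eq-hess-f^m}, completing the proof.
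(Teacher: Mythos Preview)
Your proof is correct and takes a genuinely different route from the paper's own argument. The paper works directly in the $f$-variable: it sets $q(r)=r^{\alpha/\beta}f(r)$, derives an ODE for $q'/q$ analogous to \eqref{eq-q-q'}, integrates with a suitable factor to obtain a strictly increasing function $Q(r)$, shows $Q$ is bounded above using the decay hypothesis \eqref{eq-sol-behavior-infty-equiv}, and then argues by contradiction that $\lim_{r\to\infty}Q(r)\le 0$ (a positive limit would force growth of $\log f$ incompatible with \eqref{eq-sol-behavior-infty-equiv}); strict monotonicity then gives $Q<0$ everywhere, i.e.\ $q'<0$, which unwinds to \eqref{eq-hess-f^m}.

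You instead pass to the inverted function $g$ and recycle the $h_1$-identity \eqref{eq-h_1} and its integrating factor from Lemma~\ref{lem-v-derivative} verbatim, so the only new work is seeding the propagation. Your seed argument is neat: the observation that $(r^{1/\tilde k}g)'=\tilde k^{-1}r^{1/\tilde k-1}h_1$ turns $h_1\le 0$ near the origin into $g(r)\gtrsim r^{-1/\tilde k}\to\infty$, contradicting $g\to\eta$. This is shorter and more elementary than the paper's limit analysis for $Q$, and it makes the argument modular by reducing to machinery already established in Section~\ref{sec-inversion}. The paper's approach, on the other hand, stays entirely in the original variable and yields as a by-product the explicit integral representation \eqref{defn-Q-eqn}, which is of independent use elsewhere. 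The two arguments are in fact dual under the inversion (your $h_1>0$ is equivalent to the paper's $q'<0$ via $\tilde q(1/r)=q(r)$), but the proofs themselves are structured quite differently.
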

\begin{proof}
Let $q(r):=r^{ \frac{ \alpha}{  \beta}}f(r)$, where $r=|x|>0.$ Then 
\begin{equation}\label{eq-q'-expression}
q'(r)=\frac{  \alpha}{ \beta} r^{\frac{ \alpha}{ \beta}-1}\left\{ f(r)+\frac{\be}{\al} r f'(r)\right\}\quad \forall r>0.
\end{equation} 
A direct computation implies 
\begin{equation}\label{eq-q-q'-equiv}
\begin{aligned}
&\left(\frac{q'}{q}\right)'+ \frac{1}{r}\left({n-1- \frac{2m \alpha}{ \beta}}\right)\frac{q'}{q}+m\left(\frac{q'}{q}\right)^2+\frac{  \beta \,}{m}r^{1  -(1-m) \frac{ \alpha}{ \beta} }\frac{q'}{q^m}=\frac{ \alpha}{ \beta}\cdot\frac{n-2- (m/k)}{r^2}
\end{aligned}
\end{equation}
where $k:=\be/\al.$ 
For any $0<\ve<1,$ let 
$$
h(r)=\exp\left( \frac{  \beta}{m}\int_\ve^r\rho^{1-(1-m) \frac{ \alpha}{ \beta} } q^{1-m}(\rho)d\rho\right)\quad\forall r>\ve.
$$
Since $\beta<0,$ $h(r)$ is a decreasing function of  $r>\ve$.  By   \eqref{eq-sol-behavior-infty-equiv}, there exists a constant $r_0>1$ such that 
\begin{align}
&\frac{\eta}{2}\leq r^{\frac{n-2}{m}}f(r)\leq 2\eta\qquad\qquad\,\,\,\forall r=|x|>r_0,\notag\\
\Rightarrow\quad&\frac{\eta}{2}r^{\frac{\al}{\be}-\frac{n-2}{m}}\leq q(r)\leq 2\eta r^{\frac{\al}{\be}-\frac{n-2}{m}}\quad\forall r>r_0.\label{eq-f-q-behavior-infty}
\end{align}
Hence
\begin{align}\label{eq-h-limit}
\lim_{r\to\infty }h(r)&\geq  \lim_{r\to\infty } \exp\left( \frac{  \beta }{m}\int_ \ve ^{r_0}\rho^{1-(1-m)\frac{ \alpha}{\beta}}q^{1-m}(\rho)d\rho+\frac{\beta}{m}(2\eta)^{1-m}\int_{r_0} ^{r} \rho^{1 - (1-m) \frac{ n-2}{ m} }  d\rho\right)\notag\\
&= C\exp\left( \frac{  \beta }{m}\int_ \ve ^{r_0}  \rho^{1  - (1-m) \frac{ \alpha}{ \beta}} q^{1-m}(\rho)d\rho\right)>0
\end{align}
for some constant $C>0$  since $2 - \frac{(1-m) ( n-2)}{ m}<0$. 

Let $c_1:=\ve^{n-1-\frac{2m\alpha}{ \beta}}q^{m-1}(\ve)q'(\ve),$ and $c_2:=\frac{  \alpha}{ \beta}\left(n-2-\frac{m}{  k}\right) .$ Note that $c_2>0.$  
We  multiply  \eqref{eq-q-q'-equiv} by $r^{n-1-\frac{ 2m\alpha}{\beta}}q^m(r)h(r)$ and integrate over $(\ve,r)$ to  have
\begin{equation}\label{defn-Q-eqn}
Q(r):=r^{n-1-\frac{2m \alpha}{ \beta}}q^m(r)h(r)\frac{q'(r)}{q(r)}=c_1+c_2\int_\ve^r\rho^{n-3-\frac{2m  \alpha}{ \beta}}q^m(\rho)h(\rho)d\rho\quad\forall r> \ve>0.
\end{equation}
 Since $h$ is positive for $r>\ve,$ and $\|h\|_{L^\infty([\ve,\infty])}<\infty,$ 
by \eqref{eq-f-q-behavior-infty}, 
\begin{align*}
\int_ \ve ^r\rho^{n-3-\frac{2m  \alpha}{ \beta}}q^m(\rho)h(\rho) d\rho\leq \int_ \ve ^{r_0}\rho^{n-3-\frac{2m  \alpha}{ \beta}}q^m(\rho)h(\rho)  d\rho+ C\int_ {r_0} ^\infty\rho^{-1-\frac{m  \alpha}{\beta} } d\rho<\infty 
\end{align*}
holds for any $r>\ve$ and  some constant $C>0.$   Hence  the  monotone increasing function $Q(r)$ is    bounded above in $(\ve,\infty)$. Thus   
$\lim_{r\to\infty} Q(r)$ exists. 

Now we claim that $\lim_{r\to\infty}Q(r) \leq 0.$ Suppose to the  contrary that $\displaystyle\lim_{r\to\infty}Q(r) >0.$    Then by \eqref{eq-sol-behavior-infty-equiv}, \eqref{eq-q'-expression} and \eqref{eq-h-limit}, 
\begin{align*}
0<\lim_{r\to\infty} r^{n-1- \frac{2m\alpha}{ \beta}}q^m(r) \frac{q'(r)}{q(r)}   = \eta^m\lim_{r\to\infty} r^{-  \frac{m\alpha}{ \beta}}\left\{   \frac{\al}{\be}+\frac {rf'(r)}{f(r)}\right\}=\eta^m \lim_{r\to\infty}  \frac {r^{1- \frac{m\alpha}{ \beta}} f'(r)}{f(r)}. 
\end{align*}
Thus  there exist constants $c_0>0$ and $r_1>1$ such that  
\begin{align*}
&\frac {f'(r)}{f(r)} \geq \frac{c_0}{\eta^m} r^{-1+  \frac{m\alpha}{ \beta}}\qquad\qquad\qquad\qquad\qquad\,\,\,\forall r>r_1\\
\Rightarrow\quad&\log f(r)\geq \log f(r_1) + \frac{c_0\be }{\eta^mm\al} \left( r^{\frac{m\alpha}{ \beta}}-r_1^{  \frac{m\alpha}{ \beta}}\right)\quad\forall r>r_1
\end{align*}
which contradicts   \eqref{eq-sol-behavior-infty-equiv}.  Hence  $\displaystyle\lim_{r\to\infty}Q(r) \leq 0.$
Since by \eqref{defn-Q-eqn} $Q(r)$ is a strictly  monotone increasing function of $r>\ve, $  it follows that   $Q(r)<0$  for any $r>\ve$. Thus   $q'(r)<0$ for any $r>0$ since $0<\ve<1$ is arbitrary. This together with \eqref{eq-q'-expression} implies that $f$ satisfies \eqref{eq-hess-f^m}. 
\end{proof}
 
\begin{lemma}\label{lem-equivalnece-problems}
Let $n\geq3,$ $0< m < \frac{n-2}{n}$. Suppose $\alpha$, $\beta$, $\rho_1$ satisfy \eqref{eq-alpha-beta-neg-condition}. Let $f$ be a radially symmetric  solution  of \eqref{eq-fde-scaled} in $\R^n\setminus\{0\}$ satisfying \eqref{eq-sol-behavior-origin} and \eqref{eq-sol-behavior-infty-equiv} for some constants  $A>0$ and $\eta>0. $  Let $g$, $\tilde\al$, $\tilde\beta$, $\delta_0$,  $\delta_1$ be given by \eqref{eq-def-g-from-f}, \eqref{eq-tilde-alpha-beta} and \eqref{defn-delta0-1} respectively. Then $g$  satisfies  \eqref{eq-inversion}  in $\R^n\setminus\{0\},$        \eqref{eq-fde-scaled-at-origin-inv},  \eqref{eq-g-class} and \eqref{eq-g(0)-g'(0)}. 
\end{lemma}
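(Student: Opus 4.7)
The proof will proceed in four steps, with the main work concentrated in the analysis of the derivative of $g$ at the origin.

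First, that $g$ satisfies \eqref{eq-inversion} in $\R^n\setminus\{0\}$ is a direct consequence of the inversion identity already recorded in the opening paragraph of Section \ref{sec-inversion}: a radial $f$ solves \eqref{eq-fde-scaled} if and only if $g(r)=r^{-(n-2)/m}f(r^{-1})$ solves \eqref{eq-inversion}, with the relations $\tilde\alpha=\alpha-\frac{n-2}{m}\beta$ and $\tilde\beta=-\beta$ of \eqref{eq-tilde-alpha-beta}. For the boundary values, using $\tilde\alpha/\tilde\beta-(n-2)/m=-\alpha/\beta$ from \eqref{eq-tilde-alpha-beta-0} I compute
\[
r^{\tilde\alpha/\tilde\beta}g(r)=r^{\tilde\alpha/\tilde\beta-(n-2)/m}f(r^{-1})=(r^{-1})^{\alpha/\beta}f(r^{-1})\xrightarrow[r\to\infty]{}A
\]
by \eqref{eq-sol-behavior-origin}, which is \eqref{eq-fde-scaled-at-origin-inv}; similarly $g(r)=(r^{-1})^{(n-2)/m}f(r^{-1})\to\eta$ as $r\to 0^+$ by \eqref{eq-sol-behavior-infty-equiv}, yielding the first half of \eqref{eq-g(0)-g'(0)} and extending $g$ continuously to $r=0$ with $g(0)=\eta$.

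The heart of the argument is to extract the precise behavior of $g'(r)$ as $r\to 0^+$. Write $H(r):=r^{n-1}(g^m)'(r)$ and $\sigma:=(n-2-nm)/m$, so the radial form of \eqref{eq-inversion} becomes $H'(r)=-r^{n-3+\sigma}(\tilde\alpha g(r)+\tilde\beta r g'(r))$, with $n-3+\sigma=(n-2)/m-3>0$ and $n-2+\sigma=(n-2-2m)/m>0$ since $m<(n-2)/n$. For $0<r_1<r$, integration on $[r_1,r]$ and integration by parts on the $\tilde\beta rg'$ piece give
\[
H(r)-H(r_1)=\bigl[\tilde\beta(n-2+\sigma)-\tilde\alpha\bigr]\int_{r_1}^r\rho^{n-3+\sigma}g(\rho)\,d\rho-\tilde\beta\bigl[\rho^{n-2+\sigma}g(\rho)\bigr]_{r_1}^r.
\]
Because $g$ is bounded near $0$ with $g(0)=\eta$, the boundary term at $r_1$ vanishes and the integral converges as $r_1\to 0^+$; hence $L:=\lim_{r_1\to 0^+}H(r_1)$ exists in $\R$. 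To show $L=0$, I use the algebraic identity $g'(r)=H(r)/(mg^{m-1}(r)r^{n-1})$: if $L\ne 0$, then for small $r>0$ the derivative $g'(r)$ has a fixed sign with $|g'(r)|\ge c\,r^{-(n-1)}$ for some $c>0$, and integrating backward from any $r_0>0$ forces $g(r)\to\pm\infty$ as $r\to 0^+$, contradicting $g(0)=\eta$; thus $L=0$.

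Substituting $g(\rho)=\eta+o(1)$ into the integral representation and using the simplification $\tilde\beta(n-2+\sigma)-\tilde\alpha-\tilde\beta(n-2+\sigma)=-\tilde\alpha$ on the leading term yields the expansion
\[
H(r)=-\frac{\tilde\alpha\,\eta\,m}{n-2-2m}\,r^{(n-2-2m)/m}\bigl(1+o(1)\bigr)\quad\text{as }r\to 0^+.
\]
Since $(n-2-2m)/m-(n-1)=-\delta_1$ by a direct computation using \eqref{defn-delta0-1}, dividing by $mg^{m-1}(r)r^{n-1}$ gives
\[
g'(r)=-\frac{\tilde\alpha\,\eta^{2-m}}{n-2-2m}\,r^{-\delta_1}\bigl(1+o(1)\bigr).
\]
When $(n-2)/(n+1)\le m<(n-2)/n$ one has $\delta_1\in[0,1)$, so the second part of \eqref{eq-g(0)-g'(0)} holds; moreover the bound $|g'(r)|\le Cr^{-\delta_1}$ integrates via subadditivity of $x\mapsto x^{1-\delta_1}$ on $[0,\infty)$ to give $|g(r)-g(s)|\le C'|r-s|^{1-\delta_1}\le C'|r-s|^{\delta_0}$ (as $\delta_0=(1-\delta_1)/2\le 1-\delta_1$), so $g\in C^{0,\delta_0}([0,\infty);\R)$. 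When $0<m<(n-2)/(n+1)$ one has $-\delta_1>0$, hence $g'(r)\to 0$, yielding $g\in C^1([0,\infty);\R)$ with $g'(0)=0$.

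The principal obstacle is Step 3: proving that $\lim_{r_1\to 0^+}H(r_1)$ exists and vanishes. The integration-by-parts relies on the positivity of the exponent $n-2+\sigma$ to kill the boundary term, and the vanishing of $L$ is forced by the algebraic identity $g'=H/(mg^{m-1}r^{n-1})$ coupled with the prescribed value $g(0)=\eta$; everything else reduces to bookkeeping with the explicit integral representation of $H$.
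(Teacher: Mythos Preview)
Your proof is correct and in fact cleaner than the paper's at the key step. The paper establishes the existence and vanishing of $\lim_{r\to 0^+}r^{n-1}(g^m)'(r)$ by first invoking Lemma~\ref{lem-existence-self-similar-hess-sign} to obtain the sign $\tilde\alpha g+\tilde\beta r g'>0$, which makes $H(r)=r^{n-1}(g^m)'(r)$ strictly decreasing; it then argues $\lim H\le 0$ by contradiction, deduces $g'<0$, and uses the two-sided bound $-(\tilde\alpha/\tilde\beta)g(0)<rg'<0$ to force $\lim H=0$. Your integration-by-parts on the $\tilde\beta\rho g'$ term sidesteps all of this: writing $H(r)-H(r_1)$ purely in terms of $g$ (with positive exponent $n-2+\sigma>0$ on the boundary term) shows at once that $\lim_{r_1\to 0^+}H(r_1)$ exists, and the same blow-up contradiction then gives $L=0$. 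This makes the argument self-contained and independent of Lemma~\ref{lem-existence-self-similar-hess-sign}; the paper's route, on the other hand, yields the extra information $g'<0$ on $(0,\infty)$, which you do not obtain (but which is not part of the statement). One minor remark: your global bound $|g'(r)|\le Cr^{-\delta_1}$ is only justified near $r=0$ from the expansion, but since $g\in C^2((0,\infty))$ the H\"older estimate away from the origin is automatic, so the conclusion $g\in C^{0,\delta_0}([0,\infty))$ in the local sense of \eqref{eq-g-class} still follows.
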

\begin{proof}
Let $g(0)=\lim_{r\to 0^+}g(r)$. Then by \eqref{eq-sol-behavior-infty-equiv} $g(0)=\eta$. By direct computation $g\in C([0,\infty);\R)\cap C^2((0,\infty);\R)$ satisfies \eqref{eq-inversion} in  $\R^n\setminus\{0\}$, and by  \eqref{eq-sol-behavior-origin},  \eqref{eq-fde-scaled-at-origin-inv} holds. We next prove that $rg'(r)\in L^{\infty}((0,\infty))$.  By \eqref{eq-tilde-alpha-beta-0} and Lemma \ref{lem-existence-self-similar-hess-sign}, 
\begin{align}\label{eq-hess-g-relation-f-g}
\frac{\tilde\al}{\tilde\beta}g(r)+  r g'(r)&=r^{-\frac{n-2}{m}}\left\{\left(\frac{\tilde\al}{\tilde\beta}-\frac{n-2}{m}\right) f(r^{-1})-    r^{-1}f'(r^{-1})\right\}\notag\\
&=-r^{-\frac{n-2}{m}}\left\{\frac{\al}{\beta}f(r^{-1})+r^{-1}f'(r^{-1})\right\}>0\qquad\forall r>0.
\end{align}
By \eqref{eq-inversion} and \eqref{eq-hess-g-relation-f-g},  
\begin{equation}\label{eq-hess-g-negative}
\frac{1}{r^{n-1}}\left( r ^{n-1}(g^m)'\right)'=-r^{\frac{n-2-nm}{m}-2}\left\{ {\tilde\al} g(r)+ \tilde\beta r g'(r)\right\}<0\quad\forall r>0. 
\end{equation}
Hence $r^{n-1}(g^m)'(r)$ is decreasing in $r>0.$ We now claim that 
\begin{equation}\label{eq-bound-rg_r-origin}
\lim_{r\to 0} r^{n-1}(g^m(r))' \leq 0. 
\end{equation} 
Suppose the claim is not true. Then there exists a constant $\delta>0$ such that 
\begin{equation*} 
r ^{n-1}(g^m)'(r) \geq \delta\quad\forall 0<r<\delta. 
\end{equation*}
Hence there exists a constant  $\tilde \delta >0$ such that $g'(r)\geq  \tilde \delta  \,r^{1-n}$ for $0<r<\delta$ since $g(0)=\eta>0.$ This implies that 
$$
g(\delta)-g(r)\geq \frac{\tilde \delta }{n-2}\left(r^{2-n}-\delta^{2-n} \right)\quad\forall 0<r<\delta,
$$ 
which  diverges to infinity as $r\to0.$ This is a contradiction. Hence the claim \eqref{eq-bound-rg_r-origin} holds.
Then by \eqref{eq-hess-g-negative} and  \eqref{eq-bound-rg_r-origin}, 
$$
r^{n-1}(g^m)'(r) < 0\quad\Rightarrow\quad g'(r)<0\quad \forall r>0.
$$ 
Hence by \eqref{eq-hess-g-relation-f-g},
\begin{equation}\label{eq-rg_r-bound}
-\frac{\tilde\al}{\tilde\beta}\,g(0)<-\frac{\tilde\al}{\tilde\beta}\,g(r)< r g'(r)< 0\quad \forall r>0.
\end{equation}
Thus $rg'(r)\in L^{\infty}((0,\infty))$. Now we claim that 
\begin{equation}\label{eq-zero-limt-rg_r-origin}
 \lim_{r\to 0^+}r g'(r)=0. 
\end{equation}
In order to prove \eqref{eq-zero-limt-rg_r-origin}, we note that by \eqref{eq-rg_r-bound}, 
$$
\lim_{r\to0}|r^{n-1}(g^m)'(r)|= mg^{m-1}(0) \lim_{r\to0}|r^{n-1}  g'(r)|\leq\frac{m\tilde\al g^{m}(0)}{\tilde\be}\cdot\lim_{r\to0}r^{n-2}=0 
$$ 
and hence by \eqref{eq-hess-g-negative},  
\begin{align}\label{eq-v-integral-form-equiv-0}
&r ^{n-1}(g^m(r))'=-\int_0^r\rho^{n-3+\frac{n-2-nm}{m}}\left\{\tilde\alpha g(\rho)+\tilde\beta \rho g'(\rho)\right\}d \rho\quad\forall r>0\notag\\
\Rightarrow\quad&g'(r)=-\frac{g^{1-m}(r)}{m r^{n-1}}\int_0^ r\rho^{n-3+\frac{n-2-nm}{m}}\left\{\tilde\alpha g(\rho)+\tilde\beta \rho g'(\rho)\right\}d\rho\quad\forall r>0.
\end{align}   
Thus by \eqref{eq-rg_r-bound} and \eqref{eq-v-integral-form-equiv-0}, 
\begin{equation*}
\begin{aligned}
\limsup_{r\to 0}\left|r g' (r)\right|&\leq \frac{2\tilde\al g^{2-m}(0)}{m }\lim_{r\to 0}\frac{\int_0^r\rho^{n-3+\frac{n-2-nm}{m}}\,d\rho}{r^{n-2}} =\frac{2\tilde\al g^{2-m}(0)}{n-2-2m}\,\lim_{r\to 0}   {r^{\frac{n-2-nm}{m}}}=0
\end{aligned}
\end{equation*}   
and \eqref{eq-zero-limt-rg_r-origin} follows.
 
Now we are ready to  prove   \eqref{eq-g(0)-g'(0)}. Since $g\in C([0,\infty);\R)$ and  $g(0)=\eta$, by \eqref{eq-zero-limt-rg_r-origin} for any $\ve>0$ there exists $\delta>0$ such that
\begin{equation}\label{eq-g'-l-infty-bd}
\eta/2\leq g(r)\leq 2\eta,\quad\mbox{and}\quad |r g'(r)|<\ve\quad \forall 0<r<\delta.
\end{equation}
Then by \eqref{eq-v-integral-form-equiv-0} and \eqref{eq-g'-l-infty-bd},
\begin{equation}\label{eq-v-integral-form-equiv} 
\left|g'(r)+ \frac{\tilde\alpha g^{1-m}(r)}{m r ^{n-1}} \int_0^ r \rho ^{ n-3+ \frac{n-2-nm}{m}}  g(\rho)d \rho\right|\leq    \frac{{\ve\,\tilde\beta \,(2\eta)^{1-m}} }{ n-2-2m} r^{  \frac{n-2-nm}{m}-1}\quad\forall 0<r<\delta.  
\end{equation}    
If $0<m<\frac{n-2}{n+1}$, \eqref{eq-v-integral-form-equiv} implies that
\begin{equation*}
\lim_{r\to 0}g'(r)=-\lim_{r\to 0}\frac{\tilde\alpha g^{1-m}(r)}{m r ^{n-1}}\int_0^r\rho^{n-3+\frac{n-2-nm}{m}}g(\rho)d \rho =  -\frac{\tilde\alpha \eta^{2-m}}{{n-2-2m}} \lim_{r\to 0} r^{ \frac{n-2-nm}{m} -1}=0.
\end{equation*} 
Then $g'$ can be extended to a continuous function on $[0,\infty)$ by setting $g'(0)=0$.  
If $\frac{n-2}{n+1}\leq m<\frac{n-2}{n},$ then by \eqref{eq-v-integral-form-equiv},
\begin{align*}
\lim_{r\to 0} r^{\delta_1}g'(r)=-\lim_{r\to 0}\frac{\tilde\alpha g^{1-m}(r)}{mr^{n-1-\delta_1}}\int_0^r\rho^{n-3+\frac{n-2-nm}{m}}  g(\rho)d \rho =-\frac{\tilde\alpha\eta^{2-m}}{{n- 2-2m}}
\end{align*}   
which implies that $r^{\delta_1}g'(r)$ can be extended to a continuous function $\tilde{h}$ (\mbox{say}) on $[0,\infty)$ by setting
$\tilde{h}(r)=r^{\delta_1}g'(r)$ for any $r>0$ and
$$
\tilde{h}(0)=-\frac{\tilde\alpha\eta^{2-m}}{{n- 2-2m}}.
$$
Then \eqref{eq-g(0)-g'(0)} holds and 
$$
g(r)=\eta+\int_0^r\rho^{-\delta_1}\tilde{h}(\rho)\,d\rho.
$$
Hence for any $s>0$, $0<r\le 1$, 
\begin{align*}
|g(r+s)-g(s)|=&\left|\int_s^{r+s}\rho^{-\delta_1}\tilde{h}(\rho)\,d\rho\right|\le C\left|\int_s^{r+s}\rho^{-\delta_1}\,d\rho\right|\\
\le&C\left|\left(r+s\right)^{1-\delta_1}-s^{1-\delta_1}\right|\\
\le&Cr^{1-\delta_1}\left|\left(1+\frac{s}{r}\right)^{1-\delta_1}-\left(\frac{s}{r}\right)^{1-\delta_1}\right|\\
\le&Cr^{1-\delta_1}\\
\le&Cr^{\delta_0}
\end{align*}
where $C>0$ is a generic constant.
Thus \eqref{eq-g-class} holds. 
\end{proof}

We are now ready for the proof of Theorem \ref{thm-existence-self-similar}.
\vspace{0.08in}

\noindent{\it Proof of Theorem \ref{thm-existence-self-similar}:} 
For any $A>0,$ existence of a radially symmetric solution $f$ of \eqref{eq-fde-scaled} in $\R^n\setminus\{0\}$, which satisfies \eqref{eq-sol-behavior-origin} and \eqref{eq-sol-behavior-infty} follows from Lemma  \ref{lem-existence-self-similar}. 
By Lemma \ref{lem-existence-self-similar-hess-sign} $f$ satisfies \eqref{eq-hess-f^m}.
In order to prove uniqueness, we let $g$, $\tilde\al$, and $\tilde\beta$ be given by \eqref{eq-def-g-from-f} and \eqref{eq-tilde-alpha-beta} respectively. By Lemma \ref{lem-equivalnece-problems}, $g$  satisfies  \eqref{eq-inversion}  in $\R^n\setminus\{0\},$ \eqref{eq-fde-scaled-at-origin-inv}, \eqref{eq-g-class}, and \eqref{eq-g(0)-g'(0)}.
Then Theorem \ref{thm-existence-inversion} yields that such $g$ is   unique. Hence  the uniqueness of  $f$ follows.      
\qed

\begin{cor}
Under the same assumption as Theorem \ref{thm-existence-self-similar}, for any $A>0,$ let $f$ be the unique radially symmetric solution of \eqref{eq-fde-scaled} in $\R^n\setminus\{0\}$ which satisfies \eqref{eq-sol-behavior-origin} and \eqref{eq-sol-behavior-infty} for some  constant  $D_A>0$  depending on $A.$ Then the following holds: 
\begin{enumerate}[(i)]
\item $\lim_{r \to 0^+}r^{ \frac{\al}{\be}+1} f'(r)=-\frac{\al}{\be}A$
\item $\lim_{r \to\infty}r^{\frac{n-2}{m}+1} f'(r)=-\frac{n-2}{m}D_A.$
\end{enumerate}
\end{cor}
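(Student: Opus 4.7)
The plan is to work through the inversion formula $g(r):=r^{-(n-2)/m}f(r^{-1})$ of Section \ref{sec-inversion} and translate the asymptotic slopes of $f$ at $0$ and $\infty$ into known asymptotic information about $g$ and $g'$ at $\infty$ and $0^+$. By Lemma \ref{lem-equivalnece-problems}, the function $g$ associated with our $f$ satisfies \eqref{eq-inversion}, \eqref{eq-fde-scaled-at-origin-inv}, \eqref{eq-g-class} and \eqref{eq-g(0)-g'(0)}; moreover, since $g(0)=\lim_{s\to\infty}s^{(n-2)/m}f(s)$, we have $g(0)=D_A$. Differentiating $f(r)=r^{-(n-2)/m}g(r^{-1})$ and setting $s=r^{-1}$ gives the single identity
\begin{equation*}
r^{\,a+1}f'(r)=-\tfrac{n-2}{m}\,s^{\tfrac{n-2}{m}-a}\,g(s)-s^{\tfrac{n-2}{m}-a+1}\,g'(s),
\end{equation*}
which will be applied with $a=(n-2)/m$ to obtain (ii) and with $a=\alpha/\beta$ to obtain (i).

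For (ii), put $a=(n-2)/m$, so the identity reduces to $r^{(n-2)/m+1}f'(r)=-\tfrac{n-2}{m}g(s)-sg'(s)$ with $s=r^{-1}\to 0^+$ as $r\to\infty$. Continuity of $g$ at $0$ and $g(0)=D_A$ handles the first term. For the second, write $sg'(s)=s^{1-\delta_1}\cdot s^{\delta_1}g'(s)$; by \eqref{eq-g(0)-g'(0)} the factor $s^{\delta_1}g'(s)$ is bounded as $s\to 0^+$ (equal to $0$ when $0<m<(n-2)/(n+1)$, equal to the finite limit $-\tilde\alpha\eta^{2-m}/(n-2-2m)$ otherwise), and $1-\delta_1>0$ in both regimes, so $sg'(s)\to 0$. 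This yields (ii).

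For (i), put $a=\alpha/\beta$ and use $\tilde\alpha/\tilde\beta=(n-2)/m-\alpha/\beta$ to rewrite the identity as
\begin{equation*}
r^{\alpha/\beta+1}f'(r)=-\tfrac{n-2}{m}\,s^{\tilde\alpha/\tilde\beta}g(s)-s^{\tilde\alpha/\tilde\beta+1}g'(s).
\end{equation*}
Introducing $\tilde q(s):=s^{\tilde\alpha/\tilde\beta}g(s)$, so that $s^{\tilde\alpha/\tilde\beta+1}g'(s)=s\tilde q'(s)-(\tilde\alpha/\tilde\beta)\tilde q(s)$, this simplifies to
\begin{equation*}
r^{\alpha/\beta+1}f'(r)=\Bigl(\tfrac{\tilde\alpha}{\tilde\beta}-\tfrac{n-2}{m}\Bigr)\tilde q(s)-s\tilde q'(s)=-\tfrac{\alpha}{\beta}\,\tilde q(s)-s\tilde q'(s).
\end{equation*}
As $s\to\infty$ one has $\tilde q(s)\to A$ by \eqref{eq-fde-scaled-at-origin-inv}, so the conclusion (i) will follow once $s\tilde q'(s)\to 0$ is established. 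This is the only genuine analytic step and is already essentially contained in the proof of Proposition \ref{prop-behavior-infinity-inversion}: the decay estimate \eqref{eq-r-q_r-limit} gives $s^p\tilde q'(s)/\tilde q(s)\to 0$ for every $p\in(1,\,1+(n-2-nm)/m-(1-m)\tilde\alpha/\tilde\beta)$ (a non-empty interval under our standing hypothesis $\tilde\alpha/\tilde\beta<(n-2-nm)/(m(1-m))$). Fixing such a $p>1$ and writing $s\tilde q'(s)=s^{1-p}\cdot\bigl(s^p\tilde q'(s)/\tilde q(s)\bigr)\cdot\tilde q(s)$, the boundedness of $\tilde q$ near infinity gives $s\tilde q'(s)\to 0$. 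This part, the transfer of the already-proved decay of $\tilde q'/\tilde q$ to a decay of $s\tilde q'(s)$ alone, is the only point that requires care; everything else is algebraic manipulation of the inversion formula.
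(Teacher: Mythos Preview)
Your proof is correct and follows essentially the same route as the paper's: both pass to $g$ via the inversion formula and Lemma~\ref{lem-equivalnece-problems}, deduce (ii) from $\lim_{s\to 0^+} s\,g'(s)=0$ (which is an immediate consequence of \eqref{eq-g(0)-g'(0)}), and deduce (i) from the decay estimate \eqref{eq-r-q_r-limit} for $\tilde q'/\tilde q$ established in the proof of Proposition~\ref{prop-behavior-infinity-inversion}. Your parametrized identity in $a$ and the explicit rewriting via $\tilde q$ are a slightly more systematic packaging of the same computation; the only minor imprecision is that in the range $0<m<\tfrac{n-2}{n+1}$ the symbol $\delta_1$ is not used in \eqref{eq-g(0)-g'(0)} (one simply has $g'\in C([0,\ve))$ with $g'(0)=0$, so $s\,g'(s)\to 0$ directly), but the conclusion is unaffected.
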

\begin{proof}   
Let $g$, $\tilde\al$, $\tilde\beta$ be given by \eqref{eq-def-g-from-f} and \eqref{eq-tilde-alpha-beta} respectively.  
By Lemma \ref{lem-equivalnece-problems}, $g$  satisfies  \eqref{eq-inversion}  in $\R^n\setminus\{0\},$ \eqref{eq-fde-scaled-at-origin-inv}, \eqref{eq-g-class}, and \eqref{eq-g(0)-g'(0)}.
Let $\tilde q$ be given by \eqref{eq-def-tilde-q}. By \eqref{eq-tilde-alpha-beta-1} and the proof of  Proposition \ref{prop-behavior-infinity-inversion}, \eqref {eq-r-q_r-limit} holds since \eqref{eq-g(0)-g'(0)} implies  \eqref{eq-initial}.  Hence by \eqref{eq-fde-scaled-at-origin-inv}, \eqref{eq-r-q_r-limit},  and \eqref{eq-hess-g-relation-f-g}, for any $1<p<1+ \frac{n-2-nm}{m} -(1-m)\frac{\tilde\alpha}{\tilde\beta}$,
$$
0=\lim_{r\to\infty}\frac{r ^p \cdot r ^{ \frac{\tilde\alpha}{\tilde\beta}-1}\left\{\frac{\tilde \alpha}{\tilde\beta} g(r)+ r g'(r) \right\}}{r ^{ \frac{\tilde\alpha}{\tilde \beta}}g(r)}=-\lim_{r\to\infty}\frac{r ^{p-1}  \cdot r ^{-\frac{\al}{\be}} \left\{\frac{  \alpha}{ \beta} f(r ^{-1})+ r ^{-1} f'(r ^{-1}) \right\}}{A}.
$$  
Then it follows that    
$$
0=\lim_{r\to 0^+}r^{\frac{\al}{\be}} \left\{\frac{  \alpha}{ \beta} f(r)+ r f'(r) \right\}= \frac{\al}{\be}A+ \lim_{r \to0^+}r^{ \frac{\al}{\be}+1} f'(r). 
$$
By \eqref{eq-g(0)-g'(0)}, 
\begin{align*}
0=\lim_{r\to 0^+}  r g'(r)&=-\lim_{r\to0^+}\left\{ \frac{n-2}{m}  r^{-\frac{n-2}{m}}f(r^{-1})+   r^{-\frac{n-2}{m}-1} f'(r^{-1})\right\} \\&=-\frac{n-2}{m}D_A-\lim_{r \to\infty}r^{\frac{n-2}{m}+1} f'(r),
\end{align*} 
which finishes the proof.
\end{proof}
 
\begin{remark}[Monotonicity and Comparison]\label{rmk-scaling-monotonicity}
 Let $\alpha$, $\beta$ and $\rho_1$ satisfy \eqref{eq-alpha-beta-neg-condition} and 
  $f_{\lambda}$ be as in Remark \ref{self-similar-soln-scaling-property-rmk}. Then by Lemma \ref{lem-existence-self-similar-hess-sign} for any $r=|x|>0$,
\begin{align*}
\frac{d}{d\ld}f_{\ld}(r)&=\ld^{\frac{2}{1-m}-1}\left\{\frac{2}{1-m}   f_1(\ld r)+(\ld r) f_1'(\ld r)\right\}\\ 
&< \ld^{\frac{2}{1-m}-1}\left\{\frac{\al}{\be} f_1(\ld r)+(\ld r) f_1'(\ld r)\right\}\\
&<0.
\end{align*}
Hence for any $\ld_1>\ld_2>0$, there exists a constant $0<c_0<1$ such that 
$$ 
c_0 f_{\ld_2}(r)\leq f_{\ld_1}(r)<f_{\ld_2}(r)\quad\forall r>0
$$ 
since $\lim_{r\to 0} \frac{f_{\ld_1}(r)}{f_{\ld_2}(r)}>0 $ and $\lim_{r\to\infty} \frac{f_{\ld_1}(r)}{f_{\ld_2}(r)}>0.$
\end{remark}
 
\section{Existence and  asymptotic behavior of singular solutions } \label{sec-asymptotic-behavior}
\setcounter{equation}{0}
\setcounter{thm}{0}

Let  $n\geq3$ and  $0< m < \frac{n-2}{n} $.  Let  $\rho_1=1$ and  $\alpha$, $\beta$  satisfy \eqref{eq-alpha-beta-neg-condition}.   For any $\lambda>0$ we let $f_{\lambda}$ be as in Remark \ref{self-similar-soln-scaling-property-rmk} and $U_\ld $ be given by \eqref{eq-def-U-lambda} for the rest of the paper.   This section will be devoted to the study of singular solutions  of \eqref{eq-fde-global-except-0} 
trapped in between two self-similar  solutions $U_{\ld_1}, U_{\ld_2} $ of the form  \eqref{eq-def-U-lambda} for  some constants  $\ld_1>\ld_2>0.$   
For  our convergence, we will restrict ourselves to the case $\frac{2}{1-m}<\frac{\alpha}{\beta} < n ,$ which guarantees  the integrability of singular solutions of \eqref{eq-fde-global-except-0} near the origin. 
   
\subsection{Existence} 

We will first prove Theorem \ref{thm-weighted-L1-contraction-new} which is a weighted $L^1$-contraction principle  with weight $|x|^{-\mu}$ for $\mu\in (\mu_1,\mu_2)$.

\vspace{0.08in}

\noindent{\it Proof of Theorem \ref{thm-weighted-L1-contraction-new}:} 
We choose  $\eta\in C_0^\infty(\R^n)$   such that $0\leq \eta\leq1, $ $\eta=1$ for $|x|\leq 1,$ and $\eta=0$ for $|x|\geq 2.$ For $R>2,$ and $0<\ve<1,$ let $\eta_{R}(x):=\eta(x/R)$, $\eta_\ve(x):=\eta(x/\ve)$, and $\eta_{\ve,R}(x)=\eta_R(x)-\eta_\ve(x).$ Then   $|\D \eta_{\ve,R}|^2 +|\La \eta_{\ve,R}|\leq C\ve^{-2}$   for $\ve\leq |x|\leq 2\ve, $  and  $|\D \eta_{\ve,R}|^2 +|\La \eta_{\ve,R}|\leq CR^{-2}$   for $R\leq |x|\leq 2R.  $   
By Kato's inequality \cite{K}, 
\begin{equation}\label{eq-Kato-u-v-new}
\frac{\p}{\p t}|u-v|\leq \La|u^m-v^m|\quad\mbox{in $\sD'\left(\left(\R^n\setminus\{0\}\right)\times(0,\infty)\right)$.}
\end{equation}
Multiplying  \eqref{eq-Kato-u-v-new}  by $\eta_{\ve,R}(x) |x|^{-\mu}$  and integrating over $\R^n\setminus\{0\}$, 
we have  
\begin{align*}
&\frac{d}{dt}\int_{\R^n}|u-v|(x,t)\,\eta_{\ve,R}(x) |x|^{-\mu} dx \leq \int_{\R^n}| u^m-v^m|(x,t)\, \La\left(\eta_{\ve,R}(x) |x|^{-\mu}\right)dx\\
=&\int_{\R^n} | u^m-v^m|(x,t)\left\{ |x|^{-\mu}  \La \eta_{\ve,R}+2 \D \eta_{\ve,R} \cdot\D |x|^{-\mu}+\eta_{\ve,R} \La |x|^{-\mu}\right\}  dx.
\end{align*}
Since $0<\mu<\mu_2<n-2,$
\begin{equation}\label{eq-sign-hess-f-theta-new}
\begin{aligned}
\La |x|^{-\mu}&=\mu \left\{ \mu -(n-2) \right\}|x|^{-\mu-2}< 0\quad\mbox{in $\R^n\setminus\{0\}$}.
\end{aligned}
\end{equation}
Hence   
\begin{align}\label{eq-weighted-L1-contrac-test-eq-a-new}
\frac{d}{dt}\int_{\R^n}|u-v|(x,t)\,\eta_{\ve,R} (x)|x|^{-\mu} dx\leq& CR^{-2-\mu}  \int_{B_{2R}\setminus B_R} a(x,t)\,| u-v|(x,t)\,dx\notag\\
&\qquad +  C\ve^{-2-\mu} \int_{B_{2\ve}\setminus B_\ve} a(x,t)\,| u-v|(x,t)\,dx
\end{align}
where 
$$
a(x,t):=\int_0^1 \frac{m ds }{ \{s u+(1-s)v\}^{1-m}} \leq m U_{\ld_1}^{\,m-1}(x,t),\quad \forall (x,t)\in \left( \R^n\setminus \{0\}\right)\times(0,\infty)
 $$ 
by \eqref{eq-trapped-particular-sols-new}. 
It follows from    \eqref{eq-trapped-particular-sols-new}  and \eqref{eq-weighted-L1-contrac-test-eq-a-new} that  for $R>2 ,$ and $0<\ve<1,$ 
\begin{align}\label{eq-w-L1-contraction-1-new}
&\frac{d}{dt}\int_{\R^n}|u-v|(x,t) \eta_{\ve,R}(x) |x|^{-\mu} dx\notag\\
\leq&  C R^{-2-\mu}  \int_{B_{2R}\setminus B_R} U_{\ld_1}^{m-1} U_{\ld_2}(x,t)\, dx+  C \ve^{-2-\mu } \int_{B_{2\ve}\setminus B_\ve} U_{\ld_1}^{m-1}U_{\ld_2}(x,t)\,dx.
\end{align}
By Remark \ref{rmk-scaling-monotonicity} for any $t>0$ and $r>0,$ 
\begin{align}\label{eq-w-L1-contraction-1-integration-0-T-new}
\int_0^{\,t}\int_{B_{2r}\setminus B_r} U_{\ld_1}^{m-1}U_{\ld_2}\,dx\,ds=&\int_0^{\,t}\int_{B_{2r}\setminus B_r} s^{-m\al}  f_{\ld_1}^{m-1}(s^{-\beta}x)f_{\ld_2}(s^{-\beta}x) \,dx\,ds\notag\\
=&\int_0^{\,t} s ^{-m\al+n\beta} \int_{B_{2 s ^{-\beta}r}\setminus B_{s ^{-\beta}r}}  f_{\ld_1}^{m-1}(y)f_{\ld_2}(y)  \,dy\,ds\notag\\
\leq& c_0^{m-1}\int_0^{\,t} s ^{-m\al+n\beta}\int_{B_{2 s ^{-\beta}r}\setminus B_{s ^{-\beta}r}}  f_{\ld_2}^{m}(y) \,dy\,ds
\end{align}
where $c_0>0$ is as given in Remark  \ref{rmk-scaling-monotonicity}. 
Since $\frac{\al}{\be}<\frac{n-2}{m}$,  by  \eqref{eq-sol-behavior-origin} and \eqref{eq-sol-behavior-infty}, there exists a constant $C>0$ such that $f_{\ld_2}(r)\leq C\min \left( r^{-\frac{\al}{\beta}}, r^{-\frac{n-2}{m}}\right)$ for  $r>0.$   
Hence by \eqref{eq-w-L1-contraction-1-integration-0-T-new},           
\begin{align} \label{eq-w-L1-contraction-2-new-pre}
&\int_0^{\,t}\int_{B_{2r}\setminus B_r} U_{\ld_1}^{m-1}U_{\ld_2} \,dx\,d s\notag\\
\leq& C \int_0^{\,t} s ^{-m\al+n\beta}\int_{B_{2 s ^{-\beta}r}\setminus B_{s ^{-\beta}r}} \min \left( |y|^{-\frac{m\al}{\beta}}, |y|^{- {(n-2)}{ }}\right)  dy\,d s\notag\\
\leq&C \int_0^{\,t} s ^{-m\al+n\beta} \min\left\{ ({s ^{-\beta}r})^{n-\frac{m\al}{\be}} ,({s ^{-\beta}r})^{2}  \right\}  d s\notag\\
\leq&\left \{\begin{aligned}
&C\,t\,r^{n-\frac{m\al}{\be}}  \qquad\qquad\qquad\qquad\qquad\qquad&\mbox{ if }t^{-\be} r\leq 1\\
&C r^{n-\frac{m\al}{\be}+\frac{1}{\be}} +Cr^2\int_{r^{1/\be}}^t s ^{(n-2)\beta -m\al }ds \quad&\mbox{ if }t^{-\be} r> 1. 
\end{aligned}\right. 
\end{align}
 Since $\alpha=\frac{2\beta-1}{1-m}$,  
$$
(n-2)\beta-m\alpha+1= n\be-\al. 
$$
If $t^{-\be}r>1,$ then
\begin{align*} 
\int_{r^{1/\be}}^t s ^{(n-2)\beta -m\al }\,ds\, 
=\,&\left \{\begin{aligned}
&\frac{1}{\al-n\be}\left(r^{n-\frac{\al}{\be}}-t^{n\be-\al}\right)       \qquad&\mbox{ if } n\be<\al  \\
& \log  (t\, r^{-\frac{1}{\be}}  ) \quad&\mbox{ if } n\be=\al   \notag\\
&\frac{1}{n\be-\al}\left( t^{n\be-\al} -r^{n-\frac{\al}{\be}}\right) \quad&\mbox{ if } n\be>\al  \notag
\end{aligned}\right.\notag
\end{align*}
and hence   by \eqref{eq-w-L1-contraction-2-new-pre},
\begin{align}\label{eq-w-L1-contraction-2-new}
&\int_0^{\,t}\int_{B_{2r}\setminus B_r} U_{\ld_1}^{m-1}U_{\ld_2} \,dx\,d s\notag\\
\leq&\left \{\begin{aligned}
&C\,t\,r^{n-\frac{m\al}{\be}}  \qquad\qquad\qquad\qquad\qquad\,&\mbox{if }t^{-\be} r\leq 1\\
&C r^{n+2-\frac{\al}{\be}} +Cr^2 \left\{ r^{\,n-\frac{\al}{\be}}\,+\,\log  (t\, r^{-\frac{1}{\be}}  )\,+\,t^{\,n\be-\al}\right\} \quad&\mbox{if }t^{-\be} r>1.
\end{aligned}\right.
\end{align}
By  \eqref{eq-w-L1-contraction-1-new}  and \eqref{eq-w-L1-contraction-2-new},     for any $t>0,$   
\begin{align}\label{eq-w-L1-contraction-3-new}
&\int_{\R^n}|u-v|(x,t)\, \eta_{\ve,R} (x)|x|^{-\mu} dx- \int_{\R^n}|u_0-v_0|(x) \eta_{\ve,R} (x)|x|^{-\mu} dx\notag \\
\leq &C_t\left(R^{n-\frac{\alpha}{\beta}-\mu}+R^{-\mu}\log R+R^{-\mu}+\ve^{n-2-\frac{m\al}{\be}-\mu}\right)
\end{align}
for sufficiently large   $R>2$   and   small $\ve\in (0,1)$.     
Letting $\ve\to0$ and $R\to\infty$ in \eqref{eq-w-L1-contraction-3-new}, \eqref{eq-weighte-L1-contraction-new}  
follows from  the choice of   $\mu  \in (\mu_1, \mu_2). $ 
By a similar argument as above,  \eqref{eq-weighte-L1-contraction-positive-part-new} holds. 
\qed 
    
\vspace{0.08in}
\noindent{\it Proof of Theorem \ref{thm-existence-sol-fde-singular}:} 
Note that $\al=\beta\gamma,$ $u_0\in L^\infty_{\loc}(\R^n\setminus\{0\}),$ and \eqref{eq-initial-trapped-particular-sols-existence} is equivalent to  
\begin{equation*}
U_{\ld_1}(x,0)\leq u_0(x)\leq U_{\ld_2}(x,0),\quad \forall x\in \R^n\setminus\{0\}.
\end{equation*}  
Uniqueness of solution of \eqref{eq-fde-global-except-0} satisfying \eqref{eq-existence-sols-trapped-self-similar}  then follows from  Theorem \ref{thm-weighted-L1-contraction-new}. We next observe that by \eqref{eq-behavior-f1_lambda}  $f_{\ld_i} $ satisfies  
\begin{equation}\label{eq-asymp-behavior-f-lambdas}
\lim_{|x|\to 0}|x|^{\frac{\al}{\be}}f_{\ld_i}(x)= A_{i}\quad\mbox{and}\quad
\lim_{|x|\to\infty}|x|^{\frac{n-2}{m}}f_{\ld_i} (x)= \overline D_{i} 
\end{equation} 
where $A_{i} = \ld_i^{\frac{2}{1-m}-\frac{\al}{\be}}$ and $\overline D_{i}  = \ld_i^{\frac{2}{1-m}-\frac{n-2}{m}}D_1>0$. 
By Theorem 2.2 of \cite{Hui2} combined with \eqref{eq-initial-trapped-particular-sols-existence} there exists a unique solution $u_R\in C(\overline{\cA_R}\times(0,\infty))\cap C^{\infty}(\cA_R\times(0,\infty))$ of
\begin{equation*}
\left\{\begin{aligned}
u_t=\La u^m\quad&\mbox{ in $ \cA_R \times(0,\infty),$}\\
u =U_{\ld_1}\quad&\mbox{ in $ \p \cA_R \times(0,\infty),$}\\
u(\cdot,0)=u_0\quad&\mbox{ in $ \cA_R,$}
\end{aligned}\right.
\end{equation*}
which satisfies  \eqref{eq-fde}  in $\cA_R\times(0,\infty)$ in the classical sense,  
$$ 
\|u_R(\cdot, t)- u_0\|_{L^1(\cA_R)}\to 0\quad\mbox{as $t\to0$,}
$$
and
\begin{equation}\label{eq-u_R-trapped-particular-sols-existence}
U_{\ld_1}\leq  u_R \leq U_{\ld_2}\quad\mbox{in $\cA_R\times(0,\infty)$.}
\end{equation} 
Since $f_{\lambda_1}$ satisfies \eqref{eq-hess-f^m},
\begin{equation}\label{eq-bdary-neg}
\p_t U_{\ld_1}=\Delta U_{\ld_1}^m=t^{-(m\al+2\beta)} \La f_{\ld_1}^m\,(t^{-\be} x)<0 ,\quad\forall (x,t)\in \p\cA_R\times(0,\infty).
\end{equation}
Hence by Theorem 2.2 of \cite{Hui2}, 
\begin{equation}\label{eq-AB-u-R}
\p_t u_R\leq \frac{u_R}{(1-m)t}\quad\mbox{in   $\cA_R\times(0,\infty)$}.
\end{equation}
For sake of completeness we will give a different simple proof of \eqref{eq-AB-u-R} here. Let $ v:=\frac{\p_t u_R}{u_R}$ and 
$$
\cP[z]:=\frac{m}{u_R} \La\left( u_R^m z\right)- \p_tz=m\left\{u_R^{m-1}\La z +\frac{2m}{m-1}\D u_R^{m-1}\cdot\D z +vz\right\}- \p_tz .
$$ 
By direct computation,  $v$ satisfies 
\begin{equation*}
\cP[v]=v^2\quad\mbox{in   $\cA_R\times(0,\infty)$.}
\end{equation*}
For any $\delta\in(0,1),$ we choose $\ve_\delta\in(0,\delta)$ such that 
$$   
v(\cdot,\delta) <\frac{1+\delta}{(1-m)(\delta-\ve_\delta)} \quad\mbox{in $\cA_R,$} 
$$
and define 
$$ 
w_\delta(t)=\frac{1+\delta}{(1-m)(t-\ve_\delta)}\quad\forall t\ge\delta.
$$
Then $w_\delta(t)$ satisfies 
$$
\cP[w_\delta]=mvw_\delta+\frac{1-m}{1+\delta} w_\delta^2\quad\mbox{in 
$\cA_R\times(\delta, \infty).$}
$$  
We claim that 
\begin{equation}\label{eq-AB-claim}
v(x,t)<w_\delta(t)\quad\mbox{in $\cA_R\times(\delta,\infty).$}
\end{equation}
By \eqref{eq-bdary-neg}, $v<0<w_\delta$ on  $\p\cA_R\times[\delta,\infty)$. Since $v<w_\delta$ on $\cA_R\times \{\delta\}$, if \eqref{eq-AB-claim} does not hold, then  there exists an interior point $(x_0,t_0)\in \cA_R\times(\delta,\infty)$ such that 
\begin{equation}\label{eq-AB-claim-comparison}
\left\{\begin{aligned}
v(x_0,t_0)=w_\delta(x_0,t_0)>0,\qquad &\D v(x_0,t_0)=\D w_\delta(x_0,t_0)\\
D^2v(x_0,t_0)\leq D^2w_\delta(x_0,t_0),\qquad &\p_t v(x_0,t_0)\geq \p_tw_\delta(x_0,t_0).
\end{aligned}\right.
\end{equation}
Then by \eqref{eq-AB-claim-comparison}, we deduce that at the point $(x_0,t_0),$
\begin{align*}
0<w_\delta^2=v^2= \cP[v]\leq  \cP[w_\delta]= \left( m +\frac{1-m}{1+\delta}\right)w_\delta^2<w_\delta^2
\end{align*}
which is a contradiction. Thus \eqref{eq-AB-claim} holds. Letting $\delta\to 0$ in \eqref{eq-AB-claim},
\eqref{eq-AB-u-R} follows since $\ve_\delta\in(0,\delta).$

Let $\Omega $ be a bounded open  subset  in  $ \R^n\setminus\{0\} $ such that $\overline\Omega\subset\R^n\setminus\{0\}.$ Then there is a bounded open subset $\tilde\Omega\subset\R^n\setminus\{0\}$ which contains $\overline\Omega$. By \eqref{eq-u_R-trapped-particular-sols-existence},  for any $0<\delta<T,$ the equation  for the sequence $\{u_R\}_{R>1}$ (for sufficiently  large  $R>1$)  is uniformly parabolic on $\tilde\Omega  \times(\delta/2,T]$. Hence  by the parabolic Schauder estimates \cite{LSU},  the sequence $\{u_R\}_{R>1}$ (for sufficiently  large  $R>1$ ) is equi-continuous in $C^{2,1}(\Omega\times (\delta, T])$. By the Ascoli Theorem and a diagonalization argument there exists  a sequence  $\{u_{R_i}\}_{i=1}^\infty$, $R_i\to\infty $ as $i\to\infty$,  such that $u_{R_i}$ converges to some function  $u\in C^{2,1}\left(\left(\R^n\setminus\{0\}\right)\times(0,\infty)\right)$ uniformly  in $C^{2,1}(K)$   as $i\to \infty$ for any compact set $K\subset\left(\R^n\setminus\{0\}\right)\times(0,\infty)$. Then $u$ satisfies \eqref{eq-fde}  in $\left(\R^n\setminus\{0\}\right)\times(0,\infty)$ in the classical sense, and \eqref{eq-existence-sols-trapped-self-similar}, \eqref{eq-AB}   follow from \eqref{eq-u_R-trapped-particular-sols-existence} and \eqref{eq-AB-u-R}. 

Now  we will prove that $u$ has initial value $u_0$. It suffices to prove that $$ \|u(\cdot, t)- u_0\|_{L^1(B_r(x_0))}\to 0\quad \mbox{  as $t\to0$}$$ for any ball $B_r(x_0)$ such that   $B_{2r}(x_0)\subset \R^n\setminus \{0\}.$ Fix such a ball $B_r(x_0)$ with $r>0,$ and  let $R_0>0$ be a constant such that $B_{2r}(x_0)\subset \cA_{R_0}.$ By using the Kato inequality and an argument similar to the proof of \cite[Lemma 3.1]{HP} we deduce that for  $R,R'>2R_0$, 
\begin{align*}
\left(\int_{B_r(x_0)} |u_{R'}-u_R|(x,t)dx\right)^{1-m}&\leq \left(\int_{B_{2r}(x_0)} |u_{R'}-u_R|(x,0)dx\right)^{1-m}+ Cr^{n(1-m)-2}\,t\\
&=Cr^{n(1-m)-2}\,t,\quad\forall t>0
\end{align*}  
for some constant $C>0.$ 
Letting $R'=R_i\to\infty,$ 
\begin{align*}
\int_{B_r(x_0)} |u-u_R|(x,t)dx &\leq Cr^{n-\frac{2}{1-m}}\,t^{\frac{1}{1-m}},\quad \forall t>0.
\end{align*}
Hence   for any $R_i>2R_0,$
\begin{align*}
&\limsup_{t\to0} \int_{B_r(x_0)} |u(x,t)-u_0(x)|dx\\
\leq&\limsup_{t\to0}\left\{\int_{B_r(x_0)} |u-u_{R_i}|(x,t)dx
+  \int_{B_r(x_0)} |u_{R_i}(x,t)-u_0(x)|dx \right\}\\
\leq&\limsup_{t\to0} Cr^{n-\frac{2}{1-m}}\,t^{\frac{1}{1-m}}=0
\end{align*}
which completes the proof of the theorem. 
\qed 
  
\begin{cor}\label{cor-existence-sol-fde-singular}
Let  $n\geq3,$ $0< m < \frac{n-2}{n} $, and $\frac{2}{1-m} <\gamma< n.$  Then the solution $u$ of \eqref{eq-fde-global-except-0} given by Theorem \ref{thm-existence-sol-fde-singular} with initial value $u_0$ satisfying \eqref{eq-initial-trapped-particular-sols-existence} for some constants $A_2>A_1>0$ is a weak solution of \eqref{eq-fde-global}. 
\end{cor}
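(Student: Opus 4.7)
The plan is to verify the three conditions in the definition of a weak solution of \eqref{eq-fde-global}: that $u\in C([0,\infty);L^1_\loc(\R^n))$, that $u$ satisfies \eqref{eq-fde} in $\R^n\times(0,\infty)$ in the distributional sense, and that $u(\cdot,t)\to u_0$ in $L^1(K)$ for every compact $K\subset\R^n$. The crucial hypothesis is $\gamma<n$, which, together with the standing condition $m<(n-2)/n$ that forces $m\gamma<mn<n-2$, provides local integrability of $|x|^{-\gamma}$ (for $L^1_\loc$ continuity and the initial trace) and of $|x|^{-m\gamma}$ with the right decay rate on thin shells around the origin (for the distributional equation).

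First I would record two pointwise bounds obtained from the sandwich \eqref{eq-existence-sols-trapped-self-similar} together with the scaling identity $\al=\be\gamma$: namely,
\begin{equation*}
u(x,t)\le U_{\ld_2}(x,t)\le C|x|^{-\gamma}\quad\mbox{and}\quad u^m(x,t)\le C|x|^{-m\gamma}\quad\forall(x,t)\in(\R^n\setminus\{0\})\times[0,T],
\end{equation*}
where $C$ depends only on $T$, $\ld_2$, $m$ and $\gamma$; the first inequality uses the elementary consequence $f_{\ld_2}(y)\le C|y|^{-\gamma}$ for all $y\ne 0$ of \eqref{eq-sol-behavior-origin}, \eqref{eq-sol-behavior-infty} and $\gamma<(n-2)/m$, combined with $\al=\be\gamma$. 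These dominations, together with the fact noted just after \eqref{eq-def-U-lambda} that $U_{\ld_i}\in C([0,\infty);L^1_\loc(\R^n))$ when $\gamma<n$, give $u(\cdot,t)\in L^1_\loc(\R^n)$ for every $t\ge 0$. For continuity at any $t_\infty>0$, pointwise a.e.\ convergence of $u(\cdot,t_n)\to u(\cdot,t_\infty)$ (inherited from classical continuity on $\R^n\setminus\{0\}$) combined with the $L^1_\loc$-continuous majorant $U_{\ld_2}(\cdot,t)$ yields $u(\cdot,t_n)\to u(\cdot,t_\infty)$ in $L^1_\loc(\R^n)$ by a generalized dominated convergence argument.

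For the initial trace with a compact $K\ni 0$, I would split $K=(K\cap B_\delta)\cup(K\setminus B_\delta)$. On the outer piece, the convergence in $L^1$ is precisely what Theorem \ref{thm-existence-sol-fde-singular} already provides. On the inner piece, the bound above gives
\begin{equation*}
\int_{K\cap B_\delta}|u(x,t)-u_0(x)|\,dx\le\int_{B_\delta}\bigl(C|x|^{-\gamma}+A_2|x|^{-\gamma}\bigr)dx\le C'\delta^{n-\gamma},
\end{equation*}
uniformly for $t\in[0,1]$, which tends to $0$ as $\delta\to 0$ because $\gamma<n$. Letting first $t\to 0^+$ and then $\delta\to 0$ completes this step.

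Finally, to verify the distributional equation, I would pick $\vp\in C_c^\infty(\R^n\times(0,\infty))$ and test against $\tilde\eta_\ve\vp$, where $\tilde\eta_\ve(x):=1-\eta(x/\ve)$ for a fixed $\eta\in C_c^\infty(\R^n)$ equal to $1$ on $B_1$ and vanishing outside $B_2$. Since $\tilde\eta_\ve\vp$ is compactly supported in $(\R^n\setminus\{0\})\times(0,\infty)$, where $u$ solves \eqref{eq-fde} classically, integration by parts gives
\begin{equation*}
\int_0^\infty\!\!\int_{\R^n}\tilde\eta_\ve u\,\vp_t\,dx\,dt+\int_0^\infty\!\!\int_{\R^n} u^m\bigl(\tilde\eta_\ve\La\vp+2\D\tilde\eta_\ve\cdot\D\vp+\vp\La\tilde\eta_\ve\bigr)dx\,dt=0.
\end{equation*}
Using the majorants above and dominated convergence, the two ``bulk'' terms converge to $\iint(u\vp_t+u^m\La\vp)\,dx\,dt$ as $\ve\to 0$. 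The two ``cutoff'' terms are supported on the shell $\{\ve\le|x|\le 2\ve\}$ and satisfy
\begin{equation*}
\left|\iint u^m\bigl(2\D\tilde\eta_\ve\cdot\D\vp+\vp\La\tilde\eta_\ve\bigr)dx\,dt\right|\le C_\vp\,\ve^{-2}\int_{\ve\le|x|\le 2\ve}|x|^{-m\gamma}dx\le C'_\vp\,\ve^{n-2-m\gamma},
\end{equation*}
which vanishes as $\ve\to 0$ precisely because $m\gamma<n-2$. The main obstacle, and really the only nontrivial point, is this cutoff estimate: it is exactly where the hypothesis $\gamma<n$ (via $m\gamma<n-2$) does genuine work; all other ingredients are bookkeeping anchored on the sandwich $U_{\ld_1}\le u\le U_{\ld_2}$ and the already-established properties of the self-similar profiles $f_{\ld_i}$.
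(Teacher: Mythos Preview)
Your proof is correct and follows essentially the same approach as the paper's own proof: the uniform bound $u(x,t)\le C|x|^{-\gamma}$ (which in fact holds for all $t>0$, not just on $[0,T]$, since $\alpha=\beta\gamma$ makes the $t$-dependence cancel exactly), the cutoff argument for the distributional equation with the key exponent $n-2-m\gamma>0$, and the splitting $B_\delta\cup(K\setminus B_\delta)$ for the initial trace are all the same ingredients. You are slightly more thorough in that you explicitly verify $u\in C([0,\infty);L^1_{\loc}(\R^n))$ at interior times, which the paper leaves implicit.
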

\begin{proof} 
Let $  \vp\in C^\infty_0\left(\R^n\times(0,\infty)\right)$ be such that $\supp\vp\subset\R^n\times(t_1,t_2)$ for some constants   $t_2>t_1>0$. For $0<\ve<1,$ let $\eta_\ve\in C_0^\infty(\R^n )$ be    as in the proof of Theorem \ref{thm-weighted-L1-contraction-new}, and let $\al$, $\be$ be given by \eqref{eq-def-alpha-beta-gamma}. Since 
$$
f_{\ld_2}(x)\leq C|x|^{-\frac{\al}{\beta}}\quad\mbox{ in }\R^n\setminus\{0\}
$$ 
for some constant $C>0,$  \eqref{eq-existence-sols-trapped-self-similar} implies  that  
$$
u(x,t)\leq U_{\ld_2}(x,t)\leq Ct^{-\alpha}|t^{-\beta}x|^{-\frac{\alpha}{\beta}}=C|x|^{-\frac{\al}{\be}}\quad\forall (x,t)\in \left(\R^n\setminus\{0\}\right)\times(0,\infty). 
$$      
Then for any $0<\ve<1,$  
\begin{align}\label{eq-general-sol-weak-sense-at-0} 
&\left|\iint_{\R^n\times(0,\infty)} \left\{u^m\La \vp -u \vp_t\right\}dx\,dt\right|\notag\\
=&\left| \iint_{\R^n\times(0,\infty)} \left\{u^m\La (\eta_\ve \vp) -u \eta_\ve \vp_t\right\}dx\,dt\right|\notag\\
\leq&C  \left\{ \iint_{\left( B_{2\ve}\setminus B_\ve\right)\times(t_1,t_2)}   \ve^{-2}u^m dx\,dt  + \iint_{B_{2\ve}\times(t_1,t_2)}(u^m+u)dx\,dt\right\}\notag\\ 
\leq&C (t_2-t_1) \left(\int_{B_{2\ve}\setminus B_\ve}\ve^{-2}|x|^{-\frac{m\al}{\be}}dx+\int_{B_{2\ve}}|x|^{-\frac{\al}{\be}}  dx\right)\notag\\ 
\leq&C  \left( \ve^{n-2-\frac{m\al}{\be}}+\ve^{n-\frac{\al}{\be}}\right) 
\end{align}
since $\frac{\al}{\be}=\gamma< n<\frac{n-2}{m}$. 
    Since $0<\ve<1$ is arbitrary,  letting $\ve\to0$ in \eqref{eq-general-sol-weak-sense-at-0}, we deduce that  $u$ solves  
     \eqref{eq-fde}     
   in $\R^n\times(0,\infty)$ in the distributional sense. 
  
 Now  we will prove that $u$ has   initial value $u_0$.    It suffices to prove that  for any $R>0,$ $ \|u(\cdot, t)- u_0\|_{L^1(B_R)}\to 0$  as $t\to0$. 
 For any  $0<\ve<R,$
\begin{align}\label{eq-u-u0-limit}
&\limsup_{t\to 0} \int_{B_R} |u(x,t)-u_0(x)|\,dx\notag\\
\leq&\limsup_{t\to0}\left\{ \int_{B_\ve} |u(x,t)-u_0(x)|\,dx+  \int_{B_R\setminus B_\ve} |u(x,t)-u_0(x)|\,dx\right\}\notag\\
\leq&\limsup_{t\to0}  \int_{B_\ve} C|x|^{-\frac{\al}{\be}}\,dx + \limsup_{t\to0}\int_{B_R\setminus B_\ve} |u(x,t)-u_0(x)|\,dx\notag\\
\leq &C { \ve^{n-\frac{\al}{\be}}} 
\end{align}
since  $u$ is a solution of \eqref{eq-fde-global-except-0}.  Letting $\ve\to 0$ in \eqref{eq-u-u0-limit},
\begin{align*}
\lim_{t\to0} \int_{B_R} |u(x,t)-u_0(x)|dx=0,\quad\forall R>0
\end{align*}
and the corollary follows.
\end{proof}

\subsection{Large time asymptotics} 

In this subsection we will investigate  the large time behavior of the solution $u$ of \eqref{eq-fde-global-except-0}  given  by Theorem  \ref{thm-existence-sol-fde-singular} with  initial value $u_0 $ which  satisfies  \eqref{eq-initial-trapped-particular-sols-existence} for some constants $A_2>A_1>0$.  We will assume that $n\geq3,$ $0< m<\frac{n-2}{n}$, $\frac{2}{1-m}<\gamma<n$, and $\al$, $\be$ be given by  \eqref{eq-def-alpha-beta-gamma} for the rest of the paper. Notice that  such $u_0$  is integrable near the origin and $u\in 
C\left( [0,\infty); L^1_{\loc}(\R^n)\right)\cap C\left( (0,\infty); L^1(\R^n)\right)$. 
 
For any solution $u$ of \eqref{eq-fde} in $\left(\R^n\setminus\{0\}\right)\times(0,\infty),$ let $\tilde u$ be  the  rescaled function defined by \eqref{def-fde-rescaled} for $\be<0$ and  $\al=\frac{2\be-1}{1-m}$. Then the rescaled function  $\tilde u$ satisfes \eqref{eq-fde-rescaled} 
in $\left(\R^n\setminus\{0\}\right)\times (-\infty,\infty)$ in the classical sense and $\tilde U_\ld(y,\tau)= f_\ld(y) $ for any  $(y,\tau)\in \left(\R^n\setminus\{0\}\right)\times (-\infty,\infty)$ and $\lambda>0$.  If $u$ 
 satisfies   \eqref{eq-existence-sols-trapped-self-similar},  then 
$$
f_{\ld_1}(y) \leq \tilde u(y,\tau )\leq f_{\ld_2}(y)\quad\forall (y,\tau)\in \left(\R^n\setminus\{0\}\right)\times(-\infty,\infty)
$$ 
and in this case by the same argument as the proof of Corollary \ref{cor-existence-sol-fde-singular},  $\tilde u$ is a weak solution of \eqref{eq-fde-rescaled} in $\R^n\times(-\infty,\infty)$ since $\frac{2}{1-m}<\gamma<n$. Note that $\tilde u(\cdot, 0)\equiv u(\cdot, 1).$
      
We will first prove a strong contraction principle with weight $|x|^{-\mu_1}$ for such rescaled solutions  where $\mu_1=n-\frac{\al}{\be}=n-\gamma>0.$ We point out that the following strong contraction principle does not hold for the difference $f_{\ld_2}-f_{\ld_1}$ of two self-similar profiles $f_{\ld_1}, f_{\ld_2}$ for $0<\ld_2<\ld_1$ 
 since by  \eqref{eq-asymp-behavior-f-lambdas}, $$  f_{\ld_2}-f_{\ld_1}\not \in     L^1(r^{-\mu_1}; \R^n) .$$

  \begin{lemma}[Strong contraction principle] \label{lem-strict-L1-contraction-new}
Let $n\geq3,$ $0< m<\frac{n-2}{n}$, $\beta<0,\alpha=\frac{2\beta-1}{1-m}$  and $\frac{2}{1-m}<\frac{ \alpha}{ \beta} <n .$   
Let $\tilde u$ and $\tilde v$ be  solutions of \eqref{eq-fde-rescaled} in $\left(\R^n\setminus\{0\}\right)\times(0,\infty)$ with   initial values $\tilde u_0$ and $\tilde v_0$, respectively, such that 
\begin{equation}\label{eq-sol-trapped-particular-sols-rescaled-new}
f_{\ld_1}\leq\tilde u, \tilde  v\leq  f_{\ld_2}\quad\mbox{in $\left(\R^n \setminus\{0\}\right)\times(0,\infty)$}
\end{equation}
for some constants  $\ld_1>\ld_2>0.$ Suppose that 
$$
0\not\equiv\tilde u_0 -\tilde v_0 \in L^1\left(r^{-\mu_1};\R^n\right).
$$ 
Then  
\begin{equation*}
\|\tilde u(\cdot, \tau) -\tilde v(\cdot,\tau)\|_{ L^1\left(r^{-\mu_1}; \R^n\right) } < \| \tilde u_0  -\tilde v_0\|_{ L^1\left(r^{-\mu_1}; \R^n\right)}\quad\forall \tau>0.
\end{equation*}
\end{lemma}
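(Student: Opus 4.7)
My plan is to differentiate the weighted norm $\Phi(\tau) := \int_{\R^n} |\tilde u - \tilde v|(y,\tau)\, |y|^{-\mu_1}\,dy$ directly in the rescaled variables and to exploit a cancellation that is the signature of the critical weight $\mu_1 = n - \gamma$. Writing $w := \tilde u - \tilde v$, Kato's inequality applied to the rescaled equation \eqref{eq-fde-rescaled} gives
\[
|w|_\tau \le \Delta|\tilde u^m - \tilde v^m| + \alpha |w| + \beta y \cdot \nabla |w|
\]
in $\sD'\bigl((\R^n\setminus\{0\}) \times (0,\infty)\bigr)$. I would test against the cutoff $\phi_{\ve,R}(y) := \eta_{\ve,R}(y)\,|y|^{-\mu_1}$ built from the proof of Theorem~\ref{thm-weighted-L1-contraction-new} and integrate the first-order part by parts. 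The decisive identity is
\[
\nabla \cdot \bigl(\beta\,y\,|y|^{-\mu_1}\bigr) = \beta(n-\mu_1)|y|^{-\mu_1} = \beta\gamma|y|^{-\mu_1} = \alpha|y|^{-\mu_1},
\]
which produces an \emph{exact} cancellation of the $\alpha|w|$ and $\beta y\cdot\nabla|w|$ contributions, leaving only the commutator $-\beta \int |w|\,(y\cdot\nabla\eta_{\ve,R})\,|y|^{-\mu_1}\,dy$. This algebraic cancellation, available only at $\mu = \mu_1$, explains simultaneously why $\mu_1$ is the distinguished weight and why Theorem~\ref{thm-weighted-L1-contraction-new} was proved only for $\mu > \mu_1$.

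Next I pass to the limit $\ve \to 0$, $R \to \infty$. The second-derivative contributions $\int |\tilde u^m-\tilde v^m|\bigl\{|y|^{-\mu_1}\Delta\eta_{\ve,R}+2\nabla\eta_{\ve,R}\cdot\nabla|y|^{-\mu_1}\bigr\}dy$ are controlled using $|\tilde u^m-\tilde v^m|\le 2 f_{\ld_2}^m$ and the decay $f_{\ld_2}(y) \lesssim \min(|y|^{-\gamma}, |y|^{-(n-2)/m})$ from \eqref{eq-asymp-behavior-f-lambdas}: the inner annulus contributes $\lesssim \ve^{(1-m)\gamma-2}\to 0$ (by $\gamma>\tfrac{2}{1-m}$), and the outer annulus $\lesssim R^{\gamma-n}\to 0$ (by $\gamma<n$), exactly the two endpoints of the hypothesis on $\gamma$. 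The commutator is bounded by $C\int_{\{\ve\le|y|\le 2\ve\}\cup\{R\le|y|\le 2R\}} |w|\,|y|^{-\mu_1}\,dy$ since $|y\cdot\nabla\eta_{\ve,R}|$ is uniformly bounded; this vanishes as an absolute-continuity-of-integral tail, provided $w(\cdot,\tau)\in L^1(r^{-\mu_1};\R^n)$ at each $\tau$, a fact to be established beforehand as the non-strict endpoint contraction via the same cancellation scheme. In the limit I arrive at
\[
\Phi'(\tau) \le \int |\tilde u^m-\tilde v^m|\,\Delta|y|^{-\mu_1}\,dy = -\mu_1(n-2-\mu_1)\int |\tilde u^m-\tilde v^m|\,|y|^{-\mu_1-2}\,dy,
\]
with coefficient $\mu_1(n-2-\mu_1)>0$ since $0<\mu_1<n-2$. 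If $\Phi(\tau_0)=\Phi(0)$ for some $\tau_0>0$, integrating on $[0,\tau_0]$ forces $\tilde u\equiv\tilde v$ a.e.\ on $(0,\tau_0)\times\R^n$; joint continuity of the solutions up to $\tau=0$ in $L^1_{\loc}$ then contradicts $\tilde u_0\not\equiv\tilde v_0$, whence $\Phi(\tau)<\Phi(0)$ for every $\tau>0$.

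The main obstacle is the endpoint character of $\mu=\mu_1$: a crude pointwise bound $|w|\le U_{\ld_2}$ in the commutator gives a contribution of order $\int_\ve^{2\ve} r^{-\gamma-\mu_1+n-1}dr = \log 2$, which does \emph{not} vanish with $\ve$. One is therefore forced to absorb the commutator by dominated convergence, which presupposes the a priori finiteness of $\Phi(\tau)$ at each $\tau$. Establishing this finiteness — the non-strict endpoint contraction — is the preparatory analytic step that Theorem~\ref{thm-weighted-L1-contraction-new} does not supply, and must itself be carried out by the same cancellation identity rather than by inheritance from the subcritical range $\mu\in(\mu_1,\mu_2)$.
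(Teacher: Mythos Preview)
Your proposal is correct and follows essentially the same route as the paper: Kato's inequality for the rescaled equation, testing against $\eta_{\ve,R}|y|^{-\mu_1}$, the exact cancellation $\alpha+(\mu_1-n)\beta=0$ at the critical weight, and the two--pass structure in which one first uses the crude pointwise bound $|w|\le f_{\lambda_2}$ on the inner commutator (yielding the non-vanishing $O(1)$ term and hence only $\Phi(\tau)\le\Phi(0)+C\tau$) to secure finiteness, and then re-runs the estimate absorbing the commutator by dominated convergence to extract strictness from $\tilde a\,\Delta|y|^{-\mu_1}<0$. The only cosmetic difference is that the paper writes the diffusive term as $\Delta(\tilde a\,q)$ with the mean-value coefficient $\tilde a$, whereas you keep $|\tilde u^m-\tilde v^m|$; these are identical since $\tilde a\,q=|\tilde u^m-\tilde v^m|$.
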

  \begin{proof}
Let $q:=|\tilde u-\tilde v|.$ By the  Kato inequality, 
\begin{equation}\label{eq-q-diff-rescaled-new}
q_\tau\leq \La (\tilde aq)+\beta\Div(yq)+(\al-n\be) q \quad\mbox{in $\sD'\left(\left(\R^n\setminus\{0\}\right)\times(0,\infty)\right)$} ,
\end{equation}
where 
\begin{equation}\label{eq-a-lower-bd-new}
m f_{\ld_2}^{m-1}(y)\leq \tilde a(y,\tau):=\int_0^1 \frac{m ds }{ \left\{s \tilde u+(1-s)\tilde v\right\}^{1-m}} \leq m f_{\ld_1}^{m-1}(y)
\quad\forall y\in\R^n\setminus\{0\}, \tau>0.
\end{equation}
For any $R>2$ and $0<\ve<1,$ let $\eta_{\ve,R}$ be    as in the proof of Theorem \ref{thm-weighted-L1-contraction-new}. 
Multiplying  \eqref{eq-q-diff-rescaled-new}  by $\eta_{\ve,R}(y) |y|^{-\mu_1}$ and  integrating by parts,  for any  $\tau>0,$ 
\begin{equation}\label{eq-q-diff-rescaled-integration-xt-new}
\begin{aligned}
&\int_{\R^n} q (y,\tau)\eta_{\ve,R} (y)|y|^{-\mu_1} dy-\int_{\R^n} q (y,0)\eta_{\ve,R} (y)|y|^{-\mu_1}dy\\
\leq&\int_0^\tau\int_{\R^n} \left\{\tilde a\La |y|^{-\mu_1} -\beta y\cdot\D |y|^{-\mu_1} +(\al-n\be) |y|^{-\mu_1} \right\} q \eta_{\ve,R}  \,dy\,ds\\
&\qquad  +   \int_0^\tau\int_{B_{2R}\setminus B_R}  \left\{\tilde a\La \eta_{\ve,R}  |y|^{-\mu_1}+2 \tilde a \D \eta_{\ve,R}\cdot\D  |y|^{-\mu_1} -\be y\cdot\D \eta_{\ve,R} |y|^{-\mu_1} \right\}q\,dy\,ds\\
&\qquad  +  \int_0^\tau \int_{B_{2\ve}\setminus B_\ve}  \left\{\tilde a\La \eta_{\ve,R}  |y|^{-\mu_1}
+2 \tilde a \D \eta_{\ve,R}\cdot\D  |y|^{-\mu_1} -\be y\cdot\D \eta_{\ve,R}  |y|^{-\mu_1}  \right\}q\,dy\,ds.
\end{aligned}
\end{equation} 
Since  $0<\mu_1<\mu_2<n-2,$ by \eqref{eq-sign-hess-f-theta-new},
\begin{equation}\label{eq-q-diff-rescaled-integration-xt-hessian-minus-sign-new}
\begin{aligned}
\tilde a\, \La |y|^{-\mu_1}-\beta y\cdot \D |y|^{-\mu_1}+(\al-n\beta) |y|^{-\mu_1}
&< \left\{\al +(\mu_1-n)\be\right\}|y|^{-\mu_1}= 0 \quad\mbox{in $\R^n\setminus\{0\}$}.
\end{aligned}
\end{equation}
By    \eqref{eq-sol-trapped-particular-sols-rescaled-new},    \eqref{eq-a-lower-bd-new} 
and  Remark \ref{rmk-scaling-monotonicity},  for any $\tau>0,$ and $R>2,$
\begin{align}\label{eq-q-diff-rescaled-integration-xt-R-infty-new}
& \left|   \int_0^\tau\int_{B_{2R}\setminus B_R}  \left\{\tilde a\La \eta_{\ve,R}  |y|^{-\mu_1}+2 \tilde a \D \eta_{\ve,R}\cdot\D  |y|^{-\mu_1} -\be y\cdot\D \eta_{\ve,R}  |y|^{-\mu_1} \right\}q\,dy\,ds\right|\notag\\
\leq&C \left( R^{-2-\mu_1}  \int_{B_{2R}\setminus B_R}  f_{\ld_2}^{m}\,dy+ R^{-\mu_1} \int_{B_{2R}\setminus B_R}  f_{\ld_2}  \, dy \right)\tau \notag\\
\leq&C \left( R^{-2-\mu_1}  \int_{B_{2R}\setminus B_R}  (|x|^{-\frac{n-2}{m}})^m\,dy+ R^{-\mu_1} \int_{B_{2R}\setminus B_R}|x|^{-\frac{n-2}{m}}\,dy \right)\tau\notag \\
\leq&C\left(  R^{-\mu_1} +R^{n-\frac{n-2}{m}-\mu_1}\right)\tau,
\end{align}
which converges to zero as $R\to\infty$, and        for  any   $\tau>0,$ and $0<\ve<1,$   
\begin{align}\label{eq-q-diff-rescaled-integration-xt-epsilon-0-new}
&\left|\int_0^\tau\int_{B_{2\ve}\setminus B_\ve}\left\{\tilde a\La \eta_{\ve,R} |y|^{-\mu_1}+2 \tilde a \D \eta_{\ve,R}\cdot\D  |y|^{-\mu_1} -\be y\cdot\D \eta_{\ve,R} |y|^{-\mu_1} \right\}q\,dy\,ds\right|\notag\\
\leq&C \left( \ve^{-2-\mu_1}  \int_{B_{2\ve}\setminus B_\ve}  f_{\ld_2}^{m}  dy +\ve^{-\mu_1} \int_{B_{2\ve}\setminus B_\ve}   f_{\ld_2}  dy \right)\tau\notag\\
\leq&C\left(  \ve^{n-2-\frac{m\al}{\be}-\mu_1} +\ve^{n-\frac{\al}{\be}-\mu_1}\right)\tau=C\left(  \ve^{ {1}/{|\beta|}} +1\right) \tau\leq C\tau. 
\end{align}
 Hence
    letting  $R\to\infty$ and $\ve\to0$ in \eqref{eq-q-diff-rescaled-integration-xt-new},   by \eqref{eq-q-diff-rescaled-integration-xt-hessian-minus-sign-new}, \eqref{eq-q-diff-rescaled-integration-xt-R-infty-new},  and \eqref{eq-q-diff-rescaled-integration-xt-epsilon-0-new},
\begin{align}\label{eq-weighted-l1-bd}
&\int_{\R^n} q(y,\tau)  |y|^{-\mu_1} dy- \int_{\R^n} q(y,0) |y|^{-\mu_1} dy \notag\\
\leq&\limsup_{R\to\infty,\ve\to0} \int_0^\tau\int_{\R^n} \left\{\tilde a\La |y|^{-\mu_1} -\beta y\cdot\D  |y|^{-\mu_1} +(\al-n\be)  |y|^{-\mu_1} \right\} q \eta_{\ve,R}\,  dy\,ds + C\tau\notag\\ 
\leq&C \tau. 
\end{align}
Since $\tilde u_0-\tilde v_0\in L^1( r^{-\mu_1};\R^n),$   by \eqref{eq-weighted-l1-bd}  $\tilde u(\cdot,\tau )-\tilde v(\cdot,\tau)\in L^1( r^{-\mu_1};\R^n)$ for any $\tau>0$ and
\begin{equation*}
\int_0^\tau\int_{\R^n} q(y,s)  |y|^{-\mu_1} dy\,ds\leq \tau\int_{\R^n} q(y,0)  |y|^{-\mu_1}dy+C\tau^2,\quad\forall \tau>0. 
\end{equation*} 
Then by  \eqref{eq-q-diff-rescaled-integration-xt-epsilon-0-new}, 
\begin{align}\label{eq-weighted-L1-strict-contraction-pre-near-origin-new}
&\left|\int_0^\tau\int_{B_{2\ve}\setminus B_\ve}  \left\{\tilde a\La \eta_{\ve,R} |y|^{-\mu_1} +2 \tilde a \D \eta_{\ve,R}\cdot\D    |y|^{-\mu_1} -\be y\cdot\D \eta_{\ve,R}   |y|^{-\mu_1}  \right\}q\,dy\,ds\right|\notag\\
\leq&C \left(  \ve^{ {1}/{|\be|}\,}\tau+ \int_0^\tau\int_{B_{2\ve}\setminus B_\ve}  q(y,s)  |y|^{-\mu_1}  \,dy\,ds \right)\notag\\
\to&0\qquad\qquad\qquad\mbox{ as }\ve\to 0.
\end{align}
Therefore, letting  $R\to\infty$ and $\ve\to0$ in \eqref{eq-q-diff-rescaled-integration-xt-new},  
by  \eqref{eq-q-diff-rescaled-integration-xt-hessian-minus-sign-new}, \eqref{eq-q-diff-rescaled-integration-xt-R-infty-new},    \eqref{eq-weighted-L1-strict-contraction-pre-near-origin-new},  
and the assumption that     $\tilde u_0-\tilde v_0\not\equiv 0$ on $\R^n\setminus\{0\}$,  we deduce  that for any $\tau>0,$
     \begin{align*}
& \int_{\R^n} q(y,\tau)  |y|^{-\mu_1}dy- \int_{\R^n} q(y,0)  |y|^{-\mu_1}dy\\
  &\leq    \limsup_{R\to\infty,\ve\to0} \int_0^\tau\int_{\R^n} \left\{\tilde  a\La  |y|^{-\mu_1}-\beta y\cdot\D  |y|^{-\mu_1} +(\al-n\be) |y|^{-\mu_1} \right\} q \eta_{\ve,R}\,  dy\,ds<0
    \end{align*}
which  finishes the proof of the lemma. 
\end{proof}

\begin{lemma}[cf. Lemma 1 of \cite{OR}]\label{lem-Osher-Ralston-new}
Let $n\geq3,$ $0< m<\frac{n-2}{n}$, $\beta<0,\alpha=\frac{2\beta-1}{1-m},$ and $\frac{2}{1-m}<\frac{\alpha}{\beta}<n.$ 
Let $\tilde u$, $\tilde v$ be solutions of \eqref{eq-fde-rescaled} in $\left(\R^n\setminus\{0\}\right)\times(0,\infty)$ with initial values  $\tilde u_0$ and $\tilde v_0$ respectively, which satisfy  \eqref{eq-sol-trapped-particular-sols-rescaled-new} for some constants  $\ld_1>\ld_2>0.$ Suppose that there exists a constant $\ld_0\in[\ld_2,\ld_1]$ such that
$$
\tilde u_0 - f_{\ld_0}\in  L^1\left( r^{-\mu_1} ; \R^n\right)  
$$ 
and  
\begin{equation}\label{eq-Osher-Ralston-cond}
\lim_{i\to \infty }\|\tilde u(\cdot, \tau_i)-\tilde v_0\|_{L^1\left( r^{-\mu_1} ;\R^n\right)}=0
\end{equation}  
for some sequence $\{\tau_i\}_{i=1}^\infty$ such that $\tau_i\to\infty$  as $i\to\infty.$ Then 
\begin{equation}\label{eq-OR-new-tilde-v0}
\| \tilde v_0 -f_{\ld_0}\|_{ L^1\left(r^{-\mu_1}  ; \R^n\right) } 
\leq   \| \tilde u_0  - f_{\ld_0}\|_{ L^1\left( r^{-\mu_1}  ; \R^n\right)}
\end{equation}
and 
\begin{equation}\label{eq-OR-new}
\|\tilde v(\cdot, \tau)-f_{\ld_0}\|_{L^1\left( r^{-\mu_1};\R^n\right)}=\|\tilde v_0-f_{\ld_0}\|_{L^1\left(r^{-\mu_1} ;\R^n\right)}\quad\forall \tau>0.
 \end{equation}  
 \end{lemma}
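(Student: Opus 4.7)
The plan is to follow the classical Osher--Ralston argument \cite{OR}, using that $f_{\ld_0}$ is a stationary solution of the rescaled equation \eqref{eq-fde-rescaled} and that $f_{\ld_1} \leq f_{\ld_0} \leq f_{\ld_2}$ on $\R^n\setminus\{0\}$, which follows from $\ld_0\in[\ld_2,\ld_1]$ and the monotonicity in Remark \ref{rmk-scaling-monotonicity}. Therefore the pairs $(\tilde u, f_{\ld_0})$ and $(\tilde v, f_{\ld_0})$ both fall under the hypotheses of the weighted contraction established in the course of proving Lemma \ref{lem-strict-L1-contraction-new}, namely
$$
\|\tilde w(\cdot,\tau_2)-f_{\ld_0}\|_{L^1(r^{-\mu_1};\R^n)}\le \|\tilde w(\cdot,\tau_1)-f_{\ld_0}\|_{L^1(r^{-\mu_1};\R^n)},\qquad 0\le\tau_1\le\tau_2,
$$
for either $\tilde w=\tilde u$ or $\tilde w=\tilde v$, provided the relevant difference lies in $L^1(r^{-\mu_1};\R^n)$.

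First I would show $\tilde v_0-f_{\ld_0}\in L^1(r^{-\mu_1};\R^n)$ so that this contraction can be invoked for $\tilde v$. By the triangle inequality,
$$
\|\tilde v_0 - f_{\ld_0}\|_{L^1(r^{-\mu_1})}\le \|\tilde v_0 - \tilde u(\cdot,\tau_i)\|_{L^1(r^{-\mu_1})} + \|\tilde u(\cdot,\tau_i) - f_{\ld_0}\|_{L^1(r^{-\mu_1})};
$$
the first term tends to $0$ by \eqref{eq-Osher-Ralston-cond}, while the second is bounded by $\|\tilde u_0 - f_{\ld_0}\|_{L^1(r^{-\mu_1})}$ by the contraction applied to $(\tilde u, f_{\ld_0})$. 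Hence the map $\tau\mapsto \|\tilde v(\cdot,\tau)-f_{\ld_0}\|_{L^1(r^{-\mu_1})}$ is a bounded nonincreasing function of $\tau\ge 0$.

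Next, I would set $L:=\lim_{s\to\infty}\|\tilde u(\cdot,s)-f_{\ld_0}\|_{L^1(r^{-\mu_1})}\in[0,\|\tilde u_0-f_{\ld_0}\|_{L^1(r^{-\mu_1})}]$, which exists by monotonicity. For any fixed $\tau\ge 0$, since the rescaled equation \eqref{eq-fde-rescaled} is autonomous the function $(y,s)\mapsto \tilde u(y,\tau_i+s)$ is a solution starting at $\tilde u(\cdot,\tau_i)$ with the same trapping \eqref{eq-sol-trapped-particular-sols-rescaled-new}; applying the weighted contraction to this pair with $\tilde v$ yields
$$
\|\tilde u(\cdot,\tau_i+\tau)-\tilde v(\cdot,\tau)\|_{L^1(r^{-\mu_1})}\le \|\tilde u(\cdot,\tau_i)-\tilde v_0\|_{L^1(r^{-\mu_1})}\xrightarrow[i\to\infty]{} 0
$$
by \eqref{eq-Osher-Ralston-cond}. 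Combining this with the triangle inequality gives
$$
\|\tilde v(\cdot,\tau)-f_{\ld_0}\|_{L^1(r^{-\mu_1})} = \lim_{i\to\infty}\|\tilde u(\cdot,\tau_i+\tau)-f_{\ld_0}\|_{L^1(r^{-\mu_1})} = L\qquad\text{for every }\tau\ge 0,
$$
which is constant in $\tau$. Evaluating at $\tau=0$ gives \eqref{eq-OR-new-tilde-v0} (since $L\le\|\tilde u_0-f_{\ld_0}\|_{L^1(r^{-\mu_1})}$), and for $\tau>0$ together with $\tau=0$ gives \eqref{eq-OR-new}.

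The main obstacle I expect is keeping the bookkeeping straight: the weighted $L^1$-contraction statement in Lemma \ref{lem-strict-L1-contraction-new} is phrased for two rescaled solutions with initial data at $\tau=0$, whereas here I need it for the time-shifted pair $(\tilde u(\cdot,\tau_i+\cdot),\tilde v(\cdot,\cdot))$ and for the pairs $(\tilde u,f_{\ld_0})$ and $(\tilde v,f_{\ld_0})$. Autonomy of \eqref{eq-fde-rescaled} handles the time-shift, the trapping \eqref{eq-sol-trapped-particular-sols-rescaled-new} is preserved under time-shift, and the weighted integrability of the initial difference was verified in the previous paragraph for $\tilde v$; for $\tilde u$ at shifted times the same contraction applied first to $(\tilde u,\tilde v)$ justifies that $\tilde u(\cdot,\tau_i)-\tilde v_0\in L^1(r^{-\mu_1})$ for all large $i$. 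No new estimate beyond Lemma \ref{lem-strict-L1-contraction-new} should be required.
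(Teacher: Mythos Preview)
Your proposal is correct and follows essentially the same Osher--Ralston argument as the paper: both proofs use the non-strict weighted $L^1$-contraction (extracted from the proof of Lemma~\ref{lem-strict-L1-contraction-new}) applied to the pairs $(\tilde u,f_{\ld_0})$, $(\tilde v,f_{\ld_0})$, and the time-shifted pair $(\tilde u(\cdot,\tau_i+\cdot),\tilde v)$, together with \eqref{eq-Osher-Ralston-cond} and the triangle inequality. Your explicit introduction of the limit $L=\lim_{s\to\infty}\|\tilde u(\cdot,s)-f_{\ld_0}\|_{L^1(r^{-\mu_1})}$ is a minor cosmetic difference that makes the argument slightly cleaner, but the logical content is identical to the paper's chain \eqref{eq-lem-OR-tilde-v0-new}.
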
 
\begin{proof}    
We will use a modification of the proof of \cite{OR} to prove this lemma.  
By  the proof of Lemma \ref{lem-strict-L1-contraction-new}  and  Fatou's lemma together with \eqref{eq-Osher-Ralston-cond},  
\begin{align*}
\| \tilde u(\cdot, \tau_j)  - f_{\ld_0}\|_{ L^1\left(r^{-\mu_1}  ; \R^n\right) }
&\leq  \| \tilde u_0  - f_{\ld_0}\|_{ L^1\left( r^{-\mu_1}  ; \R^n\right) } \qquad\forall j\in\N\notag\\
\Rightarrow\qquad   \| \tilde v_0 -f_{\ld_0}\|_{ L^1\left(r^{-\mu_1}  ; \R^n\right) } 
&\leq   \| \tilde u_0  - f_{\ld_0}\|_{ L^1\left( r^{-\mu_1}  ; \R^n\right) }\qquad\mbox{as $j\to\infty$}
\end{align*}
and \eqref{eq-OR-new-tilde-v0} holds.
Then    the proof of Lemma \ref{lem-strict-L1-contraction-new} implies 
\begin{equation}\label{eq-lem-OR-contraction-new} 
\|\tilde v(\cdot,\tau)-f_{\ld_0}\|_{L^1( r^{-\mu_1};\R^n)}\leq\|\tilde v_0-f_{\ld_0}\|_{L^1( r^{-\mu_1};\R^n)}\quad\forall \tau>0.
\end{equation}
By \eqref{eq-OR-new-tilde-v0} and the proof of Lemma \ref{lem-strict-L1-contraction-new},  we have that for any $i\in\N,$ 
\begin{align*} 
\|\tilde u(\cdot,\tau_i)-\tilde v_0\|_{L^1\left(r^{-\mu_1};\R^n\right)}&\leq\|\tilde u(\cdot,\tau_i)-f_{\ld_0}\|_{L^1\left(r^{-\mu_1};\R^n\right)}+\|\tilde v_0-f_{\ld_0}\|_{ L^1\left( r^{-\mu_1};\R^n\right) } \\
&\leq 2\|\tilde u_0-f_{\ld_0}\|_{ L^1\left( r^{-\mu_1};\R^n\right) },
\end{align*}
and hence for any $\tau>0$ and $i\in\N,$
\begin{equation}\label{eq-lem-OR-tilde-v0-new} 
\begin{aligned} 
\|\tilde v_0 -f_{\ld_0}\|_{ L^1\left( r^{-\mu_1}  ; \R^n\right) } 
&\leq\liminf_{j\to\infty}\| \tilde u(\cdot, \tau_j)  - f_{\ld_0}\|_{ L^1\left( r^{-\mu_1}  ; \R^n\right) }\\
&\leq\| \tilde u(\cdot,\tau+ \tau_i)  - f_{\ld_0}\|_{ L^1\left( r^{-\mu_1} ; \R^n\right) }\\
&\leq\| \tilde u(\cdot,\tau+ \tau_i)-\tilde v(\cdot,\tau) \|_{ L^1\left( r^{-\mu_1}; \R^n\right) }+\| \tilde v(\cdot,\tau) - f_{\ld_0}\|_{ L^1\left(r^{-\mu_1}  ; \R^n\right) }\\
&\leq  \| \tilde u(\cdot, \tau_i) -\tilde v_0 \|_{ L^1\left( r^{-\mu_1}  ; \R^n\right) }+\| \tilde v(\cdot,\tau) - f_{\ld_0}\|_{ L^1\left( r^{-\mu_1} ; \R^n\right) }. 
\end{aligned}
\end{equation}     
Letting $i\to \infty$ in \eqref{eq-lem-OR-tilde-v0-new},  
\begin{align*}
\| \tilde v_0 -f_{\ld_0}\|_{ L^1\left( r^{-\mu_1} ; \R^n\right) } \leq \| \tilde v(\cdot,\tau) - f_{\ld_0}\|_{ L^1\left( r^{-\mu_1} ; \R^n\right) },\quad\forall \tau>0,
\end{align*}
which together with \eqref{eq-lem-OR-contraction-new} implies  \eqref{eq-OR-new}.  
\end{proof} 

We are now ready to prove the  local uniform  convergence   of the rescaled function of the solution of  \eqref{eq-fde-global-except-0} to an eternal solution of   \eqref{eq-fde-rescaled} in $\left(\R^n\setminus\{0\}\right)\times(-\infty,\infty)$ as well as   
 convergence in the weighted  $L^1$-space with weight $|x|^{-\mu_1}$ as $\tau\to\infty$.

\begin{lemma} \label{lem-uniform-convergence-tilde-u-compact-sets-new}
Let $n\geq3,$ $0< m<\frac{n-2}{n}$, $\frac{2}{1-m}<\gamma<n$, and   let $\al$, $\be$ be given by \eqref{eq-def-alpha-beta-gamma}. Let $u_0$ satisfy \eqref{eq-initial-trapped-particular-sols-existence}  and \eqref{eq-sol-initial-value-similar-to-U-ld0-new}
for some constants $A_2\ge A_0\ge A_1>0$ and $\mu_1<\mu<\mu_2$, where $\mu_1$, $\mu_2$ are given by \eqref{eq-weighted-L1-contraction-mu}.  Let $u$ be the solution of \eqref{eq-fde-global-except-0} which satisfies \eqref{eq-existence-sols-trapped-self-similar},  where  $\ld_i=A_i^{1/ (\frac{2}{1-m}-\gamma )}$ for $i=1,2$, and let $\tilde u(y,\tau)$ be given by \eqref{def-fde-rescaled}.  Let $\{\tau_i\}_{i=1}^\infty$ be a sequence such that $\tau_i\to\infty$  as $i\to\infty$  and  
\begin{equation}\label{eq-def-scaled-time-shift}
\tilde u_i(\cdot, \tau):=\tilde u(\cdot, \tau_i+\tau)\quad\forall \tau\in\R.
\end{equation} 
Then there exists a subsequence of $\{\tilde u_i\}_{i=1}^\infty$, which we still denote by $\{\tilde u_i\}_{i=1}^\infty$,  and  an eternal solution $\tilde v$ of \eqref{eq-fde-rescaled} in $\left(\R^n\setminus\{0\}\right)\times(-\infty,\infty)$  such  that  $\tilde u_i$ converges to $\tilde v$ uniformly on every compact subset of $\left(\R^n\setminus\{0\}\right)\times(-\infty,\infty)$ as $i\to\infty.$  
Moreover 
\begin{equation}\label{eq-claim-tilde-u-0-u(1)-new}
\tilde u(\cdot,0)  -f_{\ld_0} \in L^1\left( r^{-\mu_1}; \R^n\right)
\end{equation}
where  $\ld_0:=A_0^{1/ (\frac{2}{1-m}-\gamma )}$ and 
\begin{equation}\label{eq-rescaled-sol-convergence-wL1-new}
\lim_{i\to\infty}\,\|\tilde u_i(\cdot, \tau)-\tilde v(\cdot,\tau)\|_{L^1\left( r^{-\mu_1};\R^n\right)}= 0 \quad\forall \tau\in\R.
\end{equation}
\end{lemma}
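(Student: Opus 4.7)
The plan is to proceed in three stages: compact-set subsequential convergence to an eternal solution, integrability of $\tilde u(\cdot,0)-f_{\ld_0}$ with respect to $|y|^{-\mu_1}$, and upgrading the local convergence to the weighted $L^1$-convergence \eqref{eq-rescaled-sol-convergence-wL1-new}.

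First I would establish the subsequential limit $\tilde v$. On any compact $K\subset\R^n\setminus\{0\}$ the trapping \eqref{eq-sol-trapped-particular-sols-rescaled-0-new} provides uniform positive upper and lower bounds for the sequence $\tilde u_i$, so the coefficients of \eqref{eq-fde-rescaled} are uniformly bounded and uniformly elliptic along the sequence on $K\times[-T,T]$. Parabolic Schauder estimates \cite{LSU} then yield uniform $C^{2,1}$ bounds, and a standard Ascoli--Arzel\`a plus diagonal argument produces a subsequence converging in $C^{2,1}_{\loc}((\R^n\setminus\{0\})\times\R)$ to some $\tilde v$ that solves \eqref{eq-fde-rescaled} classically and satisfies $f_{\ld_1}\le\tilde v\le f_{\ld_2}$, making it an eternal solution.

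Next, to prove \eqref{eq-claim-tilde-u-0-u(1)-new}, I apply Theorem \ref{thm-weighted-L1-contraction-new} to the pair $u$, $U_{\ld_0}$ with weight $|x|^{-\mu}$; both lie between $U_{\ld_1}$ and $U_{\ld_2}$ since $\ld_1\ge\ld_0\ge\ld_2$ and $f_\ld$ is monotone in $\ld$ (Remark \ref{rmk-scaling-monotonicity}), and \eqref{eq-sol-initial-value-similar-to-U-ld0-new} bounds the initial difference. Evaluating at $t=1$, so that $\tilde u(\cdot,0)=u(\cdot,1)$ and $U_{\ld_0}(\cdot,1)=f_{\ld_0}$, gives $\tilde u(\cdot,0)-f_{\ld_0}\in L^1(r^{-\mu};\R^n)$. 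Splitting $\R^n=B_1\cup(\R^n\setminus B_1)$: on $B_1$ the elementary bound $|y|^{-\mu_1}\le|y|^{-\mu}$ (valid because $\mu>\mu_1$) controls the $\mu_1$-weighted integral by the $\mu$-weighted one; on $\R^n\setminus B_1$ the trapping combined with \eqref{eq-behavior-f1_lambda} forces $|\tilde u(\cdot,0)-f_{\ld_0}|\le f_{\ld_2}-f_{\ld_1}\le C|y|^{-(n-2)/m}$, and the inequality $\frac{n-2}{m}+\mu_1>n$ (equivalent to $\gamma<\frac{n-2}{m}$, which is built into the standing assumption $\mu_1<\mu_2$) gives integrability at infinity.

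Finally, for \eqref{eq-rescaled-sol-convergence-wL1-new} the change of variable $x=t^\be y$ with $\al=\be\gamma$ and $\mu_1=n-\gamma$ produces the scaling identity
\[
\|\tilde u(\cdot,\tau)-f_{\ld_0}\|_{L^1(r^{-\mu})} = e^{(\mu-\mu_1)\be\,\tau}\,\|u(\cdot,e^\tau)-U_{\ld_0}(\cdot,e^\tau)\|_{L^1(r^{-\mu})}.
\]
Because $(\mu-\mu_1)\be<0$, Theorem \ref{thm-weighted-L1-contraction-new} makes this quantity uniformly bounded for $\tau\ge 0$, so $\|\tilde u_i(\cdot,\tau)-f_{\ld_0}\|_{L^1(r^{-\mu})}$ is uniformly bounded for large $i$; Fatou's lemma transfers this bound to $\tilde v$. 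I then split
\[
\int_{\R^n}|\tilde u_i-\tilde v|(y,\tau)\,|y|^{-\mu_1}\,dy = \int_{B_\ve}+\int_{B_R\setminus B_\ve}+\int_{\R^n\setminus B_R},
\]
bound the first piece by $\ve^{\mu-\mu_1}$ times the uniform $\mu$-weighted bounds for $\tilde u_i-f_{\ld_0}$ and $\tilde v-f_{\ld_0}$, the tail by $C\int_{|y|\ge R}|y|^{-(n-2)/m-\mu_1}\,dy$, and the middle piece by the uniform $C^{2,1}$-convergence on the compact annulus from Stage 1; choosing $\ve$ small, $R$ large, and then $i$ large finishes the proof. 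The main obstacle is the mismatch between the two weights: the hypothesis controls the stronger weight $|x|^{-\mu}$ near the origin but not at infinity, so the near-origin contribution must be absorbed through the $\mu$-weighted contraction while the far-field contribution must be extracted from the trapping and the explicit $|x|^{-(n-2)/m}$ decay of the self-similar profiles.
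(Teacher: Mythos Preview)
Your proposal is correct and follows essentially the same approach as the paper: parabolic Schauder estimates plus Ascoli--Arzel\`a for the eternal limit, the split $B_1\cup(\R^n\setminus B_1)$ with the $\mu$-weighted contraction near the origin and the $|y|^{-(n-2)/m}$ decay at infinity for \eqref{eq-claim-tilde-u-0-u(1)-new}, and the same three-region decomposition with the scaling identity $t^{\be(\mu-\mu_1)}$ for \eqref{eq-rescaled-sol-convergence-wL1-new}. The only organizational difference is that the paper invokes the proof of Lemma~\ref{lem-strict-L1-contraction-new} to obtain a $\mu_1$-weighted contraction for $\tilde u$ (and hence, via Fatou, the bound $\tilde v(\cdot,\tau)-f_{\ld_0}\in L^1(r^{-\mu_1})$), whereas you stay with the $\mu$-weight of Theorem~\ref{thm-weighted-L1-contraction-new} throughout and handle the near-origin piece via $|y|^{-\mu_1}\le\ve^{\mu-\mu_1}|y|^{-\mu}$; both routes are valid and the underlying computation \eqref{eq-rescaled-convergene-split-outside-before-scale-new} in the paper is exactly your scaling identity.
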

\begin{proof} 
 Since $\tilde u$  satisfies  \eqref{eq-fde-rescaled} and \eqref{eq-sol-trapped-particular-sols-rescaled-0-new} in $\left(\R^n\setminus\{0\}\right)\times(-\infty,\infty)$, the equation \eqref{eq-fde-rescaled} for $\tilde u_i$ is uniformly parabolic in $\cA_R\times(-\infty,\infty) $ for any $R>1$. Then by the parabolic Schauder estimates \cite{LSU}, the sequence $\{ \tilde u_i\}_{i=1}^\infty$ is equi-continuous  in $C^{2,1}(K)$ for any compact set $K\subset(\R^n\setminus\{0\})\times(-\infty,\infty).$ By the Ascoli Theorem and a diagonalization argument, there exists a subsequence of the sequence $\{\tilde u_i\}_{ i=1}^\infty$, which we still denote by $\{\tilde u_i\}_{i=1}^\infty$ and some function $\tilde v \in C^{2,1}\left(\left(\R^n\setminus\{0\}\right)\times(-\infty,\infty)\right)$ such that $\tilde u_i$ converges to $\tilde v $ uniformly in $C^{2,1}(K)$ as $i\to\infty$ for any compact set $K\subset(\R^n\setminus\{0\})\times(-\infty,\infty)$. Then $\tilde v$ is an eternal solution of \eqref{eq-fde-rescaled} in $\left(\R^n\setminus\{0\}\right)\times(-\infty,\infty)$  and  satisfies   
\begin{equation}\label{eq-sol-trapped-particular-sols-rescaled-0-new-tilde-v}
f_{\ld_1}   \leq   \tilde v \leq  f_{\ld_2} \quad\mbox{ in   {  $\left(\R^n\setminus\{0\}\right)\times(-\infty,\infty)$}.  }
\end{equation}  

We next observe that by \eqref{eq-existence-sols-trapped-self-similar},  \eqref{eq-sol-initial-value-similar-to-U-ld0-new},       
and Theorem \ref{thm-weighted-L1-contraction-new}, 
\begin{align*}
&\int_{ \R^n}|\tilde u(y,0)  -f_{\ld_0}(y) | |y|^{-\mu_1} dy\\ 
=&\int_{\R^n}|  u(y, 1)-U_{\ld_0}(y,1)  | |y|^{-\mu_1}dy\\
\leq&\int_{B_ 1}|  u(y, 1)-U_{\ld_0}(y,1) |  |y|^{-\mu}dy+   \int_{\R^n\setminus B_ 1}|  u(y, 1)-U_{\ld_0}(y,1) ||y|^{-\mu_1}dy\\
\leq&\int_{\R^n}|  u_0(y)-A_0|y|^{-\gamma} | |y|^{-\mu}dy+  2 \int_{\R^n\setminus B_ 1} f_{\ld_2}(y) |y|^{-\mu_1}   dy\\
\leq&\int_{\R^n}|  u_0(y)-A_0|y|^{-\gamma} | |y|^{-\mu}dy+  C \int_{1}^\infty  r^{n-1-\frac{n-2}{m}-\mu_1}  dr\\
\leq&\int_{\R^n}|  u_0(y)-A_0|y|^{-\gamma} ||y|^{-\mu}dy+  C' 
\end{align*} 
for some constants $C>0$, $C'>0$ and \eqref{eq-claim-tilde-u-0-u(1)-new} follows. Now we will prove \eqref{eq-rescaled-sol-convergence-wL1-new}. By  the proof of Lemma \ref{lem-strict-L1-contraction-new} and the Fatou Lemma,  
\begin{align}\label{eq-rescaled-sol-convergence-wL1-tilde-u_i-fatou-new} 
&\| \tilde u_i(\cdot, \tau) -f_{\ld_0}\|_{ L^1\left( r^{-\mu_1} ; \R^n\right) } \leq  \| \tilde u(\cdot,0)  - f_{\ld_0}\|_{ L^1\left( r^{-\mu_1} ; \R^n\right) }\quad\forall \tau  \geq -\tau_i\notag\\
\Rightarrow\quad&\|\tilde v(\cdot,\tau)-f_{\ld_0}\|_{L^1\left(r^{-\mu_1};\R^n\right) }\leq\|\tilde u(\cdot,0)-f_{\ld_0}\|_{ L^1\left( r^{-\mu_1};\R^n\right)}\quad\,\,\forall \tau   \in\R\quad\mbox{ as }i\to\infty.
\end{align}
For any $\tau\in\R$ and $R>1$,
\begin{equation}\label{eq-rescaled-convergene-split-new}
\begin{aligned}
&\int_{\R^n}|\tilde u_i(y, \tau)-\tilde v(y,\tau) ||y|^{-\mu_1}dy\\
\leq &\int_{\cA_R}|\tilde u_i(y, \tau)-\tilde v(y,\tau) ||y|^{-\mu_1}dy \\
&\qquad+\int_{\R^n\setminus \cA_R}|\tilde u_i(y, \tau)-f_{\ld_0}(y) ||y|^{-\mu_1}dy+  \int_{\R^n\setminus \cA_R}|\tilde v(y, \tau)-f_{\ld_0}(y) ||y|^{-\mu_1}dy.
\end{aligned}
\end{equation}
Let us fix $\tau\in\R,$ and let $\ve>0.$ By \eqref{eq-sol-trapped-particular-sols-rescaled-0-new} and  \eqref{eq-rescaled-sol-convergence-wL1-tilde-u_i-fatou-new}, there exists a constant  $R_1>1$ such that    for any $ R\geq R_1,$ 
\begin{equation}\label{eq-rescaled-convergene-split-outside-anulus-new-tilde-v}
\int_{\R^n\setminus \cA_R}|\tilde v(y, \tau)-f_{\ld_0} (y)||y|^{-\mu_1}dy\leq \ve,
\end{equation}
and 
\begin{align}\label{eq-rescaled-convergene-split-outside-anulus-new}
\int_{\R^n\setminus B_R}|\tilde u_i(y, \tau)-f_{\ld_0}(y) | \cdot |y|^{-\mu_1}dy 
\le&2 \int_{\R^n\setminus B_R} f_{\ld_2}(y)|y|^{-\mu_1} dy\leq C\int_{\R^n\setminus B_{R_1}}|y|^{-\frac{n-2}{m}-\mu_1} dy\notag\\
\le&CR_1^{-(\frac{n-2}{m}-\gamma)}\leq  \ve\qquad\forall i\in\N.
\end{align}
Let $t_i:= e^{\tau+\tau_i}$ for $i\in\N.$  Then by  \eqref{eq-sol-initial-value-similar-to-U-ld0-new}  and Theorem \ref{thm-weighted-L1-contraction-new},
\begin{align}\label{eq-rescaled-convergene-split-outside-before-scale-new}
\int_{B_{1/R_1}}|\tilde u_i(y, \tau)-f_{\ld_0}(y) ||y|^{-\mu_1}dy\leq&\int_{B_{1/R_1}}|\tilde u_i(y, \tau)-f_{\ld_0}(y) ||y|^{-\mu}dy\notag\\
=&t_i^{\,\al-n\be} \int_{B_{t_i^{\,\be}/R_1}}|u(x,  t_i)-U_{\ld_0} (x, t_i)|  |t_i^{-\be}x|^{-\mu}dx\notag\\
=&t_i^{\,\be(\mu-\mu_1)} \int_{B_{t_i^{\,\be}/R_1}}|    u(x,  t_i)-U_{\ld_0} (x, t_i)||x|^{-\mu}dx\notag\\
\le&t_i^{ \,\be (\mu-\mu_1)} \int_{\R^n}\left|    u_0(x)-A_0|x|^{-\gamma}  \right| |x|^{-\mu}dx.
\end{align}  
Thus by \eqref{eq-rescaled-convergene-split-new}, \eqref{eq-rescaled-convergene-split-outside-anulus-new-tilde-v},  
\eqref{eq-rescaled-convergene-split-outside-anulus-new}   and \eqref{eq-rescaled-convergene-split-outside-before-scale-new},    we deduce that  for any   $i\in\N,$
\begin{align}\label{eq-tilde-ui-tilde-v-l1-contraction}
&\int_{\R^n}|\tilde u_i(y, \tau)-\tilde v(y,\tau) | |y|^{-\mu_1} dy\notag\\
\leq&   \int_{\cA_{R_1}}|\tilde u_i(y, \tau)-\tilde v(y,\tau) | |y|^{-\mu_1}dy
+e^{ \,\be ({\mu}-{\mu_1})({\tau+\tau_i})} \int_{\R^n}\left|     u_0(x)-A_0|x|^{-\gamma}\right| |x|^{-\mu} dx +2 \ve. 
\end{align}
 Since $\be({\mu}-{\mu_1})<0,$ letting $i\to\infty$  in \eqref{eq-tilde-ui-tilde-v-l1-contraction}, 
 by the   uniform convergence of $\tilde u_i$ to $\tilde v$ on each compact subset of $\left(\R^n\setminus\{0\}\right)\times (-\infty,\infty)$, we obtain that  
  \begin{equation*}
 \begin{aligned}
 \limsup_{i\to\infty }\int_{\R^n}|\tilde u_i(y, \tau)-\tilde v(y,\tau) ||y|^{-\mu_1}dy& \leq  2\ve.
 \end{aligned}
 \end{equation*} 
 Since $\ve>0$ is arbitrary,  \eqref{eq-rescaled-sol-convergence-wL1-new} holds. 
 \end{proof}
 
\vspace{0.08in}

\noindent{\it Proof of Theorem \ref{thm-fde-rescaled-asymptotic}:}  
Let $\{\tau_i\}_{i=1}^\infty$ be any sequence such that $\tau_i\to\infty$  as $i\to\infty,$ and let  $\tilde u_i$  be given by \eqref{eq-def-scaled-time-shift}. By Lemma \ref{lem-uniform-convergence-tilde-u-compact-sets-new} there exists  a subsequence of the sequence $\{\tilde u_i\}_{i=1}^\infty$, which we still denote by $\{\tilde u_i\}_{i=1}^\infty$, that converges to an eternal solution  $\tilde v(y,\tau)$ of \eqref{eq-fde-rescaled} in $(\R^n\setminus\{0\})\times(-\infty,\infty)$ uniformly on  any compact subset of $\left(\R^n\setminus\{0\}\right)\times(-\infty,\infty)$ as $i\to\infty,$ and \eqref{eq-claim-tilde-u-0-u(1)-new} and  \eqref{eq-rescaled-sol-convergence-wL1-new} hold. 
    
Let $\tilde v_0(x)=\tilde v(x,0)$. Then by Lemma \ref{lem-Osher-Ralston-new}, \eqref{eq-OR-new-tilde-v0} and  \eqref{eq-OR-new} hold.  We   claim that $\tilde v_0 \equiv f_{\ld_0}$ in $\R^n\setminus\{0\}.$ Suppose to the contrary that $\tilde v_0\not\equiv f_{\ld_0}$ on $\R^n\setminus\{0\}$. Since $\tilde v$ satisfies \eqref{eq-sol-trapped-particular-sols-rescaled-0-new-tilde-v} with $\ld_i=  A_i^{1/ (\frac{2}{1-m}-\gamma )}$,  $i=1,2,$  by Lemma \ref{lem-strict-L1-contraction-new} together with \eqref{eq-OR-new-tilde-v0},
\begin{equation*}
\|\tilde v(\cdot,\tau)-f_{\ld_0}\|_{ L^1\left( r^{-\mu_1} ; \R^n\right) }< \| \tilde v(\cdot,0)-f_{\ld_0}\|_{ L^1\left( r^{-\mu_1} ;\R^n\right) }\quad\forall \tau>0
\end{equation*}
which contradicts \eqref{eq-OR-new}. Thus we conclude  that $\tilde v_0\equiv f_{\ld_0}$ in $\R^n\setminus\{0\},$ and    $\tilde u_i(\cdot,0)= \tilde u(\cdot ,\tau_i) $ converges to $f_{\ld_0} $ uniformly  on each compact subset  of $\R^n\setminus\{0\}$ as $i\to\infty.$

Since  the sequence $\{\tau_i\}_{i=1}^\infty$ is arbitrary, we deduce  that $\tilde u(\cdot,\tau)$  converges   to $f_{\ld_0}$  uniformly  on each compact subset of $\R^n\setminus\{0\}$ as $\tau\to\infty.$ By \eqref{eq-rescaled-sol-convergence-wL1-new}, 
$$
\lim_{\tau\to\infty}\,\|\tilde u(\cdot, \tau)-  f_{\ld_0}\|_{L^1\left( r^{-\mu_1}  ;\R^n\right)}= 0
$$
which completes the proof of the theorem.
\qed

\begin{remark} 
\begin{enumerate}[(a)]
\item
Under the same assumption as in Theorem \ref{thm-fde-rescaled-asymptotic}, if we restrict ourselves  to the case   
\begin{equation}\label{eq-gamma-restrict}
\max\left( \frac{2}{1-m}, \frac{n}{m+1}\right) < \gamma=\frac{\al}{\be} <  n,
\end{equation}
we can obtain results similar to Theorem \ref{thm-fde-rescaled-asymptotic}  using a  different weighted $L^1$-space.
More precisely, let $$\theta_1:=\frac{\be}{\al} \,\mu_1\quad\mbox{and}\quad\theta_2:=\frac{\be}{\al}\,\mu_2.$$ 
Then    \eqref{eq-gamma-restrict} implies $\theta_1< m.$ For any $\theta\in (0,m]\cap(\theta_1, \theta_2),$ consider  the weighted $L^1$-space with weight $f^{\theta}:=f_{\ld_2}^\theta$ defined by 
\begin{equation*}
L^1(f^\theta; \R^n):=\left\{ h: \int_{\R^n } |h(x)| f^\theta(x) dx<\infty\right\}
\end{equation*}
with  norm $$\|h\|_{L^1(f^\theta; \R^n)}=\int_{\R^n } |h(x)| f^\theta(x) dx.$$
Then $L^1\left(f^{\theta_1}; \R^n\right)$ is a slightly bigger space than  $L^1\left(r^{-\mu_1}; \R^n\right)$  since   by \eqref{eq-asymp-behavior-f-lambdas},
\begin{equation*}
f^{\theta_1}(x)=\left\{    
\begin{aligned}
A_2^{\,\theta_1}|x|^{-\mu_1}(1+o(1))\qquad &\mbox{as $|x|\to0$, }\\
 \overline D_{2}^{\,\theta_1}   |x|^{- \frac{n-2}{m}\cdot \frac{\be}{\al}\cdot \mu_1}(1+o(1))\qquad &\mbox{as $|x|\to\infty$. }
\end{aligned}\right.
\end{equation*} 
Replacing $L^1\left(r^{-\mu}; \R^n\right)$ for  $\mu\in  (\mu_1, \mu_2)$,  and \eqref{eq-sol-initial-value-similar-to-U-ld0-new} in Theroem \ref{thm-fde-rescaled-asymptotic} by  $L^1\left(f^\theta; \R^n \right)$ for  $\theta\in (0,m]\cap(\theta_1, \theta_2)$ and  
\begin{equation*}
u_0- A_{0} |x|^{-\gamma}\in L^1\left(f^\theta; \R^n \right)
\end{equation*}
one can  deduce  that the rescaled function $\tilde u(y,\tau)$ given by \eqref{def-fde-rescaled} converges  to  $f_{\ld_0}$ with $\ld_0:=A_0^{1/(\frac{2}{1-m}-\gamma)}$, as $\tau\to\infty,$ uniformly on  every compact subset of $\R^n\setminus\{0\} $, and in $L^1\left( f^{\theta_1}; \R^n\right)$ by using similar arguments as the proof of Theorem \ref{thm-fde-rescaled-asymptotic}. In fact  \eqref{eq-gamma-restrict} which implies  that  $\theta_1< m$ is needed in the proof of the corresponding strong contraction principle with weight $ f^{\theta_1}$ for rescaled solutions. More specifically,  for any $\tau>0,$      
\begin{align*}
&\tilde a(y,\tau) \La f^{\theta_1}-\beta y\cdot \D f^{\theta_1}+(\al-n\beta)f^{\theta_1}\\
\leq&m f^{m-1} \La f^{\theta_1}-\beta y\cdot \D f^{\theta_1}+(\al-n\beta)f^{\theta_1} \\
\leq &m f^{m-1} \frac{{\theta_1}}{m} f^{{\theta_1}-m}\La f^m-\beta y\cdot \D f^{\theta_1}+(\al-n\beta)f^{\theta_1} \\
=&f^{{\theta_1}-1}\left\{  -2{\theta_1}  \beta y\cdot \D f+( -{\theta_1}\al +\al-n\beta)f \right\}\\
<&f^{{\theta_1}}\left\{  2{\theta_1} \al +(-{\theta_1}\al +\al-n\beta)\right\}
=0\qquad\qquad\mbox{in $\R^n\setminus\{0\}$}
\end{align*}
by  \eqref{eq-hess-f^m} where $\tilde a(y,\tau)$ is given by \eqref{eq-a-lower-bd-new} (cf. \eqref{eq-q-diff-rescaled-integration-xt-hessian-minus-sign-new}). 
 
\item 
If $\frac{n-2}{n+2}\leq m<\frac{n-2}{n},$  then   $  \frac{2}{1-m}\geq \frac{n}{m+1}$   and hence    
\eqref{eq-gamma-restrict}     holds for $ \gamma=\frac{\al}{\be} \in\left(\frac{2}{1-m}, n\right)$.
 
\end{enumerate}
\end{remark}


\end{document}